\newcommand{\eps}{\varepsilon}
\newcommand*{\prob}[1]{\mathbb P(#1)}
\newcommand*{\probb}[1]{\mathbb P\bigl(#1\bigr)}
\newcommand*{\mean}[1]{\mathbb E(#1)}
\newcommand{\ee}{\mathrm{e}}
\newcommand*{\floorfrac}[2]{\genfrac{\lfloor}{\rfloor}{}{}{#1}{#2}}
\newcommand*{\floor}[1]{\lfloor #1\rfloor}
\newcommand*{\ceil}[1]{\lceil #1\rceil}
\newcommand{\ie}{i.e.\ }
\newcommand{\Po}{\operatorname{Po}}
\newcommand{\Var}{\operatorname{Var}}
\newtheorem{thm}{Theorem}[section]
\newtheorem{lem}[thm]{Lemma}
\newtheorem{prop}[thm]{Proposition}
\newtheorem{cor}[thm]{Corollary}
\theoremstyle{remark}
\newtheorem*{rem}{Remark}
\theoremstyle{definition}
\newtheorem{defn}[thm]{Definition}
\title{Dissipative particle systems on expanders}
\author{
John Haslegrave\thanks{School of Mathematical Sciences, Lancaster University, UK.} \and 
Peter Keevash\thanks{
Mathematical Institute, University of Oxford, UK.}
}
\begin{document}
\maketitle
\begin{abstract}
We consider a general framework for multi-type interacting particle systems on graphs,
where particles move one at a time by random walk steps, 
different types may have different speeds,
and may interact, possibly randomly, when they meet.
We study the equilibrium time of the process, by which we mean
the number of steps taken until no further interactions can occur.
Under a rather general framework, 
we obtain high probability upper and lower bounds
on the equilibrium time that match up to a constant factor and are of order $n\log n$
if there are order $n$ vertices and particles.
We also obtain similar results for the balanced two-type annihilation model of chemical reactions;
here, the balanced case (equal density of types) does not fit into our general framework
and makes the analysis considerably more difficult.
Our models do not admit any exact solution as for integrable systems
or the duality approach available for some other particle systems,
so we develop a variety of combinatorial tools for comparing
processes in the absence of monotonicity.
\end{abstract}

\section{Introduction}

Interacting particle systems have been intensively studied since the 1970s
to model a variety of phenomena in Statistical Physics and Mathematical Biology,
such as spin systems, chemical reactions, population dynamics and the spread of infections.
A precise analysis of such systems in generality seems far out of reach,
although there is a large theory for some classical special models that are more tractable, 
due to admitting a variety of techniques based on monotonicity or duality
(see \cite{liggett1985interacting, liggett1999stochastic})
or even exact methods based on the rich theory 
of Integrable Systems, Random Matrices and KPZ Universality (see \cite{deift2014random}).

On the other hand, there are natural classical models that do not admit such techniques and
for which the theory is much less developed. An important example is the two-type annihilation model
for chemical reactions, studied classically on integer lattices $\mathbb{Z}^d$, 
where a celebrated series of papers by Bramson and Lebowitz \cite{BL90,BL91,BL01}
gives order of magnitude estimates for the site occupancy probability.
In this model, there are two types of particles and interactions only occur between
two particles of different type, which makes the analysis particularly difficult.

A single type model introduced by Erd\H{o}s and Ney \cite{EN74} is easier to analyse,
although even here it was an open problem whether the origin would be occupied infinitely often,
solved in one dimension by Lootgieter \cite{Loo77} and in higher dimensions by Arratia \cite{Arr81,Arr83}. 
There is also a substantial physics literature starting from \cite{EF85,KRL95} on annihilation with ballistic motion
which still has many open problems, such as the Bullet Problem (see e.g.~\cite{dygert2019bullet}).
The analysis of Bramson and Lebowitz is very model specific, 
relying on the lattice structure and both particle types moving at the same speed.
Less is known if we move away from these assumptions, although Cabezas, Rolla and Sidoravicius \cite{CRS18} 
showed even if the speeds differ that the system remains site recurrent on any infinite generously transitive graph.

Considering the real-world motivations for interacting particle systems, it is natural to consider systems
on finite graphs and ask for the asymptotics of key parameters when the number $n$ of vertices is large.
Here there is also a large literature (discussed in more detail below), 
dating at least to the early 80s, when Donnelly and Welsh \cite{DW83}
studied coalescing particles on finite graphs via duality with the voter model. 

\subsection{Balanced two-type annihilation}\label{sec:bal-intro}

Before introducing our general models, 
we will discuss the following two-type annihilation model for finite graphs,
recently considered by Cristali, Jiang, Junge, Kassem, Sivakoff and York \cite{2-type}.
Fix a graph with $n$ vertices and initialise with at most one particle at each vertex,
so that there are equal numbers of red and blue particles and at most one vertex is unoccupied;
we refer to this setting as \emph{balanced}. At each time step, 
with probability $p$ a red particle is chosen, uniformly at random from the remaining red particles,
or otherwise a uniformly random blue particle is chosen.
Without loss of generality $0\leq p\leq 1/2$, i.e.\ blue moves at least as fast as red.
The chosen particle performs a simple random walk step. 
If it reaches a vertex with one or more particles of the opposite colour, 
it mutually annihilates with one such particle. This process almost surely 
eventually terminates with no particles remaining.
The key quantity of interest is the time taken for this to happen,
which we call the \emph{extinction time}.

We will see below how the analysis of unbalanced annihilation 
can be subsumed in that of a much more general class of models (see Corollary \ref{cor:three-four-five}). 
However, the analysis of balanced annihilation is much harder  
due to the total number of particles changing dramatically over time.
Nevertheless, we are able to prove similar results for this model,
determining with high probability the extinction time up to constant factors,
for any initial particle distribution on any regular graph with sufficient spectral expansion
(for an introduction to expanders and their applications, see \cite{hoory2006expander}). 

\begin{thm}\label{thm:main}
Let $G$ be a regular graph on $n$ vertices with spectral gap at least $0.425$. 
Consider balanced two-type annihilation on $G$ 
from any starting configuration and let $T$ be the extinction time.
Then $cn\log n\leq T\leq Cn\log n$ with high probability and in expectation, 
where $c$ and $C$ are absolute constants.
\end{thm}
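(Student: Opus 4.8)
The plan is to reduce everything to a single per-level estimate and then sum over levels. Since each interaction annihilates one red and one blue particle simultaneously, the difference $R_t-B_t$ between the numbers of red and blue particles is conserved, and balancedness gives $R_t=B_t=:k_t$ throughout; in particular the process is essentially controlled by the single decreasing integer $k_t$, which starts at $k_0=\floor{n/2}$ and decreases by one at each annihilation until it reaches $0$. Writing $\tau_k$ for the number of steps during which exactly $k$ particles of each colour are present, we have $T=\sum_{k=1}^{k_0}\tau_k$, so it suffices to prove that, uniformly over the configuration at the start of the level, $\tau_k$ is of order $n/k$ both in expectation and with high probability. Summing the heuristic $\sum_{k=1}^{k_0} n/k=\Theta(n\log n)$ then yields the claimed order, with $c,C$ emerging from the constants in the per-level bounds.

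For the upper bound on $\tau_k$ I would work only with the blue moves, which occur with probability $1-p\ge 1/2$ at each step, and regard the $k$ red-occupied vertices as a target set $S$ of size $k$ (that the reds may also move only helps). The point is that the $k$ blue walkers must collectively hit $S$, and I would show this happens within $O(n/k)$ steps from \emph{any} configuration. A single blue walker hits a set of stationary measure $k/n$ within $O(n/k)$ of its own steps on a graph whose spectral gap is bounded below, but since each blue is activated only about once per $2k$ steps this alone gives the useless bound $O(n)$; the gain to $O(n/k)$ comes from the collective effect of all $k$ walkers sharing the step budget, so that the union of their trajectories enters $S$ after $O(n/k)$ total steps. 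Making this rigorous is the main obstacle: the walkers are not independent (exactly one moves per step), the target $S$ itself drifts, and annihilation destroys monotonicity, so a naive domination fails. I would control this by coupling the activated walks to independent lazy random walks and estimating the first collision via hitting-time and mixing bounds that use the spectral gap $0.425$ quantitatively, together with a second-moment argument to rule out that the blue walkers conspire to avoid $S$; the segregated configurations (reds clustered away from blues), which defeat a static expander-mixing estimate, are exactly the configurations this dynamic argument must handle.

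For the lower bound on $\tau_k$ I would instead bound the per-step annihilation probability from above by $O(k/n)$, so that each level persists for $\Omega(n/k)$ steps. The probability that a given blue move annihilates equals $e(B,S)/(dk)$, the density of edges between the blue- and red-occupied sets; this is $O(k/n)$ on average, but a purely static bound fails because locally dense configurations (a blue vertex all of whose neighbours are red) can momentarily force a high rate. I would argue that such configurations cannot sustain a high annihilation rate over a whole window, since each annihilation alters the configuration and exactly one annihilation occurs at level $k$; combined with the trivial observation that the final pair requires $\Omega(n)$ steps to meet, which anchors the small-$k$ tail, this gives $\tau_k=\Omega(n/k)$ with high probability.

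Finally I would assemble the levels. The $\tau_k$ behave like geometric waiting times with success probability $\Theta(k/n)$, so across the $O(\log n)$ dyadic scales of $k$ the sum $\sum_k\tau_k$ concentrates around its mean $\Theta(n\log n)$ by a Chernoff/martingale argument applied scale by scale, upgrading the per-level expectation bounds to a high-probability statement for $T$; integrating the resulting tail bounds yields the matching statement in expectation. The crux throughout is that, because $k\le n/2$ while the spectral gap forces only $\lambda\le 0.575$, the main term of the expander mixing lemma never dominates its error term, so the collision rate $\Theta(k/n)$ cannot be read off a static count and must instead be extracted from the random-walk dynamics of the interacting, non-monotone system.
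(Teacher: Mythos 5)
Your proof rests entirely on the central claim that, \emph{uniformly over the configuration at the start of the level}, the level time satisfies $\tau_k=\Theta(n/k)$ with high probability and in expectation, and this claim is false in both directions; its failure is precisely the difficulty that forces the paper's quite different structure. For the lower bound, take $G$ to be a constant-degree bipartite expander (the hypothesis constrains only $\mu_2$ of the lazy walk, so bipartiteness is not excluded --- consistent with your own remark that the expander-mixing main term never dominates) and consider a configuration of $k$ adjacent blue--red pairs at an intermediate scale, say $k=n^{0.6}$; note also that blues on one side and reds on the other is a legal \emph{starting} configuration of the same kind. Then at each step the mover is blue with probability $1-p\ge 1/2$ and lands on its red partner with probability at least $1/d$, so $\mean{\tau_k}=O(d)=O(1)$, not $\Omega(n/k)$. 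Your proposed repair --- that each annihilation ``alters the configuration'' --- does not help: after an annihilation the surviving pairs are still adjacent, so the particle count falls from $k$ to $(1-\Omega(1/d))k$ in $O(dk)=o(n)$ steps whp, whereas your accounting demands $\Omega(n)$ steps for that range of levels. This shows that no per-level (or even per-scale) \emph{whp} lower bound can hold uniformly; restricting to ``configurations that actually arise'' begs the question, since controlling which configurations arise is the hard part. The paper's key step is correspondingly weaker and differently shaped: from an \emph{arbitrary} configuration of $k$ particles with $n^x\le k\le n^{1-x}$, with \emph{constant} probability $c_1$ it takes $c_2n$ steps to fall below $c_3k$ particles, and the whp statement is recovered only by aggregating over the $\Theta(\log n)$ scales. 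Proving that constant-probability statement is the bulk of the work: a constant fraction of particles must escape the occupied set without returning (Lemma \ref{lem:no-return}), without colliding before escaping (trajectory reversal, Lemmas \ref{lem:meeting-vs-return} and \ref{lem:mr2}), and despite several particles per site (the max-load bound of Lemma \ref{lem:poisson2} combined with Reimer's inequality \cite{Rei00}), with very slow reds ($p\le n^{-2/3}$) needing a separate argument (Lemma \ref{lem:slowreds}). Your ``anchor'' at small $k$ has the same flaw: the last pair meets within $\eps n$ steps with probability $\Theta(\eps)$, so it too gives only a constant-probability, not whp, bound.

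The upper-bound half fails for the dual reason. On the same graph place $k=n/4$ blues and $k$ reds on disjoint subsets of one side of the bipartition, so there are no blue--red edges at all. An annihilation then needs two coordinated moves, and since after $s$ moves at most $s(d+1)$ particles can be adjacent to an opposite colour, $\prob{\tau_k\le t}\le t^2(d+1)/(2k)$, whence $\mean{\tau_{n/4}}=\Omega(\sqrt{n})$ rather than the claimed $O(n/k)=O(1)$. More structurally, your ``collective hitting in $O(n/k)$ total steps'' is only available once the blues are well spread, and you cannot pay a fresh $\Theta(k\log n)$ mixing cost at every level --- that sums to $\Theta(n^2\log n)$. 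The paper amortises instead: one mixing phase per \emph{round} in which a quarter of the remaining particles are destroyed, using Hall's theorem (Theorem \ref{dhall}) together with the Moving Target Lemma (Lemma \ref{lem:hitting}) to produce $k/2$ disjoint meeting pairs of coupled non-interacting particles (Lemma \ref{lem:halls}), which Lemma \ref{lem:non-interacting} converts into true annihilations; the per-round mixing costs then sum geometrically to $O(n\log n)$. Finally, ``that the reds may also move only helps'' is an unjustified monotonicity assertion in a system the paper stresses is non-monotone, and the slow reds are a genuine obstruction: the paper needs three separate regimes (Section \ref{sec:any-p}) and the max-load Lemma \ref{lem:poisson2} exactly because the reds may never have time to mix. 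In short, your outer shell (dyadic scales, Chernoff aggregation) matches the paper's, but the per-level uniform estimate it stands on is unavailable, and supplying its replacement is the actual content of the proof.
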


We make the following remarks on Theorem \ref{thm:main}:
\begin{enumerate}[noitemsep]
\item The upper bound on $T$ holds for any regular expander sequence 
(i.e.~regular graphs $G$ with spectral gap uniformly bounded away from zero).
\item The spectral gap condition holds with high probability for a random $d$-regular graph for any $d\geq 177$.
\item The high probability bound is sufficient to imply that $\mean{T} = \Theta(n\log n)$.
\end{enumerate}
The corresponding upper bound for unbalanced two-type annihilation, as one of a much more general class of models, is provided by Theorem \ref{thm:persistent}. However, this does not cover the case of balanced two-type annihilation, since the proof of Theorem \ref{thm:persistent} depends on the fact that in the unbalanced case (and more generally in the class of models covered), $\Theta(n)$ particles survive throughout the process.

The special case $p=0$ (stationary red particles) is equivalent to the particle-hole model 
of Cabezas, Rolla and Sidoravicius \cite{CRS14}, which they show 
has essentially the same behaviour as a system of activated random walks with infinite sleep rate. 
Here our lower bound applies for any regular graph (regardless of expansion)
assuming a uniformly random starting state.

\begin{thm}\label{thm:p=0}
Let $G$ be a regular graph on $n$ vertices.
Consider balanced two-type annihilation with stationary reds 
from a uniformly random starting configuration and let $T$ be the extinction time.
Then $\mean{T}\geq 0.08n\log n$ for $n$ sufficiently large, and with high probability $T>0.04n\log n$.
\end{thm}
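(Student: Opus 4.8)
The plan is to obtain the lower bound from a coupon-collector effect: since red particles never move, a red at a vertex $v$ is annihilated exactly when a blue first steps onto $v$, so it suffices to show that with high probability some initially-red vertex is never visited by a blue during the first $t:=\alpha n\log n$ steps, for a suitable small constant $\alpha$. Writing $m=\floor{n/2}$ for the initial number of reds, $\tau_v$ for the first time a blue lands on $v$, and $Z_t$ for the number of initially-red $v$ with $\tau_v>t$, the event $\{T>t\}$ contains $\{Z_t\ge1\}$. I would prove $\mean{Z_t}\to\infty$ (first moment), then $Z_t\ge1$ with high probability (second moment), and finally integrate the resulting tail bound to control $\mean{T}$.

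The heart of the argument is a per-vertex survival estimate $\prob{\tau_v>t}\ge\ee^{-ct/n}$ valid on \emph{any} regular graph. The cleanest route is to pass to the auxiliary \emph{immortal} process, in which blues still annihilate reds but are themselves never removed, so the blues simply perform $m$ random walks (one uniformly chosen blue moves per step), unaffected by the reds. Two facts combine. First, since the walk on a $d$-regular graph has uniform stationary distribution, the uniform random start is preserved in expectation: if $f_s(u)=\mean{\#\text{blues at }u\text{ after }s\text{ steps}}$ then $f_0\equiv m/n$, and a one-line computation gives $f_s\equiv m/n$ for all $s$; hence, writing $h_s\le1/d$ for the conditional probability that step $s$ lands on $v$, we get $\mean{\sum_{s\le t}h_s}=t/n$. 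Second, $\prob{\tau_v>t}=\mean{\prod_{s\le t}(1-h_s)}\ge\mean{\ee^{-c\sum_{s\le t}h_s}}\ge\ee^{-ct/n}$, where the first inequality uses $1-h\ge\ee^{-ch}$ for $h\le1/d$ (so $c\le d/(d-1)$) and the second is Jensen applied to the convex function $\ee^{-cx}$; the same bound holds conditional on $v$ being red initially. Summing over $v$ gives $\mean{Z_t}\ge m\,\ee^{-ct/n}=\Omega(n^{1-c\alpha})$, which tends to infinity once $\alpha<1/c$.

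For the high-probability statement I would run a second-moment argument on $Z_t$ in the immortal process, so that $\prob{Z_t\ge1}\ge\mean{Z_t}^2/\mean{Z_t^2}$. The variance is dominated by near-diagonal terms: a pair $(v,w)$ contributes non-trivial covariance essentially only when a single blue walk can plausibly hit both, i.e.\ when $v$ and $w$ are close, and there are only $O(n\,d^{L})$ such pairs at distance at most $L$. Bounding their joint survival by the same Jensen device shows $\Var(Z_t)=o(\mean{Z_t}^2)$ once $\alpha$ is below an explicit constant, so $Z_t\ge1$ with high probability. It then remains to transfer this to the true (mortal) process; since reducing the number of blues should only prolong red survival, I expect $Z_t^{\mathrm{real}}\ge Z_t^{\mathrm{imm}}$ under a suitable coupling, giving $T>t$ with high probability. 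Finally $\mean{T}\ge\int_0^{t}\prob{T>s}\,ds\ge(1-o(1))t$ for every $t=\alpha n\log n$ below the threshold, which yields $\mean{T}\ge0.08\,n\log n$ after fixing a concrete value of $\alpha$.

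The main obstacle is the comparison between the real and immortal processes. A naive step-by-step coupling fails because both advance exactly one particle per unit time but from pools of different sizes, so the same number of steps corresponds to different amounts of elapsed blue motion, and there is no monotonicity to exploit directly; this is precisely the kind of comparison-without-monotonicity for which the paper's combinatorial tools are built, and I would invoke those here. A secondary difficulty is controlling the covariances in the second moment on a general, possibly poorly mixing, regular graph, where nearby vertices can be strongly correlated: keeping $\alpha$ small enough that the near-diagonal pairs are negligible is what forces the modest constants $0.04$ and $0.08$, which I would not attempt to optimise.
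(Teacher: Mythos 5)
Your proposal has a genuine gap at its central step: the transfer from the immortal process back to the true dynamics. You flag this yourself, but defer it to ``the paper's combinatorial tools'', and those tools do not exist in the form you need. The paper proves this theorem by an entirely different mechanism: it invokes the abelian property of the particle-hole model (Lemma \ref{lem:abelian}, from \cite{CRS14}), which says that the total number of moves needed to reach equilibrium is the same for \emph{every} legal toppling order. This reduces the lower bound to exhibiting one convenient order --- processing the blues essentially one at a time, each walking until it hits a live red --- whose total length is bounded below by a harmonic sum of hitting times from random starts (Lemma \ref{lem:hit-from-random}), after a two-stage revealing scheme (the sets $S_1$, $S_2$, $U$) manufactures the independence needed to apply that lemma. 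Nothing in the paper yields a coupling giving $Z_t^{\mathrm{real}}\ge Z_t^{\mathrm{imm}}$, and such a domination is genuinely suspect: in the true process the one-move-per-step budget is shared among ever fewer blues, so each surviving blue moves at a strictly higher rate than in the immortal process, and its law is conditioned on the annihilation history; the conditional rate at which a so-far-unvisited red vertex gets hit is therefore not obviously dominated by the immortal rate. The abelian property is precisely the device that sidesteps this intermediate-time, non-monotone comparison (it compares only terminal move counts), and it is absent from your outline, which is why the key step remains open.

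There are also two secondary problems. First, your per-vertex estimate is right in substance but the derivation is invalid: the identity $\prob{\tau_v>t}=\mean{\prod_{s\le t}(1-h_s)}$, with $h_s$ the conditional hit probabilities given the history, does not hold --- the correct chain rule is $\prob{\tau_v>t}=\prod_{s\le t}\probb{A_s^c\mid \tau_v>s-1}$, and conditioning on survival changes the law of the blues, which is exactly what your Jensen step ignores. The estimate can be rescued by conditioning on the movement schedule (given how many moves each blue makes, the walks are conditionally independent, each nearly stationary, so each avoids $v$ with probability at least $1-(k_i+1)/n$, and $\sum_i k_i=t$), or more cleanly by Poissonising as in Section \ref{sec:lonely} of the paper; note that even then the initial positions of distinct blues are dependent (they occupy distinct vertices), which needs handling. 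Second, your variance bound ``only near-diagonal pairs matter'' is unsupported on a general regular graph, where mixing can be arbitrarily slow and survival events at distinct vertices can be strongly positively correlated; the paper avoids any second-moment computation of this kind for this theorem, getting the high-probability statement instead from independent Bernoulli variables attached to its sequential toppling construction.
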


The lower bound in Theorem \ref{thm:p=0} may fail starting from 
a configuration that is adversarial rather than random:
e.g.~if $G$ is a disjoint union of small even connected components
and the types are balanced in each component then $T$ is $\Theta(n)$
in expectation and with high probability. The same example shows that our other results below
for general regular graphs (Theorems \ref{thm:no-meeting} and \ref{thm:predator})
would fail if we considered an adversarial rather than random configuration.

While Theorem \ref{thm:main} considers the worst-case in the setting of \cite{2-type},
it is much easier to analyse the `absolute worst-case', permitting multiple particles at the same vertex 
and maximising over any speeds and starting positions. We obtain the following bounds,
which are optimal up to a small multiplicative constant and valid for any graph $G$.
The key parameter is the `worst-case hitting time' $H_{\max}(G) = \max_{x,y\in V(G)}H_x(y)$,
where the `hitting time' $H_x(y)$ is the expected number of steps taken by a random walk
on $G$ started at $x$ and stopped when it first hits $y$. 

\begin{prop} \label{prop:worst}
Fix any graph $G$. Let $T_k$ be the worst-case expected extinction time 
for two-type annihilation starting from $k$ particles of each type, 
maximised over any speeds and starting positions (which may coincide). 
Then $k H_{\max}(G)\leq T_k<4k H_{\max}(G)$.
\end{prop}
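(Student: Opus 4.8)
The plan is to treat the two bounds separately: the lower bound is achieved by a single explicit worst configuration and is essentially an exact computation, while the upper bound must hold for \emph{every} choice of speeds and positions and is where the real work lies.

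For the lower bound I would exhibit one configuration meeting the bound. Choose $x,y$ with $H_x(y)=H_{\max}(G)$ (so $x\ne y$ unless the bound is trivially $0$), place all $k$ red particles at $y$ and all $k$ blue particles at $x$, and assign red speed $0$, so only blue particles ever move. Since the reds never move and sit at $y$, a blue is annihilated exactly when its own walk first reaches $y$, and the blues do not interact with one another. Hence the total number of steps equals $\sum_{i=1}^k \tau_i$, where $\tau_i$ is the number of steps blue particle $i$ makes before hitting $y$; the random selection rule only interleaves these moves and does not alter any individual trajectory, which is a random walk from $x$ stopped at $y$. As $\mean{\tau_i}=H_x(y)=H_{\max}(G)$, linearity gives expected extinction time exactly $kH_{\max}(G)$, so $T_k\ge kH_{\max}(G)$.

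For the upper bound I would reduce to a single meeting estimate and then sum. The key sub-claim is that for one red and one blue performing random walk steps under \emph{any} schedule of which particle moves when, the expected time until they coincide is less than $4H_{\max}(G)$. To prove this I would use the harmonic identity $H_x(y)=1+\frac1{\deg x}\sum_{x'\sim x}H_{x'}(y)$: when the particle that is about to land on its counterpart moves, the potential $H_{\text{mover}}(\text{other})$ has expected drift exactly $-1$. Since the roles of hunter and target alternate with the schedule, I would symmetrise using the commute time $C(r,b)=H_r(b)+H_b(r)$, whose starting value is at most $2H_{\max}(G)$; the factor $4$ should then emerge as $2\times 2$, the extra factor $2$ paying for the fact that both particles move rather than one. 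A useful consistency check is the equivalent accounting $T_k=\sum_{\text{particles}}(\text{moves made before annihilation})$: the target bound $4kH_{\max}(G)=2k\cdot 2H_{\max}(G)$ corresponds to at most $2H_{\max}(G)$ moves per particle on average, which the lower-bound configuration (blues using $H_{\max}$ moves, reds none) comfortably respects.

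To pass from two particles to $k$ pairs I would write the extinction time as the sum of the $k$ inter-annihilation waiting times and bound each by the two-particle estimate, using that up to any given annihilation every particle simply performs independent random walk steps, so the first coincidence of a fixed tagged red--blue pair dominates the time to the next annihilation. The main obstacle throughout is that the hitting time $H_x(y)$ is harmonic only in its \emph{starting} argument, not its target: when the non-landing particle moves, its displacement induces an uncontrolled ``target-Laplacian'' increment on the hunter's potential, with no sign or bound coming for free. Taming these increments---together with the absence of monotonicity, so that adding spectator particles cannot simply be assumed to speed up the next annihilation---is the crux, and is presumably where the commute-time symmetrisation and a careful telescoping must be combined to yield the clean constant $4$.
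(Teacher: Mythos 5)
Your lower bound is essentially identical to the paper's (stationary reds at $y$, blues at $x$ with $H_x(y)=H_{\max}(G)$, linearity of expectation), and it is correct. The upper bound, however, has two genuine gaps.

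First, your core two-particle claim---that under an \emph{arbitrary} schedule of which particle moves, the expected number of steps until collision is $O(H_{\max}(G))$---is a nontrivial theorem of Coppersmith, Tetali and Winkler, which the paper invokes as a black box in the form $M_S(x,y)\le 2H_{\max}(G)$. Your sketch does not prove it: as you yourself observe, the drift $-1$ argument only controls $H_{\mathrm{mover}}(\mathrm{other})$ on steps when the mover moves, and when the \emph{other} particle moves the target shifts, so neither $H_x(y)$ nor the commute time $H_x(y)+H_y(x)$ has a sign-controlled increment (hitting times are harmonic in the start, not the target, and effective resistance does not help here). Leaving this as ``presumably where the commute-time symmetrisation and a careful telescoping must be combined'' is exactly the unresolved crux; the paper never attempts it, citing \cite{CTW} instead.

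Second, and more seriously, your reduction from $k$ pairs to the two-particle estimate is quantitatively wrong. The two-particle bound controls the expected number of steps \emph{taken by that pair} until they meet, whereas the inter-annihilation waiting times you sum are measured in \emph{global} steps. When $j$ pairs remain, a fixed tagged red--blue pair is selected to move with probability only $1/j$ per global step (the red with probability $p/j$, the blue with probability $(1-p)/j$), so ``the first coincidence of a tagged pair dominates the time to the next annihilation'' yields an expected waiting time of order $jH_{\max}(G)$, and summing over $j=1,\dots,k$ gives $O(k^2H_{\max}(G))$, not $O(kH_{\max}(G))$. The paper's proof avoids this with a charging scheme that your sketch is missing: fix an arbitrary partition of all $2k$ particles into $k$ red--blue pairs, charge every global step to the pair containing the particle that moves, and bound each pair's total charge until one of its members is destroyed by the adversarial meeting time $\le 2H_{\max}(G)$ (the speed-based random schedule is one admissible strategy $S$). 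When an annihilation destroys particles from two \emph{different} pairs, the two widowed particles are re-paired; each such re-pairing consumes two particles, so at most $k-1$ new pairs are ever created, hence at most $2k-1$ pairs in total, giving $T_k<(2k-1)\cdot 2H_{\max}(G)<4kH_{\max}(G)$. Without this re-pairing/charging device, the constant-in-$k$ factor cannot be recovered from your decomposition.
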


\subsection{Dissipative particle systems} \label{subsec:dissipative}

Now we will consider a broad class of models for processes with various types of particles 
moving diffusively and interacting in a possibly random way that is a function of their types. 
Our models will be \emph{dissipative}, meaning that each type of particle has an associated \emph{energy} 
(some positive real number) and that the total energy of all particles is strictly decreased
by any \emph{effective reaction} (\ie reaction where the output differs from the input);
for example, annihilation models are dissipative (for any energies). This condition gives
quite a different flavour from those models studying population dynamics with reinforcement
(see the survey \cite{Pem07} and \cite{AF22} for a recent result combining annihilation and growth).
It is also perhaps similar in spirit to the Dissipative Particle Dynamics (DPD) models
used for practical simulations of hydrodynamic phenomena (see \cite{EW17}).

Our models are specified by giving for each `input' pair $(A,B)$ of types 
a probability distribution $\pi_{AB}$ on `output' sets $S$ of types
(here $\pi_{AB}$ is fixed, independently of $n$).
If a particle of type $A$ arrives at a vertex and `meets' a particle of type $B$
then the process removes both particles and adds a new particle 
of each type in $S \sim \pi_{AB}$ independently of all other randomness.
(One could distinguish the ordered pairs $(A,B)$ and $(B,A)$,
but for simplicity we will assume $\pi_{AB}=\pi_{BA}$.)
For example, for two-type annihilation  these `distributions' 
are supported on a single output for each input:
the output for $\{A,B\}$ is $\varnothing$, for $(A,A)$ is $\{A,A\}$ and for $(B,B)$ is $\{B,B\}$. 
When a particle arrives at a vertex with one or more other particles already present, 
it is considered to \textit{meet} each such particle in a random order
until either an effective reaction occurs or all reactions are ineffective.
(This seems a natural model for such multiple collisions, but one could consider other options.)

We may reach an imbalance of types even starting from a balanced position,
and it is anyway often natural to start from a position of imbalance 
(e.g.~as considered for annihilating particles by Bramson and Lebowitz \cite{BL90}).
To allow for imbalance in our model, we assign speeds to types, 
such that the sum over types of their speeds is $1$,
and in each round of the process we select the particle 
to move with probability proportional to its speed.
For example, for two-type annihilation as above
we assign speeds $p$ to `red' and $1-p$ to `blue'.
Typically, we think of fixing the initial number of each type of particle in advance,
so that there are $n$ particles in total, and of the distribution of these particles
as chosen either randomly or adversarially. For our general models 
we consider the speeds to be fixed independently of $n$,
although we note that in our above results on two-type annihilation
we allow $p$ to depend on $n$.

Our key quantity of interest for these models is the time taken until no further reaction is possible,
which we call the \emph{equilibrium time}. For balanced two-type annihilation this is the extinction time,
for which we obtain $\Theta(n \log n)$ bounds, as stated above. 
We will prove the same bounds on the equilibrium time of general dissipative models, 
under an additional assumption regarding `active agents', 
to be described using the following definitions.

We say that a type is \textit{persistent}, for a given set of reactions and initial densities of the types, 
if there is some $\eps>0$ such that the density of that type must be at least $\eps$ 
throughout any possible sequence of reactions in a mean-field setting
(meaning that we allow any two particles to meet at each step, ignoring the graph $G$).
Similarly, we say that the particle system is \textit{persistent} if there is some $\eps>0$ such that 
the total density of particles must remain at least $\eps$ in a mean-field setting.
(This is a strictly weaker condition than having a persistent type.)
A type is \emph{ephemeral} if it is not persistent.
For example, in two-type annihilation, if the initial densities of red and blue differ
by some $\eps>0$ then the denser type is persistent and the other is ephemeral,
whereas in balanced two-type annihilation both types are ephemeral.

We say that the particle system is \emph{agential} 
if there are no effective reactions between two ephemeral types.
The interpretation of this terminology is that in some settings 
we think of particles of persistent types as `active agents'
and those of ephemeral types as `passive reagents'.
Note that by definition no reaction between persistent types is possible,
since if it were we could invoke it repeatedly until one type was eliminated, 
contradicting persistence. Thus in an agential system 
every reaction involves one persistent type and one ephemeral type.

Our general upper bound is as follows.

\begin{thm}\label{thm:persistent}
Let $G$ be a graph on $n$ vertices from a regular expander sequence.
Consider any dissipative agential particle system on $G$
starting from any configuration with at most $n$ particles.
Then the equilibrium time is $O(n\log n)$
with high probability and in expectation.
\end{thm}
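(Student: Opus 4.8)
The plan is to combine two essentially independent estimates: a \emph{combinatorial} bound showing that only $O(n)$ reactions ever occur and that they cannot be forced into long serial chains, and a \emph{probabilistic} bound showing that, on an expander, any particle that can still react does so within $O(n)$ steps with an exponentially decaying tail. First I would record the basic bookkeeping. Since each persistent (agent) type retains density at least $\eps$ in every mean-field reaction sequence, and the graph process realises one such sequence, there are always at least $\eps n$ agent particles present; since the system is agential, a reaction is possible precisely when some \emph{reactive} ephemeral particle (one whose type reacts with a persistent type that is present) coexists with an agent, so on a connected graph the equilibrium time is $T=\min\{t:R_t=0\}$, where $R_t$ is the number of reactive ephemeral particles. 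This state is absorbing, so $\prob{T>t}=\prob{R_t\ge 1}$. Dissipativity bounds the total energy by $O(n)$ initially and forces each effective reaction to release at least a fixed $\delta>0$, so at most $O(n)$ reactions occur, and since each produces $O(1)$ particles, at most $O(n)$ particles are ever created.

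The crucial structural input is that reactive particles spawn only \emph{bounded} cascades. Define a digraph on ephemeral types with an edge $E\to E'$ whenever some reaction $\{E,A\}\to S$ with $A$ persistent has a reactive $E'\in S$. I claim this digraph is acyclic. Indeed, a directed cycle could be fired repeatedly in the mean field on a single tagged lineage; summing energies around the cycle, the ephemeral inputs and outputs cancel, so dissipativity forces at least one agent type to be net-consumed on each traversal, and repeated firing would then drive that (persistent) type's density to $0$, contradicting persistence. Acyclicity bounds the cascade depth by the number of types, $\ell_0=O(1)$, so the descendants of any single reactive particle form a tree of depth at most $\ell_0$ and size $O(1)$.

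The main obstacle is the probabilistic estimate. I would show that a tagged reactive ephemeral particle $P$ reacts within any window of $Cn$ steps with probability bounded below by an absolute constant, uniformly over the past, as long as at least $\eps n$ agents are present. The heuristic is clear: over $\Theta(n)$ steps each particle takes $\Theta(1)$ walk steps, the spectral gap mixes these in $O(\log n)\ll n$ time so that agents occupy a constant fraction of the vertices near $P$, and hence either a move of $P$ onto an agent or an agent's move onto $P$ occurs with constant probability. Making this rigorous is delicate because only one particle moves per step, the movers are chosen by type-dependent speeds, several particles may share a vertex, and the configuration is far from independent; the spectral-gap hypothesis must be used to control the relevant conditional hitting probabilities (say by comparison with independent random walks, or a spectral second-moment computation), and this is exactly where $H_{\max}(G)=\Theta(n)$ enters. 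Granting this, survival of $P$ for $s$ steps has probability at most $\ee^{-\Omega(s/n)}$.

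Finally I would combine the pieces. By the structural lemma the reactive particles partition into at most $R_0\le n$ ancestral trees, each of depth at most $\ell_0$ and size $O(1)$, where a particle of generation $j$ is born when its parent reacts. If $T>t$ then some tree is unresolved, so along the ancestral line to a surviving reactive particle the at most $\ell_0+1$ successive survival times sum to more than $t$, whence one of them exceeds $t/(\ell_0+1)$. A union bound over the $O(n)$ particles ever created, using the survival tail, then gives $\prob{T>t}\le O(n)\,\ee^{-\Omega(t/n)}$, which tends to $0$ once $t=Cn\log n$ for a large enough absolute constant $C$; integrating the same tail bound yields $\mean{T}=O(n\log n)$.
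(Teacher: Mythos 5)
Your structural lemma (acyclicity of the ephemeral-production digraph) is sound and is essentially the paper's ephemeral-ordering lemma, and your final union bound would go through \emph{if} your probabilistic estimate held. But that estimate --- that a tagged reactive particle reacts within any window of $Cn$ steps with probability bounded below by an absolute constant, \emph{uniformly over the past}, whenever $\eps n$ agents are present --- is false, and it is precisely where the whole difficulty of the theorem lives. Take a constant-degree regular expander, place all $\eps n$ agents on a single vertex $v$, and place the tagged ephemeral particle $P$ at a vertex $u$ with $\mathrm{dist}(u,v)=\Omega(\log n)$ (such $u$ exists since balls of radius $o(\log n)$ have $o(n)$ vertices). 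In $Cn$ total steps each individual particle moves only $O(C)$ times in expectation, with a Binomial tail, so the probability that the moves of $P$ plus the moves of any single agent total $\Omega(\log n)$ is $n^{-\omega(1)}$; hence with probability $1-o(1)$ no reaction involving $P$ occurs in that window. Your heuristic conflates two time scales: mixing needs $\Theta(\log n)$ walk steps \emph{per particle}, which is $\Theta(n\log n)$ steps of the process, not $\Theta(n)$. Since the theorem permits arbitrary (adversarial) initial configurations with multiple particles per site, this is not a pathological case one can wave away, and conditioning on the past can recreate exactly this kind of concentration.

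Repairing this requires the machinery you deferred with ``granting this.'' One needs (i) a dedicated mixing phase of $\Theta(n\log n)$ steps; (ii) an argument that the \emph{agents} are spread over $\Omega(n)$ vertices at all later times --- which cannot be asserted for the true (interacting) agents directly, since their positions are correlated through the reactions; the paper couples to non-interacting ``fake'' particles, uses persistence plus dissipativity to keep $\Omega(n)$ of them paired with true agents, and proves an anti-concentration claim for the independent fake walkers; and (iii) a conditional hitting bound for a moving target set that survives the conditioning on all previous misses --- the paper's Moving Target Lemma, whose proof controls the growth of the $L^2$ distance to stationarity under this conditioning. There is also a quantitative gap even after repair: if you only get \emph{constant} success probability per window, and the window must be $\Theta(n\log n)$ long, the union bound over $O(n)$ particles yields $O(n\log^2 n)$, not $O(n\log n)$. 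To remove the extra $\log$ you need polynomially small failure probability per particle within a single $O(n\log n)$ phase, which is exactly the exponential decay (in the number of hitting attempts) that the Moving Target Lemma provides; worst-case hitting-time comparisons via $H_{\max}(G)=\Theta(n)$, which you invoke, are too weak for this.
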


The assumptions in Theorem \ref{thm:persistent} are all `somewhat necessary', for the following reasons.
\begin{enumerate}[noitemsep]
\item If we allowed the speeds of types to depend on $n$ then collisions involving two types of speeds $o(1)$
may be necessary for equilibrium but take too long to occur.
\item Dissipative dynamics rules out reversible models that never reach equilibrium, such as `catalysed modification' 
$A+c\to A+d$ and $B+d\to B+c$ (in our examples we adopt the convention that upper-case letters are persistent types).
\item If we allowed reactions between ephemeral types, one could introduce an inert persistent type 
to an otherwise ephemeral process, with such an overwhelming number of inert particles that reactions are rare.
Less trivially, consider a system such as $A+c\to b$ and $b+b\to c$
where $A$ is dense enough to be persistent. Then the penultimate reaction requires two specific particles of type $b$ to meet, 
and since there are still $\Theta(n)$ other particles remaining, this typically takes time $\Theta(n^2)$.
\end{enumerate}

For a matching lower bound, one should clearly also assume that $\Theta(n)$ reactions are necessary for equilibrium.
Equivalently, one should show that $\Omega(n\log n)$ steps are necessary to eliminate any type with positive initial density.
We believe that this holds for any such systems as above.
We support this belief by proving it when the initial positions of particles 
are independent stationary random vertices and all types have the same speed, 
for any regular graph (not necessarily an expander).
We deduce this from the following result of independent interest
on `lonely walkers' in a system of non-interacting random walks.

\begin{thm}\label{thm:no-meeting} (Lonely Walkers)
Let $G$ be a regular graph on $n$ vertices. 
Consider $n$ independent random walks, starting from stationarity, 
running for time $0.1n\log n$, with one walk moving at each time step. 
Then with high probability there are $\Omega(n^{3/4})$ walks 
that have never met any other walk.
\end{thm}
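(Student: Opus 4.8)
The plan is to run the first and second moment method on the number $L=\sum_{i=1}^n I_i$ of lonely walkers, where $I_i$ indicates that walk $i$ is never co-located with another walk (since only one walk moves per step, every co-location is created either at time $0$ or at the step where one of the two walkers moves onto the other). The structural fact that makes everything go through — and the reason no expansion is needed — is that from stationarity on a \emph{regular} graph each walk is uniformly distributed at every time, each move lands on any prescribed vertex with probability exactly $1/n$, and the $n$ walks are mutually independent. I will condition throughout on the schedule $(J_t)$ recording which (uniformly random) walk moves at each of the $S=0.1n\log n$ steps, and write $N_i$ for the number of steps taken by walk $i$, so that $N_i\sim\mathrm{Bin}(S,1/n)$ concentrates around $0.1\log n$.

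For the first moment I also condition on the whole trajectory of walk $i$; the remaining walks are then independent, so $\prob{I_i\mid\cdots}=\prod_{k\ne i}(1-p_{ik})$, where $p_{ik}$ is the probability that walk $k$ ever meets the trajectory of $i$. A fresh meeting of $i$ and $k$ can occur only at time $0$ or at a step where one of them moves onto the other, each such event having probability $1/n$ by stationarity, so a union bound gives $p_{ik}\le(1+N_i+N_k)/n$ and hence $\sum_{k\ne i}p_{ik}\le 1+N_i+S/n$. Since every $p_{ik}=O(\log n/n)=o(1)$, we have $\prod_{k\ne i}(1-p_{ik})\ge\exp(-\sum_k p_{ik}-o(1))$; on the likely event $N_i\le 0.11\log n$ this is at least $n^{-0.21-o(1)}$, whence $\mean L\ge n^{0.79-o(1)}=\omega(n^{3/4})$. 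The constant $0.1$ enters precisely here: with running time $\alpha n\log n$ the exponent is $1-2\alpha$, and $0.1$ sits safely below the threshold $\alpha=1/8$ at which this would drop to $3/4$.

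For concentration I will bound $\Var(L)$ and apply Chebyshev, so it suffices to show $\sum_{i\ne j}\operatorname{Cov}(I_i,I_j)=o(\mean{L}^2)$. Conditioning on the schedule and on both trajectories, $I_iI_j$ factorises as $\mathbf 1[\text{$i,j$ avoid each other}]\prod_{k\ne i,j}(1-p_{ik}-p_{jk}+q_{ijk})$, where $q_{ijk}$ is the probability that walk $k$ hits \emph{both} trajectories. Since $1-p_{ik}-p_{jk}+q_{ijk}\le(1-p_{ik})(1-p_{jk})+q_{ijk}$ and $p_{ik},p_{jk}=o(1)$, the product is at most $\prod_k(1-p_{ik})(1-p_{jk})\cdot\exp\bigl((1+o(1))\sum_k q_{ijk}\bigr)$, so the whole correlation is governed by the correction $\sum_k q_{ijk}$. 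Bounding $q_{ijk}$ by $\mean{C_{ki}C_{kj}}$, where $C_{ki}$ counts the fresh meetings between $k$ and $i$, and observing that $C_{ki}$ and $C_{kj}$ are conditionally independent given the trajectory of $k$ with conditional means $O(\log n/n)$, gives $\mean{q_{ijk}}=O((\log n/n)^2)$ per triple and hence $\mean{\sum_k q_{ijk}}=O(\log^2 n/n)=o(1)$; this yields $\prob{I_iI_j}\le(1+o(1))\prob{I_i}\prob{I_j}$ on average and closes the variance bound. I expect this last step to be the main obstacle: the correlation is largest exactly for pairs $i,j$ whose trajectories stay close, since then many walks can hit both, yet such pairs are also the ones most likely to meet each other, so the factor $\mathbf 1[\text{avoid}]$ suppresses precisely the dangerous configurations — making this tension quantitative uniformly over all pairs, rather than merely on average, is where the real work lies.
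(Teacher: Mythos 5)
Your first-moment computation is sound, and it is essentially a de-Poissonised version of the paper's: exact stationarity on a regular graph makes each meeting opportunity have probability exactly $1/n$, and conditional independence of the other walks given the schedule and walk $i$'s trajectory gives the product formula, whence $\mean{L}\geq n^{0.79-o(1)}$. The genuine gap is in the variance step, at exactly the point you flag yourself. After your factorisation, the correction to $\prob{I_i}\prob{I_j}$ enters \emph{exponentially}, as $\exp\bigl((1+o(1))Q_{ij}\bigr)$ with $Q_{ij}=\sum_k q_{ijk}$, but you control only the mean $\mean{Q_{ij}}=O(\log^2 n/n)$, averaged over the trajectories of $i$ and $j$. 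Jensen's inequality goes the wrong way: a mean bound on $Q_{ij}$ says nothing about $\mean{(\cdots)\,e^{Q_{ij}}}$, because $Q_{ij}$ has heavy tails, and the inference ``$\mean{\sum_k q_{ijk}}=o(1)$, hence $\prob{I_iI_j}\leq(1+o(1))\prob{I_i}\prob{I_j}$ on average'' is invalid as stated. Concretely, $Q_{ij}$ is comparable to $\sum_k p_{ik}=\Theta(\log n)$ -- so $e^{Q_{ij}}$ is \emph{polynomially} large -- precisely when the trajectories of $i$ and $j$ shadow each other without colliding, an event the avoidance indicator does not kill. For instance, let $G$ be a disjoint union of cliques of size $\log n$ (the theorem allows any regular graph, connected or not, mixing or not): two walkers landing in the same clique avoid each other throughout with probability $\Theta(1)$, and any third walker of that clique which meets one of them meets the other with probability $\Theta(1)$, so $Q_{ij}=\Theta(\log n)$ with probability $\Theta(\log n/n)$. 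Whether the variance bound survives is then a genuine competition between the rarity of shadowing and the polynomial blow-up $e^{Q_{ij}}=n^{\Theta(1)}$, uniformly over all regular graphs; nothing in the proposal quantifies this competition, and it is exactly where the difficulty of the theorem is concentrated.

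For comparison, the paper's proof is organised so that no exponential ever appears. It Poissonises both the particle counts and the move counts, so that the numbers of particles following distinct trajectories are \emph{independent} Poisson variables; conditioning on a lonely walker at $v$ with trajectory $T$ is then implemented by a clean deletion coupling (set $N_T=1$, set $N_{T'}=0$ for trajectories $T'$ meeting $T$, leave everything else unchanged), and the covariance term $\prob{A_u\mid B_{v,T}}-\prob{A_u}$ is bounded \emph{additively} by the expected number of witness pairs $(T',T'')$ ($T'$ from $u$ meeting $T''$, $T''$ meeting $T$), counted via subtrajectory weights and a trajectory-reversal argument, giving the uniform bound $D_{v,T}=O(\log^3 n)$. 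That first-order, coupling-based structure is what sidesteps the tail problem your multiplicative bound runs into; to rescue your route you would need a uniform (not on-average) tail estimate for $Q_{ij}$ against the avoidance event, which is a substantial missing piece rather than a technicality.
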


\begin{rem}
We give explicit constants in Theorem \ref{thm:no-meeting} for concreteness.
The same proof shows that for any $\eps>0$ there is $\delta>0$ 
so that if the walks run for time $\delta n\log n$ then  with high probability 
there are $\Omega(n^{1-\eps})$ walks that have never met any other walk.
\end{rem}

\begin{cor}\label{cor:survival}
Let $G$ be a regular graph on $n$ vertices. 
Consider any persistent particle system on $G$, 
where all types have the same speed,
and the starting locations of particles 
are independent and stationary.
Then with high probability, any type of positive initial density
has not yet been eliminated after $O(n\log n)$ steps.
\end{cor}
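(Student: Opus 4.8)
The plan is to find, with high probability, at least one particle of the positive-density type $A$ that never takes part in any reaction at all, and is therefore still present (and still of type $A$) at the end of the run. Since type $A$ has positive initial density, at least $\delta n$ of the $n$ initial particles are of type $A$ for some fixed $\delta>0$, so it suffices to exhibit a single surviving type-$A$ particle. The mechanism for survival will be \emph{loneliness}: a particle that never shares a vertex with any other particle cannot react, and so persists for the whole run regardless of what the rest of the system does.

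First I would introduce an idealised system consisting of the $n$ initial particles performing independent stationary random walks forever, with no reactions and no removals, selecting one walker uniformly at random to move at each step (uniform selection being exactly what equal speeds give), as in Theorem \ref{thm:no-meeting}. I then couple this with the true interacting process so that each original particle follows its idealised trajectory up to the moment it reacts, running both for $0.1n\log n$ selection steps. The key structural claim is that in the coupled pair every particle present in the true process at any time occupies the current position of some idealised walker; this is maintained by assigning the output particles of each reaction to inherit the continuations of the trajectories of the two reactant walkers. Granting this, if idealised walker $i$ is lonely — it never shares a vertex with any other idealised walker during the run — then the true particle $i$ never shares a vertex with any other true particle, hence never reacts, and in particular an initially type-$A$ lonely walker survives as type $A$. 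Here persistence enters: because the number of live particles stays $\Theta(n)$ throughout, the $0.1n\log n$ selection steps translate (after discarding idle selections of already-removed particles) into $\Theta(n\log n)$ genuine steps of the true process, which is the time scale asserted by the corollary.

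It then remains to produce a lonely walker of type $A$. By Theorem \ref{thm:no-meeting} there are $\Omega(n^{3/4})$ lonely walkers with high probability. Crucially, loneliness in the idealised system is a function of the starting positions and the selection dynamics alone — not of the types — and the walker indices are exchangeable, since the starting positions are i.i.d.\ and the selection is uniform. Hence, conditioned on its size $m$, the (random) set $L$ of lonely walkers is uniformly distributed over the $m$-subsets of $[n]$ and independent of the fixed labelling of which particles are of type $A$. The probability that $L$ misses the set of $\geq\delta n$ type-$A$ indices is therefore at most $\binom{n-\delta n}{m}\big/\binom{n}{m}\leq (1-\delta)^{m}$, which is $(1-\delta)^{\Omega(n^{3/4})}=o(1)$ on the high-probability event $m=\Omega(n^{3/4})$. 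Thus with high probability some lonely walker is of type $A$, and by the previous paragraph it survives, proving the corollary.

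The main obstacle is the coupling in the second paragraph for systems that genuinely create particles, rather than only annihilating them as in the pure two-type model. When a reaction outputs at most two particles one can assign each output to one of the two reactant walkers and keep the true process confined to the idealised walkers' positions, but outputs of larger size require either introducing auxiliary walkers — which must then be shown not to endanger the loneliness of walker $i$ — or a more careful accounting that exploits dissipativity to bound the total number of particles (and hence of such events) by $O(n)$. Matching the uniform-among-live selection of the true process to the uniform-among-$n$ selection of Theorem \ref{thm:no-meeting} is the other point needing care, and this is exactly where the persistence hypothesis, guaranteeing a $\Theta(n)$ live population throughout, does the work.
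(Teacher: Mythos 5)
Your proposal is correct and takes essentially the same route as the paper's proof: couple the true process to the non-interacting walker system of Theorem \ref{thm:no-meeting}, use persistence to turn the $0.1n\log n$ selection steps into $\Theta(n\log n)$ genuine steps of the true process, and use the independence of loneliness from the type labelling to conclude that some lonely walker carries type $A$. If anything you are more careful than the paper, whose one-paragraph proof couples ``as in the proof of Theorem \ref{thm:persistent}'' and silently passes over exactly the points you make explicit --- that reaction outputs must inherit the reactant walkers' trajectories so that every true particle stays at some walker's position (otherwise an untracked particle could destroy the lonely walker's partner), and the caveat about reactions with more than two outputs that you flag at the end.
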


Our results apply to a rather broad class of models,
whereas much of the previous related literature considers
deterministic interactions for particles of one or two types;
here one can explicitly list the small number of possible models,
each of which has been studied in its own right.
We will discuss the implications of our results 
for these models in Section \ref{sec:one-two}.

\subsection{Simultaneous movement and the Big Bang}

Even for the most well-understood model of coalescence,
obtaining a precise understanding of the early evolution 
is widely considered a very difficult problem;
this has been dubbed the ``Big Bang regime'' by Durrett (see e.g.\ \cite{Dur10}).
A recent breakthrough on this problem for coalescing particles
on constant-degree random regular graphs
was achieved by Hermon, Li, Yao and Zhang \cite{big-bang};
comparable results were only previously known on the integer lattice \cite{BG80}, 
where transience plays a key role. For other models, such as annihilation,
comparable results are still unknown.

Much of the previous literature on interacting particles
considers models where particles move simultaneously,
rather than individually as in our models defined above.
Considering simultaneous movements side-steps the Big Bang question,
as then the Big Bang is so fast that it is insignificant compared to the equilibrium time,
which is dominated by the slower later stages.
By contrast, as our models have individual moments we cannot ignore the Big Bang:
we need to consider all particles, whereas in synchronous models 
only the longest-surviving particle matters. 
See also \cite{RSSS19} for a related model of internal diffusion-limited aggregation (IDLA),
where results are obtained for simultaneous movements, 
but the problem for individual movements remains open.
This distinction is only significant for ephemeral models, 
such as coalescence or balanced two-type annihilation,
as for persistent models the two settings are essentially equivalent:
the equilibrium times are related by a $\Theta(n)$ factor.

Consequently, while our methods are able to deal with both settings, 
we state our results for the more difficult case of individual movements.
Another minor advantage is that this lends itself more naturally 
to considering types of different speeds (although these could be implemented
in a simultaneous model by moving each particle independently with probability equal to its speed).
Although a speed differential is to be expected in almost all applications 
(e.g.~owing to different sizes of reacting molecules or enzymes and substrates), 
most previous work does not incorporate this factor, perhaps due to the focus on simultaneous movement. 

\subsection{Special models}\label{sec:one-two}

Now we will specialise to models with one or two types and a single deterministic reaction which does not increase the total number of particles. For the two-type case we also assume that the reaction occurs between the two types (which we call $A$ and $B$), since otherwise we would have a one-type system with additional inert particles. While it would be natural to permit multiple reactions, allowing a combination of one-type and two-type reactions, this would significantly increase the number of possibilities. Since we are not aware of previous work covering any such models, we do not consider them separately here.  

The possible `special models' up to relabelling $A \leftrightarrow B$ are as follows.

\begin{itemize}
	\item One type:
	\begin{enumerate}[(i),nosep]
		\item $A+A\to\varnothing$: one-type annihilation.
		\item $A+A\to A$: coalescence.
	\end{enumerate}
	\item Two type:
\begin{enumerate}[(i), resume*]
	\item $A+B\to\varnothing$: two-type annihilation.
	\item $A+B\to A$: predator-prey.
	\item $A+B\to A+A$: infection / communication.
\end{enumerate}
\end{itemize}
Besides the annihilation and coalescence models discussed above,
we also see models for (iv) predators of type A eating prey of type B,
and (v) infected individuals of type A infecting healthy individuals of type B.
The following, which includes the case of unbalanced two-type annihilation as mentioned in Section \ref{sec:bal-intro}, is immediate from Theorem \ref{thm:persistent} and Corollary \ref{cor:survival}.

\begin{cor}\label{cor:three-four-five}
Let $G$ be a graph on $n$ vertices from a regular expander sequence.
Consider any of the special models (i)--(v)
from any initial configuration with at most $n$ particles,
where each type has constant speed and positive initial density.
For model (iii), suppose also that the type densities 
differ by some $\eps>0$ independent of $n$.
Then the equilibrium time is $O(n\log n)$
with high probability and in expectation, and is $\Theta(n\log n)$ if the 
initial positions of particles are independently uniform and types have the same speed.
\end{cor}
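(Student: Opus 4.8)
The plan is to establish the two matching bounds separately, reducing each (for the genuinely two-type models) to one of the cited results after a short model-by-model check of hypotheses. The organising observation is that in models (iii)--(v) exactly one type is \emph{persistent}: predators in (iv) and infected particles in (v) never decrease in number, and in (iii), under the assumed density gap $\eps$, the denser type is persistent; in every case the other type is ephemeral, and by hypothesis both types start with density $\Omega(1)$.

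\emph{Upper bound.} For (iii)--(v) I would first verify that the system is dissipative and agential, so that Theorem~\ref{thm:persistent} applies. Dissipativity is automatic for the annihilation reaction (iii) and holds for (iv) for any positive energies (the reaction destroys the $B$-particle); for (v) one assigns the persistent type $A$ a strictly smaller energy than $B$, so that $A+B\to A+A$ strictly decreases the total energy. Agentiality is immediate: same-type particles are inert by assumption, so the only reaction is the cross-type one, which pairs the persistent type with an ephemeral one, and hence there is no effective reaction between two ephemeral types. Since the persistent type has positive density it is genuinely persistent, and Theorem~\ref{thm:persistent} gives equilibrium time $O(n\log n)$ with high probability and in expectation.

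\emph{Lower bound.} The key point is that for each of (iii)--(v) equilibrium is reached exactly when the ephemeral type has been eliminated: the persistent type is always present, so a reaction remains possible until, and only until, the ephemeral type is gone. It therefore suffices to show the ephemeral type, which has positive initial density, survives for $\Omega(n\log n)$ steps. As all speeds are equal and the system is persistent, Corollary~\ref{cor:survival}, applied to the ephemeral type, gives precisely this under independent stationary starts, so $T=\Omega(n\log n)$ with high probability; since $T\ge 0$ this also yields $\mean{T}=\Omega(n\log n)$. Combined with the upper bound this gives $T=\Theta(n\log n)$ for (iii)--(v).

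\emph{The single-type models and the main obstacle.} Models (i) and (ii) are the genuinely ephemeral cases, with a single type that is neither persistent nor agential, so neither cited result applies directly to them and their treatment needs the machinery behind Theorem~\ref{thm:main}. For the lower bound one can instead invoke Theorem~\ref{thm:no-meeting} (via the same coupling with $n$ independent walks that underlies Corollary~\ref{cor:survival}): any walk that never meets another is never removed, so the $\Omega(n^{3/4})$ lonely walkers present at time $0.1n\log n$ certify that more than one particle survives, whence $T=\Omega(n\log n)$. The matching upper bound is the part I expect to be genuinely delicate: it calls for the ephemeral analysis of Theorem~\ref{thm:main}, either run directly or accessed through a comparison showing that one-type annihilation and coalescence equilibrate no later than balanced two-type annihilation, a comparison that must be carried out in the absence of monotonicity. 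Finally, I would record the caveat, already visible in the remark after Theorem~\ref{thm:p=0}, that the lower bounds genuinely require the random (stationary) starting configuration: for an adversarial start, such as balanced types on small even components, the equilibrium time can drop to $\Theta(n)$.
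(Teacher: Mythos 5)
For models (iii)--(v) your argument is exactly the paper's proof: the paper's entire justification of the corollary is the single sentence that it is immediate from Theorem \ref{thm:persistent} and Corollary \ref{cor:survival}, and your checks of dissipativity (choosing the energy of $A$ below that of $B$ for infection), agentiality, and persistence of the denser/predator/infected type supply precisely what that sentence leaves implicit. Your two criticisms of the statement are also well taken: the lower bound really does inherit the independent-stationary-start hypothesis of Corollary \ref{cor:survival} (the paper's ``any initial configuration'' is loose on this point), and models (i)--(ii) genuinely fall outside both cited results, since a single self-reacting ephemeral type is neither persistent nor agential. For the record, the paper does later prove the upper bound for (i): Theorem \ref{thm:expander} covers one-type annihilation alongside the balanced two-type model, while coalescence (ii) is nowhere explicitly treated and would need an adaptation of that Hall-matching argument.

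However, your substitute lower bound for (i)--(ii) has a concrete gap. Theorem \ref{thm:no-meeting} measures time on the clock of the non-interacting system, in which all $n$ walks persist forever; the equilibrium time $T$ is measured on the clock of the true process, which advances only when a surviving particle moves. In the coupling underlying Corollary \ref{cor:survival}, when $k$ true particles survive the true clock advances at rate about $k/n$ per fake step, and it is exactly persistence ($k=\Theta(n)$ at all times) that makes the two clocks comparable --- this is the step ``both systems have $\Theta(n)$ particles at all times, so the true system takes $\Omega(n\log n)$ steps'' in the paper's proof, which you cannot invoke here. For one-type annihilation from stationarity the population decays roughly like $n\ee^{-2s/n}$ after $s$ true steps, so when the fake clock reads $0.1n\log n$ the true clock reads only $\Theta(n\log\log n)$: your lonely walkers certify non-equilibrium, but only after $\Theta(n\log\log n)$ true steps, not $\Omega(n\log n)$. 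Lower bounding the true clock requires ruling out fast population collapse, which is precisely the content of the paper's separate lower-bound machinery in Section \ref{sec:lower} (Theorem \ref{thm:lower-bound-one} and its two-type extension: mixing phases, return-time estimates, trajectory reversal, Reimer's inequality), and which moreover needs a quantitative spectral gap rather than a bare expander hypothesis. So your instinct that (i)--(ii) need the machinery behind Theorem \ref{thm:main} is correct, but it is needed for the lower bound as much as for the upper bound, not only for the latter as your proposal suggests.
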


Moreover, we have a stronger form of Corollary \ref{cor:three-four-five}:
the assumption that both types have positive density is not needed for the upper bound,
except that in models (iv) and (v) we need type A to have positive density.
We have not determined the precise conditions under which the conclusion holds,
but we observe that our conditions cannot be significantly relaxed. 
For example, if we have $k=\omega(1)$ particles of type $B$, 
the proof of Theorem \ref{thm:persistent}  gives an $O(n\log k)$ bound, 
so the lower bound cannot allow $k$ to be subpolynomial. 
The following result on the predator-prey model also
shows that the upper bound fails if there are $o(n/\log n)$ predators.

\begin{thm}\label{thm:predator}
Consider predator-prey dynamics on a regular graph $G$ on $n$ vertices, 
starting from $k$ predators and $n-k$ prey on distinct vertices,
where $4 \le k\leq n/\log n$ and both types have some nonzero speed independent of $n$.
Let $T$ be the equilibrium time. Then
	\begin{enumerate}[(a),nosep]
		\item If the starting configuration is selected uniformly at random then $\mean{T}=\Omega(n^2/k)$.
		\item If $G$ is from an expander sequence then $\mean{T}=O(n^2/k)$ for any starting configuration.
	\end{enumerate}
\end{thm}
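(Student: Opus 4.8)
The plan is to embed the discrete dynamics in continuous time, giving each particle an independent Poisson clock of rate equal to its type speed, so that $T$ becomes the number of clock rings before extinction. The crucial structural feature of predator–prey dynamics is that predators are never removed: each predator therefore performs an \emph{autonomous} random walk, independent of the prey and of all reactions, and each prey walks autonomously until the instant it is eaten. Hence the fate of a single prey $q$ depends only on the trajectory of $q$ and the $k$ predator trajectories, and not on the other prey. This decouples the system. Writing $\sigma_q$ for the first time $q$ shares a vertex with some predator just after a move, and $\tau_q$ for the first time $q$ \emph{itself} steps onto a predator-occupied vertex, the true time $\hat\tau_q$ at which $q$ is eaten satisfies $\sigma_q\le\hat\tau_q\le\tau_q$, and the continuous-time extinction time is $\max_q\hat\tau_q$. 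Using $\tau_q$ rather than $\sigma_q$ as the upper bound neatly sidesteps the ``shielding'' subtlety that a predator landing on several prey eats only one, since a prey that moves \emph{onto} a predator is always eaten.

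Next I would relate $T$ to these meeting times. While alive, prey $q$ emits $\Theta(s_B\hat\tau_q)$ clock rings in expectation (by optional stopping for its Poisson clock), and the $k$ predators together emit $\Theta(s_A k\max_q\hat\tau_q)$ rings, so up to constants $\mean{T}$ is governed by $s_B\sum_q\mean{\hat\tau_q}$ together with $s_A k\,\mean{\max_q\hat\tau_q}$. The whole problem thus reduces to estimating the meeting time of one walk with $k$ independent walks, where I use that both speeds are $\Theta(1)$.

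For the lower bound (a) I only need an upper bound on the meeting rate. From the uniform start every particle is stationary, hence marginally uniform at all times, and by regularity each move lands on a uniform vertex; a first-moment (union) bound gives expected instantaneous meeting rate at most $k/n$, so $\prob{\sigma_q\le t}\le kt/n$ and $\mean{\sigma_q}=\Omega(n/k)$ on \emph{any} regular graph. Since $\hat\tau_q\ge\sigma_q$, summing over the $n-k=\Omega(n)$ prey gives $\mean{T}\ge s_B\sum_q\mean{\hat\tau_q}=\Omega(n^2/k)$.

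For the upper bound (b) I need a matching \emph{lower} bound on the meeting rate, and here the expansion enters. After a burn-in of $O(\log n)$ continuous time every walk is near stationary; the $k\le n/\log n$ predators then occupy $\Omega(k)$ distinct vertices with high probability (the number of coincidences is $O(k^2/n)=o(k)$), and $\tau_q$ is the hitting time of this moving set of size $\Omega(k)$ by the walk of $q$. The spectral gap gives $\mean{\tau_q}=O(n/k)$ and an exponential tail $\prob{\tau_q>t}\le e^{-\Omega(kt/n)}$, whence a union bound over the $n$ prey yields $\max_q\hat\tau_q\le\max_q\tau_q=O((n/k)\log n)$ with high probability. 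Feeding this into the ring count, the prey contribute $O(s_B\cdot n\cdot n/k)=O(n^2/k)$ and the predators $O(s_A k\cdot(n/k)\log n)=O(n\log n)=O(n^2/k)$, the final step using $k\le n/\log n$, which also absorbs the $O(n\log n)$ burn-in cost. The main obstacle will be exactly this expander hitting-time estimate: I must bound the hitting time of a \emph{moving} target set (the predator positions, which themselves walk) of size only $\Theta(k)$, from an arbitrary unmixed start, with a tail strong enough to survive a union bound over all $n$ prey. This forces me to show that the predators remain spread over $\Omega(k)$ vertices throughout the whole time window (a union bound over time on the coincidence count) and that on an expander a walk hits any linear-in-$k$ set at rate $\Omega(k/n)$ irrespective of the target's motion; the spectral gap hypothesis is precisely what drives both of these.
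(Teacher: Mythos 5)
Your proposal is correct in outline but reaches the result by a genuinely different route from the paper. Your central observation --- that in predator--prey dynamics every trajectory is autonomous (predators are never removed, and a prey's walk is unaffected by anything until the instant it dies), so that after Poissonizing time the system decouples into per-prey hitting problems and $T$ is recovered by counting clock rings, $\mean{T}=s_B\sum_q\mean{\hat\tau_q}+s_A k\,\mean{\max_q\hat\tau_q}$ --- is not how the paper argues. The paper stays in discrete time, where move frequencies depend on the current population, and compensates with fake-particle couplings: for (a) it bounds the fake meeting rate per fake step by $k/(ns_B)$, shows the fake time $\tau$ to accumulate $n/4$ meetings satisfies $\mean{\tau}\geq n^2s_B/8k$, and transfers this to true steps via the submartingale $M_t-t/4$; for (b) it runs $O(\log n)$ rounds, each with a fresh coupling, a mixing phase and a hitting phase, halving the prey population per round. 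Your Wald/optional-stopping accounting replaces both devices and is arguably cleaner here, precisely because this model has no feedback from reactions to trajectories; the paper's round-and-coupling machinery is built to survive models where that autonomy fails (cf.\ Theorem \ref{thm:persistent}), so it buys generality that this particular theorem does not need. Your part (a) needs only the minor fix that a (prey, predator) pair starts uniform on \emph{distinct} pairs, which is not stationary for the product chain but is dominated pointwise by $(1-1/n)^{-1}$ times the stationary measure, a domination preserved by the evolution.

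Two points in part (b) are where your sketch leans on real work. First, your ``main obstacle'' --- hitting a moving $\Omega(k)$-set at rate $\Omega(k/n)$ with an exponential tail, robust to conditioning on having missed so far --- is exactly the paper's Moving Target Lemma (Lemma \ref{lem:hitting}), whose proof is a nontrivial $L^2$ argument showing that conditioning on a miss inflates the squared distance to stationarity by only a constant factor, which a constant number of further walk steps repairs; you may cite it (it precedes this theorem in the paper), but it is the heart of the matter, not a routine spectral estimate. Second, your claim that the predators occupy $\Omega(k)$ distinct vertices \emph{throughout} the time window does not follow from ``expected coincidences $O(k^2/n)$ plus a union bound over time'': Markov's inequality gives only failure probability $O(k/n)$ per time step, which a union bound over polynomially many steps cannot absorb. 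The paper instead reveals the (mixed) predator positions one by one and bounds the probability that $3k/4$ of them land in at most $k/4$ already-occupied sites by $2^k(k/2n)^{3k/4}=o(n^{-2})$ --- this is the one place the hypothesis $k\ge 4$ enters, and your argument needs this (or a comparably strong) concentration step to make the union bound over both prey and time go through.
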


\subsection{Comparison with previous results}\label{prev}

Cooper, Frieze and Radzik \cite{CFR09} gave a unified treatment 
of the special models considered above,
with three significant additional assumptions:
(a) the initial populations are polynomially small and randomly distributed,
(b) the underlying geometry is a random regular graph, and
(c) the types move simultaneously with equal speeds.
Removing assumption (a) is the main difficulty in our work
(see the above discussion of the Big Bang),
although considering general spectral expanders rather than (b)
and variable speeds rather than (c) also poses additional challenges.
 
Our model allowing for different speeds is much more general, 
as can be seen by considering the possibility of a zero speed (stationary) type,
which one might at first think would be simple to analyse,
but in fact is often an interesting and difficult model in its own right.
For example, balanced annihilation with one type stationary
strongly resembles the IDLA setting of \cite{RSSS19} mentioned above, 
except that in their model the moving particles start from what we might expect to be 
the worst-case scenario of all having accumulated at a single vertex. 

For the predator-prey or infection models, making type A (predators or infected individuals)
stationary is not so interesting, as the equilibrium time is then just a sum of hitting times
from the type B particles to the set of type A particles. 
In both cases, making $B$ particles stationary is more interesting. 
Predator-prey dynamics with $k$ moving predators and $n-k$ stationary prey 
is equivalent to the cover time of the graph by $k$ random walkers,
which was analysed in great detail by Rivera, Sauerwald and Sylvester \cite{RSS23}. 
Infection with stationary healthy individuals  is closely related to the frog model,
where we start from one active frog moving among a system of sleeping particles, 
with the latter becoming active frogs once their vertex is visited. 
(The frog model often has the further complications that active frogs can die
and there may be a random number of sleeping particles at each site.)
Much of the frog literature concerns propagation on infinite graphs,
although there are some results on the cover time (which is analogous to our model)
for special graphs by Hermon \cite{Her18} and Hoffman, Johnson and Junge \cite{HJJ19}.

Coalescence has been much studied as the dual process of the voter model,
where particles do not move but update their opinions to that of a random neighbour.
This was first considered on general graphs some 40 years ago by Donnelly and Welsh \cite{DW83}. 
More recently, Cooper, Els\"{a}sser, Ono and Radzik \cite{CEOR} gave a general upper bound 
in terms of the spectral gap and degree variability of the underlying graph. 
Haslegrave and Puljiz \cite{HP17} analysed a generalisation of the voter process 
which permits two opinions of different persuasiveness, similar to our variability in speed;
one of their results is that the complete graph has the smallest expected equilibrium time among regular graphs.

Finally, the balanced two-type annihilation model that we consider in great detail in this paper
was previously only analysed by Cristali et al.~\cite{2-type} for the complete graphs (the mean-field case) and stars.
For these graphs, the one-type annihilation model is easy to analyse, but the two-type process is much harder, 
owing to the possibility of multiple particles of the same type occupying a single site. 
While they were able to give precise results on the extinction time for stars, for the mean-field case 
they gave a lower bound of $2n\log n$ and an upper bound of $20n\log^2n/\log\log n$, 
thus leaving the open problem of showing that it is $\Theta(n\log n)$;
as discussed above, we prove this on all regular graphs with sufficiently strong spectral expansion.
Moreover, in \cite{clique} we determined the mean-field extinction time asymptotically as $(1+o(1))n\log n$,
for any speeds of the particles, even if the ratio of speeds is allowed to grow with $n$,
which creates additional difficulties (bounded speed ratio is assumed in~\cite{2-type}).
Our methods in \cite{clique} are specific to the complete graph 
and have no overlap with those introduced in this paper. In particular, for the complete graph we use the fact that
the probability of a collision at each step depends only on the number of vertices occupied by each type of particle, and 
that if the number of sites occupied is sufficiently small compared to the total number of particles, there is a strong
tendency for this disparity to self-correct, irrespective of the precise configuration of particles; neither of these facts apply to
general graphs.

\subsection{Methods and organisation}

As indicated above, the generality of our models requires the development of new combinatorial comparison techniques.
A basic strategy in many of our proofs is to consider a short segment of the process, at the start of which we couple the
`true' particles with `fake' particles, where the fake particles go on to follow non-interacting walks, and so gradually
become decoupled from the true particles as time proceeds. We aim to show that with high probability 
a suitable set of collisions have occurred in the non-interacting process, 
and deduce that a certain number of reactions must have taken place. 
The time period of the coupling must be chosen so that  the total number of particles 
does not change by more than a constant factor,
as otherwise the rates of passing of true time and fake time are not comparable.
This highlights the difficulty of non-agential models such as balanced two-type annihilation, 
where the total number of particles decays and many time periods are necessary.

We will divide the paper into two parts, where in the first part we treat general agential dissipative models,
and in the second part we consider the harder setting of balanced two-type annihilation.
The first section of the first part introduces a Moving Target Lemma that will be useful throughout the paper.
An intuitive statement is that we get a roughly tight estimate for the probability of an approximately 
stationary random walk on an expander hitting a target, where the target is allowed to move,
but we `wait' a constant number of steps between each `attempt' to hit the target
to allow the conditional distribution of the walker to settle back towards approximate stationarity.
The second section of the first part introduces a poissonisation technique for handling dependencies
that will also be  useful throughout the paper; we combine this with a second moment argument to prove
our result on lonely walkers, which implies the lower bounds for our general models.
The remainder of the first part treats some special models, using a variety of ideas
on hitting times of random walks and toppling of  `abelian sandpiles'.

In the second part on balanced two-type annihilation, the techniques developed in the first part
are combined with additional technical arguments needed to address the new difficulties 
that arise in the proofs of both the upper and lower bounds. For the upper bound,
the Moving Target Lemma can still be used for comparison, although the previous basic argument
does not suffice, so is replaced by a refined argument using Hall's Matching Theorem.
Furthermore, there are various regimes for the number of remaining particles, where in some
regimes the approximate stationarity needed for the Moving Target Lemma can only be guaranteed
for the faster blue particles, so further technical arguments are needed to either control the
distribution of the slower red particles, or show that even a moderately bad distribution of red particles
can be handled by tighter concentration inequalities. 

For the lower bound, we start by illustrating the 
arguments for the one-type annihilation model, which has many of the key ideas but the important
simplifications that all particles move at the same speeds and each vertex has at most one particle.
The key step is to show that when $k=n^x$ particles remain with $x<1$ then 
 there are constants $c_1,c_2,c_3>0$ so that
starting from any configuration $A$ of $k$ particles, 
with probability at least $c_1$ we need at least $c_2 n$ steps
to reduce the number of particles to $c_3 k$.
We prove this via results relating return times to the spectral gap 
and using trajectory reversal arguments to control meetings by returns.
For the two-type model, there are additional arguments to handle variation in speeds,
and a subtle application of Reimer's inequality on disjointly occurring events,
which is used to rule out `catastrophic collapse', \ie  reducing from $n^c$ to $n^{c-o(1)}$ 
remaining particles so rapidly that the surviving particles do not have time to mix.

\subsection{Notation}\label{sec:notation}

For a $d$-regular connected graph $G$, let $\bm Q$ be the Laplacian, 
let $\bm P$ be the transition matrix of the lazy random walk and let $\bm A$ be the adjacency matrix. 
Then we have $\bm Q=d\bm I-\bm A$ and $\bm P=\frac{1}{2}\bm I+\frac{1}{2d}\bm A$, 
giving $\bm Q=2d(\bm I-\bm P)$. Let $0=\lambda_1\leq\lambda_2\leq\cdots$ 
be the eigenvalues of $\bm Q$, and $1=\mu_1\geq\mu_2\geq\cdots \ge 0$ be the eigenvalues of $\bm P$. 
Then $1-\mu_2=\frac{1}{2d}\lambda_2$. We say that a sequence of graphs $G$ form an 
\emph{expander sequence} if $1-\mu_2(G)\geq c$ for some constant $c>0$, \ie $\lambda_2=\Theta(d)$.

We often consider the modification of our processes where we replace random walk 
steps by lazy random walk steps.
By suppressing the lazy steps in which a particle does not move we obtain a 
copy of the original process.
By Chernoff bounds, with high probability the number of lazy steps and normal steps 
are asymptotically equal,
so this modification essentially doubles the equilibrium time. We refer to a particle "moving" if it is is selected to take a step of the lazy random walk, even if that step has no effect.

Throughout the paper, our main focus is on systems of discrete-time random walks where exactly one 
particle is selected to move at each discrete time. We refer to a single step of this process as a ``time step''. For the purposes of the proofs, we vary this in two ways. First, and most commonly, we consider two linked copies of this process, where in each time step either one particle moves in each copy (in which case the two moving particles correspond), or one particle moves in one copy and nothing happens in the other. We sometimes refer to this as ``fake time'', with ``real time'' being the number of movements that have occurred in the primary copy. We ensure that a movement occurring in both copies has constant probability, so that ``real time'' and ``fake time'' differ only by a constant factor with high probability.

Secondly, we occasionally (see Lemmas \ref{lem:meeting-vs-return} and \ref{lem:mr2} and the proof of Theorem \ref{thm:no-meeting}) approximate by a continuous-time process where particles independently take (lazy) random walk steps at given rates (in order to remove dependencies), and then discretise by considering the positions of particles at integer times. For clarity, we refer to these discrete transitions, during which more than one particle might move, and an individual particle might move more than once, as ``time intervals''. Each time interval then corresponds to as many time steps as movements that occur in that interval.

We use standard asymptotic notation throughout and suppress notation
for rounding to integers where it does not affect the argument. 

\newpage

\part{General models}

In this part we prove our results on general agential dissipative models.
We start in Section \ref{sec:moving} by proving our general upper bound (Theorem \ref{thm:persistent}),
via the Moving Target Lemma (Lemma \ref{lem:hitting}) that will prove to be a useful tool throughout the paper.
In Section \ref{sec:lonely} we introduce the Poissonised model that will also be frequently useful,
and apply it to prove our result on lonely walkers (Theorem \ref{thm:no-meeting}),
which easily implies Corollary \ref{cor:survival}, and so the lower bounds in Corollary \ref{cor:three-four-five}.
The remainder of the part treats some special models.
In Section \ref{sec:topple} we apply various results on hitting times
and toppling of  `abelian sandpiles' to analyse 
the worst-case expected extinction time (Proposition \ref{prop:worst}) 
and the particle-hole model (Theorem \ref{thm:p=0} on stationary reds). 
We conclude the part in Section \ref{sec:predator}  by proving Theorem \ref{thm:predator}
on the predator-prey model with few predators.

\section{Hitting moving targets}\label{sec:moving}

In this section we prove our main upper bound for general models (Theorem \ref{thm:persistent}).
The key lemma is the following result on an approximately stationary random walk hitting a moving target.
Intuitively, such a walk should hit a target $k$-set with probability $O(k/n)$, so the probability of missing
the target $sn/k$ times should decay exponentially in $s$. However, the hitting events are not independent,
so the idea of the proof is to show that conditioning on missing a target only increases the $L^2$ distance
to stationarity by a constant factor, which is counterbalanced by a constant number of random walk steps
before the next target is considered.

\begin{lem}\label{lem:hitting} (Moving Target Lemma)
Let $G$ be a regular graph on $n$ vertices and 
$k,r,s$ be positive integers with $k \le n/8$ and $\mu_2(G)^{2r} < 1/17$.
Suppose $A_0, A_1,\ldots$ are deterministic sets of $k$ vertices.
Let a particle $p$ follow a lazy random walk $v_0,v_1,\ldots$ on $G$, where
\begin{equation}\sum_{v\in V(G)}(\prob{v_0=v}-1/n)^2\leq \frac{k}{4n^2}.\label{init-dist}\end{equation}
For each $i$ let $E_i$ be the event $v_{ir}\in A_{ir}$
and let $X_i$ be the event that $E_j$ does not hold for any $j<i$.
Let $\ell := \ceil{6n/k}$.
Then $\ee^{-18s} \le \prob{X_{s\ell}} \le \ee^{-3s}$.
\end{lem}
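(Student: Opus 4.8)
The plan is to track, for each $i$, the conditional law $\bm\nu_i$ of the walker's position $v_{ir}$ given the survival event $X_i$, and to control its $L^2$ distance to the uniform distribution $\bm\mu$. Write $q_i=\prob{X_i}$ and let $\bm w_i$ be the sub-probability vector with $\bm w_i(v)=\prob{v_{ir}=v,\,X_i}$, so that $\bm\nu_i=\bm w_i/q_i$. I would decompose $\bm w_i=q_i\bm\mu+\bm f_i$ with $\bm f_i$ orthogonal to the all-ones vector, and set $g_i:=\|\bm f_i\|_2^2/q_i^2=\sum_v(\nu_i(v)-1/n)^2$. The quantity I maintain as an invariant throughout the iteration is $g_i\le k/(4n^2)$, which matches the hypothesis \eqref{init-dist} at $i=0$ (where $q_0=1$, so $\bm\nu_0$ is the law of $v_0$).

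Two elementary estimates drive everything. First, writing $a_i:=\prob{E_i\mid X_i}=\bm\nu_i(A_{ir})$ for the conditional hitting probability, Cauchy--Schwarz gives $|a_i-k/n|\le\sqrt{k\,g_i}$; under the invariant this pins $a_i$ into the window $[k/(2n),\,3k/(2n)]$, and in particular $a_i\le 3/16$ since $k\le n/8$. Second, I need to see how $g_i$ evolves over one round. Passing from $\bm w_i$ to $\bm w_{i+1}$ consists of zeroing $\bm w_i$ on the target $A_{ir}$ (the conditioning step) and then applying $\bm P^r$. Since the orthogonal projection onto the complement of the all-ones direction is a contraction that kills $q_i\bm\mu$, and since $\bm w_i\mathbf 1_{A_{ir}^c}-q_i\bm\mu$ has disjointly supported pieces ($-q_i/n$ on $A_{ir}$ and $f_i$ off it), one gets $\|\bm f_i'\|_2^2\le q_i^2k/n^2+\|\bm f_i\|_2^2$ for the projected measure $\bm f_i'$; applying $\bm P^r$, which contracts the orthogonal complement of the all-ones vector by $\mu_2^r$, and dividing by $q_{i+1}^2=q_i^2(1-a_i)^2$ yields the recursion
\begin{equation*}
g_{i+1}\le\frac{\mu_2^{2r}}{(1-a_i)^2}\Bigl(\frac{k}{n^2}+g_i\Bigr).
\end{equation*}

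This is the crux, and the step I expect to be the main obstacle: conditioning on a miss inflates the normalised $L^2$ distance by only the bounded factor $(1-a_i)^{-2}\le(16/13)^2$, while the $r$ intervening steps contract it by $\mu_2^{2r}<1/289$ (using $\mu_2^r<1/17$). Feeding $g_i\le k/(4n^2)$ and $a_i\le 3/16$ into the recursion gives $g_{i+1}\le\frac{(16/13)^2}{289}\cdot\frac{5k}{4n^2}<k/(4n^2)$ with room to spare, closing the induction. With the invariant in hand, $a_i\ge k/(2n)$ for all $i$ yields the upper bound: since $\ell\ge 6n/k$,
\begin{equation*}
\prob{X_{s\ell}}=\prod_{i=0}^{s\ell-1}(1-a_i)\le\ee^{-\sum_{i<s\ell}a_i}\le\ee^{-s\ell\cdot k/(2n)}\le\ee^{-3s}.
\end{equation*}

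For the lower bound I use $a_i\le 3k/(2n)$ together with $1-x\ge\ee^{-x/(1-x)}\ge\ee^{-(16/13)x}$ (valid for $x\le 3/16$) and $\ell\le 6n/k+1$, to get $\prob{X_{s\ell}}\ge\ee^{-(16/13)\sum_{i<s\ell} a_i}\ge\ee^{-(16/13)\cdot\frac{147}{16}s}>\ee^{-18s}$. Thus both tail bounds reduce to routine manipulation of $\prod(1-a_i)$ once the recursion for $g_i$ and its inductive invariant are established; essentially all the content is in showing that a single conditioning-on-a-miss costs only a constant factor in $L^2$ distance, which the $\mu_2^{2r}$ contraction then absorbs.
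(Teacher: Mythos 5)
Your proof is correct, and its skeleton coincides with the paper's: both maintain the inductive invariant $\sum_v(\prob{v_{ir}=v\mid X_i}-1/n)^2\le k/(4n^2)$, both extract $k/(2n)\le\prob{E_i\mid X_i}\le 3k/(2n)$ from it by Cauchy--Schwarz, and both finish via $\prob{X_{s\ell}}=\prod_{i<s\ell}(1-a_i)$ and elementary exponential inequalities. Where you genuinely differ is in the mechanics of the inductive step. The paper works directly with the conditional distribution: it writes $d_0=\frac{k}{n^2}+\sum_{v\notin A_{ir}}\bigl(\frac{\prob{v_{ir}=v\mid X_i}}{\prob{E_i^c\mid X_i}}-\frac1n\bigr)^2$, splits the remaining vertices into those where the rescaled probability is below or above $1/n$, and estimates the two parts separately to get $d_0\le\frac{17k}{4n^2}$, then contracts by $\mu_2^r<1/17$. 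You instead keep the unnormalized measure $\bm w_i$, observe that zeroing it on $A_{ir}$ and projecting off the all-ones direction costs only an additive $q_i^2k/n^2$ (your disjoint-support observation, which is valid since the projection is an $L^2$ contraction killing $q_i\bm\mu$), contract by $\mu_2^{2r}$, and only at the end divide by $q_{i+1}^2=q_i^2(1-a_i)^2$. This isolates the renormalization cost as a clean multiplicative factor $(1-a_i)^{-2}$ and avoids the paper's two-case analysis; it is also slightly sharper, both because you use the squared contraction $\mu_2^{2r}$ rather than $\mu_2^r$, and because your lower-bound inequality $1-x\ge\ee^{-16x/13}$ (for $x\le 3/16$) yields $\prob{X_{s\ell}}\ge\ee^{-147s/13}$, comfortably above $\ee^{-18s}$ even after accounting for the ceiling in $\ell=\ceil{6n/k}$ --- whereas the paper's cruder $1-t\ge\ee^{-2t}$ leaves the constant marginally short ($\ee^{-(18+3/8)s}$) once that ceiling is included. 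So your route buys a tidier inductive step and tighter constants, at the cost of introducing the sub-probability-measure formalism; the paper's buys a more hands-on computation requiring no projection language.
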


\begin{rem}
When applying Lemma \ref{lem:hitting}, we will be interested in whether the particle $p$
hits some other set of particles, located at the target set $A_i$ at step $i$.
This can be reduced to the setting of the lemma by revealing the times at which each particle moves
and all positions and movements of all particles other than $p$. The only randomness is then in the random walk
followed by $p$, and the corresponding target sets $A_i$ are deterministic conditional on the revealed randomness.
\end{rem}

\begin{proof}
We will prove the following claim for each $i$:
\begin{equation}\frac{k}{2n}\leq\prob{E_i\mid X_i}\leq \frac{3k}{2n}.\label{Ei-bounds}\end{equation}
This will suffice to prove the lemma, 
using $\prob{X_{s\ell}} = \prod_{i=0}^{s\ell} \prob{E_i^c \mid X_i}$
and $e^{-2t} \le 1-t \le e^{-t}$ for $t \in [0,1/2]$. 
To prove the claim, it suffices to show that
\begin{equation}\sum_{v\in V(G)}(\prob{v_{ir}=v\mid X_i}-1/n)^2\leq \frac{k}{4n^2}.
\label{small-dist}\end{equation}
Indeed, if  \eqref{Ei-bounds} failed then the triangle inequality would give
\[\sum_{v\in A_{ir}}\biggl|\prob{v_{ir}=v\mid X_i}-\frac{1}{n}\biggr|>\frac{k}{2n},\]
but this contradicts \eqref{small-dist}  by Cauchy--Schwarz.

It remains to show \eqref{small-dist}. We use induction on $i$.
It holds for $i=0$ by assumption. Now suppose inductively that it holds for some $i \ge 0$.
For each $j=0,\ldots,r$, let 
	\[d_j=\sum_{v\in V(G)}(\prob{v_{ir+j}=v\mid X_{i+1}}-1/n)^2.\]
Writing $q_j$ for the vector of $\prob{v_{ir+j}=v\mid X_{i+1}}$ for $v\in V(G)$, we have $q_j=\bm Pq_{j-1}$, for $\bm P$ as in Section \ref{sec:notation}. Note that $\bm P$ has an orthogonal basis of eigenvectors $z_1,\ldots, z_n$, where the Perron--Frobenius eigenvector $z_1$ has all entries $1/n$. Since $d_j=\|q_j-z_1\|^2$, and $q_{j-1}\cdot z_1=\|z_1\|^2$, it follows that $d_j\leq \mu_2^2d_{j-1}$.
As $X_{i+1} \Rightarrow E_i^c  \Rightarrow v_{ir} \notin A_{ir}$, we have
\begin{align}
\prob{v_{ir}& =v\mid X_{i+1}}=\frac{\prob{v_{ir}=v\mid X_{i}}}{\prob{E_i^c\mid X_i}} 1_{v\not\in A_{ir}}, \nonumber \\
\label{d0} \text {so} \quad d_{0} & =\frac{k}{n^2}+\sum_{v\not\in A_{ir}}\left(\frac{\prob{v_{ir}=v\mid X_{i}}}{\prob{E_i^c\mid X_i}}-\frac{1}{n}\right)^2.
\end{align}
We partition $V(G)\setminus A_{ir}$ as $(U^-,U^+)$, 
where $U^- = \Big\{ v: \frac{\prob{v_{ir}=v\mid X_i}}{\prob{E_i^c\mid X_i}}<1/n \Big\}$. 
Now
	\begin{align}\sum_{v\in U^-}\left(\frac{\prob{v_{ir}=v\mid X_{i}}}{\prob{E_i^c\mid X_i}}-\frac{1}{n}\right)^2
	&\leq\sum_{v\in U^-}\left(\prob{v_{ir}=v\mid X_{i}}  -\frac{1}{n}\right)^2 \leq \frac{k}{4n^2}\label{Uminus}\end{align}
by the inductive hypothesis \eqref{small-dist}.
We also have $\prob{E_j^c\mid X}^{-1}\leq (1-3k/2n)^{-1}\leq 1+2k/n$, 
by  \eqref{Ei-bounds}, which follows from the inductive hypothesis \eqref{small-dist}.
For any $v\in U^+$, we consider
\[ (ap_v-1/n)^2 - a^2(p_v-1/n)^2 = \frac{a-1}{n} \left(2ap_v - \frac{a+1}{n}\right) < \frac{2k}{n^2} \cdot \frac{5p_v}{4} \]
with $a = \prob{E_j^c\mid X}^{-1} \le 1+2k/n$ and $p_v = \prob{v_{ir}=v\mid X_{i}}$. Summing over $v$,
using $\sum p_v = 1$ and $\sum_v (p_v-1/n)^2 \le \frac{k}{4n^2}$ by  \eqref{small-dist}, we deduce
	\begin{equation}
		\sum_{v\in U^+}\left(\frac{\prob{v_{ir}=v\mid X_i}}{\prob{E_i^c\mid X_i}}-\frac1n\right)^2
		< \left(1+\frac{2k}{n}\right)^2\frac{k}{4n^2}+\frac{5k}{2n^2}<\frac{3k}{n^2}. \label{Uplus}
	\end{equation}
Combining \eqref{d0}, \eqref{Uminus} and \eqref{Uplus} gives $d_0\leq\frac{17k}{4n^2}$.
As $\mu_2(G)^{2r}<1/17$, we deduce $d_r\leq \frac{k}{4n^2}$, \ie \eqref{small-dist} holds for $i+1$, as required.
\end{proof}

Our next lemma provides a useful property of dissipative agential models.
We require the following definition. Let $\prec$ be a total ordering on the ephemeral types.
We say $\prec$ is an \emph{ephemeral ordering} if for any ephemeral type $x$,  
no particle can become type $x$ after all ephemeral types prior  to $x$ in $\prec$ are eliminated.

\begin{lem}\label{lem:ordering}
Any dissipative agential particle system has an ephemeral ordering.
\end{lem}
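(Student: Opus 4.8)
The plan is to reduce the existence of an ephemeral ordering to the acyclicity of a suitable digraph on the ephemeral types, and then to establish that acyclicity by a draining argument in the mean-field model.

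First I would use the structural fact already observed in the text: in an agential system every effective reaction has the form $\{P,e\}\to S$ with $P$ persistent and $e$ ephemeral (persistent--persistent reactions are impossible, and ephemeral--ephemeral effective reactions are forbidden by definition). I then define a digraph $D$ on the ephemeral types that occur with positive density in some configuration reachable from the initial state, placing any ephemeral type that never occurs arbitrarily at the bottom of the order. Put an arc $e\to x$ in $D$ whenever some effective reaction $\{P,e\}\to S$ has $x\in S$ with $x$ ephemeral. I claim any topological order $\prec$ of $D$ (with the never-occurring types below) is an ephemeral ordering. Indeed, fix an ephemeral type $x$ and suppose all $y\prec x$ have been eliminated. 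Any reaction producing $x$ has the form $\{P,e\}\to S\ni x$ with $P$ persistent and $e$ ephemeral; for it to fire we need $e$ present, while $P$ is always present. If $e$ occurs then $e\to x$ is an arc of $D$, so $e\prec x$ and hence $e$ has been eliminated; if $e$ never occurs it is already absent. Either way no such reaction can fire, so no particle can become type $x$.

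It remains to show $D$ is acyclic (in particular loopless). Suppose for contradiction $D$ contains a cycle, which we may take to be simple, $e_1\to e_2\to\cdots\to e_k\to e_1$, arising from effective reactions $R_i\colon \{P_i,e_i\}\to S_i$ with $e_{i+1}\in S_i$ (indices mod $k$; the case $k=1$ is a loop). Since each $e_i$ occurs, I would run the mean-field process to a configuration where some $e_{i_0}$ is present, and then \emph{traverse} the cycle by performing $R_{i_0},R_{i_0+1},\dots$ in order: each $R_i$ produces the copy of $e_{i+1}$ consumed by the next reaction, so a traversal can be carried out as long as the $P_i$ are not yet exhausted, and since the $e_i$ are distinct each is consumed once and produced at least once, so every $e_i$ is non-decreasing over a traversal. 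Thus, once some $e_{i_0}$ is present, we may keep traversing for as long as each persistent type $P_i$ still has enough copies. Each $R_i$ is effective, hence strictly lowers the total energy by at least a fixed positive amount, so each traversal does too; as the total energy is bounded below by $0$ and finite, only finitely many traversals can occur. Therefore traversal must eventually become impossible, and since the $e_i$ persist this can only happen when some $P_i$ has been driven to density $0$ --- contradicting the persistence of $P_i$. Hence $D$ is acyclic and the topological order above is an ephemeral ordering.

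The main obstacle is this acyclicity step, and the crux within it is that a cycle forces the consumption of persistent particles that cannot be replenished: the ephemeral cycle types recycle (net non-decreasing per traversal), while dissipativity guarantees a definite energy loss per traversal, so the only `fuel' that can run out is a persistent input type, whose exhaustion contradicts persistence. One must also take minor care to bootstrap the traversal from a single reachable instance of a cycle type, to reduce to a simple cycle, and to note that persistent types occur in $\Omega(n)$ copies so that a traversal is feasible until one is essentially exhausted; but these are routine, and the substantive input is precisely the interaction between dissipativity (bounding the number of traversals) and agentiality (forcing every reaction to spend a persistent particle).
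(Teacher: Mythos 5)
Your proof is correct and takes essentially the same route as the paper's: define the precedence digraph on ephemeral types via effective reactions, rule out directed cycles by traversing a cycle repeatedly (ephemeral counts non-decreasing, total energy strictly decreasing, so the persistent ``fuel'' would have to run out, contradicting persistence), and then take a linear extension. Your extra care in restricting the digraph to types that can actually occur, with never-occurring types placed at the bottom, is a minor refinement of a point the paper passes over (its cycle argument starts ``from any configuration with at least one particle of type $x_1$'', which tacitly assumes $x_1$ is reachable), but the substance of the argument is identical.
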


\begin{proof}
Consider any dissipative agential particle system. Recall that for such systems 
every reaction involves one persistent type and one ephemeral type.
We define a relation $\prec$ on ephemeral types as follows. 
If $x$ and $y$ are ephemeral types then $x\prec y$ if there is some 
effective reaction with input including $x$ and output including $y$.
We will show that $\prec$ can be extended to a total ordering,
which will then be the required ephemeral ordering,
as after all types prior to $x$ have been eliminated 
then all output ephemeral particles must come after $x$.

First we note that $\prec$ is irreflexive, \ie we cannot have $x\prec x$.
Indeed, we would have an effective reaction with input $\{A,x\}$ for some persistent type $A$
and output including $x$ but not $A$ (as the system is dissipative).
However, repeating this reaction can eliminate $A$, contradicting persistence.
It remains to show that $\prec$ has no directed cycles,
\ie we cannot find ephemeral types $x_1,\ldots,x_r$ 
such that $x_1\prec\cdots\prec x_r\prec x_1$. 
Suppose that such a cycle exists, where for each $i$ there is an effective reaction 
with input $\{A_i,x_i\}$ for some persistent type $A_i$
and output including $x_{i+1}$ (where $x_{r+1}:=x_1$).
From any starting configuration with at least one particle of type $x_1$,
we can invoke these reactions in order (all $A_i$ are available by persistence).
After these reactions, the total energy of ephemeral particles has not decreased,
whereas the total energy has decreased (as the system is dissipative),
so the total energy of persistent particles has decreased.
However, repeating this cycle will then decrease the energy of persistent particles
until some persistent type is eliminated, which is a contradiction.	
\end{proof}

Before proving the main result of the section, we pause to collect some
standard facts on mixing of random walks on expanders.
Recall that $\bm P=\frac{1}{2}\bm I+\frac{1}{2d}\bm A$
is the transition matrix of the lazy random walk on $G$,
which is a $d$-regular connected graph on $n$ vertices
with eigenvalues $1=\mu_1\geq\mu_2\geq\cdots \ge 0$.
Thus $P^t_{uv}$ for any vertices $u,v$ is the probability that 
the lazy random walk started at $u$ arrives at $v$ at time $t$. 
By a simple calculation (see e.g.~\cite[(3.1)]{Lov96}, noting that the stationary distribution is uniform),
\begin{equation}\label{converge}\lvert P^t_{uv}-1/n \rvert\leq \mu_2^t.\end{equation}
Thus the mixing time is $O(\log n)$. To be precise, for $t\geq \frac{3\log n}{-\log \mu_2}$ we have
\begin{equation}\label{prob-bounds}\frac1n-\frac1{n^3}\leq P^t_{uv} \leq
 \frac1n+\frac1{n^3}.\end{equation}

We conclude this section by proving our main upper bound on general models.

\begin{proof}[Proof of Theorem \ref{thm:persistent}]
Let $G$ be a graph on $n$ vertices from a regular expander sequence.
Consider any dissipative agential particle system on $G$
starting from any configuration with at most $n$ particles.
As no reaction between persistent types is possible,
to bound the equilibrium time it suffices to prove that 
in time $O(n\log n)$ all ephemeral types are completely eliminated
(with high probability and in expectation).
As noted above, we can consider the modified process with lazy random walks.

We consider the ephemeral types according to the ephemeral ordering provided by Lemma \ref{lem:ordering}. 
It suffices to show for each type $x$ in the ordering that, starting from any configuration where all types
prior to $x$ have been eliminated, with high probability and in expectation in time $O(n\log n)$,
either (a) all particles of type $x$ react effectively, 
or (b) there are at least $cn$ effective reactions, for some fixed $c>0$.
This indeed suffices, as option (a) can only occur once for each ephemeral type
and option (b) can only occur $O(1)$ times for a dissipative system.
We fix some type $A$ with some positive probability 
of an effective reaction when meeting $x$; note that $A$ must be persistent.

We first consider a `mixing phase', in which we run the process 
for time $T_1 = K_1 n\log n$, for some large constant $K_1$.
For the analysis, we consider a parallel system of non-interacting `fake' particles, 
starting in the same positions as the current particles, and maintain a partial pairing
between the true particles and the fake particles (initially a complete pairing with each true particle paired to the fake particle starting at the same vertex).
Paired particles become unpaired if the true particle in the pair participates in an effective reaction.
In each step, we select a particle to take a random walk step 
with probability proportional to its speed (as described in Section \ref{subsec:dissipative}),
where we simultaneously consider true and fake particles, 
but count paired particles as a single particle.
If a pair is selected then they take the same random walk step in both systems.
If an unpaired particle from one system is selected then the other system ignores this step.
We note that this coupling gives the correct marginal distribution for the true system.

At any step, by persistence of $A$, for some fixed $\alpha>0$
we can assume there are at least $\alpha n$ paired type $A$ particles
(otherwise there are $\alpha n$ effective reactions, so option (b) holds).
There is some constant $C$ depending on the model 
such that there can be at most $Cn$ particles by the dissipative property.
We say that a paired type $A$ particle is \emph{visible} if at least $\alpha/2C$ proportion
of the particles at its site (including itself) are paired type $A$ particles.
Then there are at least $\alpha n/2$ visible paired type $A$ particles.

Throughout, the true and fake populations both have $\Theta(n)$ particles (by persistence),
so both take $\Theta(n\log n)$ steps during the mixing phase.
(Using the assumption that the speeds are fixed independently of $n$;
henceforth this will not be explicitly mentioned.)
Also, by any time $t = \Omega(n\log n)$, 
with high probability each fake particle takes $\Theta(\log n)$ steps.
Conditional on the initial locations of the fake particles, 
and the number of random walk steps each has taken, all being $\Theta(\log n)$,
their locations are independent random variables
and approximately uniform in the sense of \eqref{prob-bounds}. 

Next we let $\eps = z\alpha/3$, with $z>0$ being
a lower bound on the probability that a type $x$ particle
will have an effective reaction with a visible paired type $A$ particle
when it arrives at a site containing such a particle.
We claim that for some $\eta>0$,
with high probability at each time $t$ with $\Omega(n\log n) = t = O(n^2)$,
any set of $\eps n$ paired type $A$ particles occupies at least $\eta n$ vertices.
To see this, first note by \eqref{prob-bounds} each paired particle is 
at any given site with probability at most $2/n$, 
independently given the above conditioning.
Let $E_t$ be the event that there is some set of $\eta n$ vertices
that includes the locations at time $t$ of at least $\eps n$ particles.  Then
	\[\prob{E_t}\leq\binom{n}{\eps n}\binom{n}{\eta n}(2\eta)^{\eps n}
	\leq (\exp(1+\eta/\eps)\eps^{-1}\eta^{1-\eta/\eps})^{\eps n} < \eta^{\eps n/2}\]
for $\eta$ sufficiently small. The claim follows by a union bound over $t$. 

Now we will consider the elimination of type $x$, via the Moving Target Lemma.
We will show that this occurs with high probability and in expectation
by time $T_2 = K_2 n\log n$, for some large constant $K_2$.
Fix any particle $i$ of type $x$. We consider the particle system up to the stopping time $\tau$
which is the minimum of $T_2$ and the time at which $i$ undergoes an effective reaction.
Thus $i$ has not yet undergone an effective reaction before time $\tau$.
We need to bound $\prob{\tau=T_2}$. We reveal (a) which particle moves at each time step
and (b) all movements and reactions not involving $i$; thus the only remaining randomness is in
the random walk and reactions of $i$.  We associate to every visible paired type $A$ particle $j$
independent Bernoulli random variables $Z_j$ with probability $z>0$,
which we couple to the process so that if $i$ meets $j$ and this has not previously occurred 
and $Z_j=1$ then an effective reaction occurs.
By Chernoff bounds, with high probability at all times $t$ with $\Omega(n\log n) = t = O(n^2)$
there is some set of $\eps n$ paired type $A$ particles $j$ each having $Z_j = 1$.
By the previous claim, we can assume that these occupy at least $\eta n$ vertices.

Now we apply Lemma \ref{lem:hitting}, with $k = \eta n$ (we can assume $\eta \le 1/8$)
and $s = 2\log n$, where $p=i$ and the sets $A_i$ are the vertices occupied
by the paired type $A$ particles $j$ with $Z_j=1$, as described above.
We conclude that with probability $1 - O(n^{-6})$ 
within $12\eta^{-1}\log n$ steps taken by particle $i$
it arrives at a vertex occupied by a particle with which it has an effective reaction.
Taking a union bound over $i$, with high probability every particle of type $x$
reacts in time $\Theta(n \log n)$. Repeated trials give the same bound in expectation.
As discussed above, we accumulate $O(1)$ such time periods, so this completes the proof.
\end{proof}

\section{Poissonised lonely walkers} \label{sec:lonely}

In this section we prove our result on lonely walkers (Theorem \ref{thm:no-meeting}),
which easily implies Corollary \ref{cor:survival}. 
We have previously noted that Corollary \ref{cor:three-four-five}
is immediate from Theorem \ref{thm:persistent} and Corollary \ref{cor:survival}.
Next we show how  Corollary \ref{cor:survival} follows from Theorem \ref{thm:no-meeting}.

\begin{proof}[Proof of Corollary \ref{cor:survival} assuming Theorem \ref{thm:no-meeting}]
Let $G$ be a regular graph on $n$ vertices. 
Consider any persistent particle system on $G$, 
where all types have the same speed,
and the starting locations of particles are independent and stationary.
We couple this to a system of non-interacting walks corresponding to
fake particles as in the proof of Theorem \ref{thm:persistent},
where the starting locations are independent and stationary
and we randomly assign types according to the starting distribution. 
We consider the coupled process up to time $0.1n\log n$, noting that 
by persistence both systems have $\Theta(n)$ particles at all times,
so the true system takes  $\Omega(n\log n)$ steps with high probability.
By Theorem \ref{thm:no-meeting}, with high probability there are $\omega(1)$ walks that never met another walk.
For any given type $A$ of initial density $c>0$, each such walk has probability $c$
of being assigned type $A$, so with high probability some such walk
corresponds to a surviving type $A$ particle.
\end{proof}

It remains to prove the lower bound on lonely walkers.
The key idea is to remove dependencies via the following Poisson approximation.

\emph{Poissonised model}: We start $\Po(1.1)$ particles at each vertex independently. Each independently takes random walk steps at rate $1/n$, for time $N:=0.11n\log n$. However, we then discretise by considering the transitions between integer time points. Thus we divide into $N$ time intervals, and retain only the information of what movements occur in an interval. Note that in each interval, each particle independently takes $\Po(1/n)$ random walk steps.

We note that the initial distribution of particles is stationary under these dynamics:
at any time there are $\Po(1.1)$ particles at each vertex independently.
This follows from two well-known properties of Poisson variables,
used henceforth without further comment:
\begin{enumerate}[nosep]
\item (Combining) If $X,Y$ are independent Poissons then $X+Y$ is Poisson.
\item (Splitting) If $X$ is Poisson and $Y \sim \operatorname{Bin}(X,p)$ then $Y$ is Poisson.
\end{enumerate}
Before starting the proof, we record a tail bound for Poisson variables from \cite{Poisson}: 
	\begin{equation}\label{poisson-bounds}
	\prob{\operatorname{Po}(\lambda)\geq r}\leq \exp(-r\log(r/\lambda)+r-\lambda) \quad \forall r>\lambda.
	\end{equation}

\begin{proof}[Proof of Theorem \ref{thm:no-meeting}]
Let $G$ be a regular graph on $n$ vertices. 
Consider $n$ independent random walks, starting from stationarity, 
running for $N':=0.1n\log n$ discrete time steps, with one walk moving at each time step. 

We attempt to couple this system to a subset of the particles of the Poissonised model 
described above, as follows. First, if the Poissonised model has at least $n$ particles,
select $n$ of these particles uniformly at random, and independently of their movement. Then, for each time interval, order the
movements occurring in that interval uniformly at random. Now divide into time steps such that exactly one selected particle moves in each time step. Provided the $n$ selected particles move at least $N'$ times in the $N$ time intervals, we can couple the particles of the original process to the selected particles from the Poissonised model so that they follow the same movements for the first $N'$ time steps of the latter. This coupling succeeds provided it is possible to select $n$ particles, and these made at least $N'$ movements in $N$ time intervals, both of which happen with high probability.

It suffices to show that the Poissonised model
with high probability has $\Omega(n^{3/4})$ particles that have never met another particle,
as then with high probability $\Omega(n^{3/4})$ of these were selected, and correspond
to walks in the original system that have never met any other walk.

For each particle and time interval of the Poissonised model, we define the vertices visited by that particle in that interval to consist of the vertex occupied by that particle at the start of that interval together every vertex that is the end of a step taken by that particle during that interval, if any.

We say that two particles of the Poissonised model \textit{collide strongly} 
if they are at the same vertex at some integer time (\ie at the start or end of some time interval). 
We say that they \textit{collide weakly} if they do not collide strongly, 
but there is some vertex visited by both particles in the same time interval. 
For two particles to collide weakly, one of the following must occur in some time interval:
	\begin{itemize}[nosep]
		\item both particles move more than once;
		\item one particle moves more than once, visiting a vertex occupied by the other particle at the start or end of the interval;
		\item both particles move exactly once, with the departure vertex of one being the arrival vertex of the other.
	\end{itemize}

We claim that with high probability only $O(\log n)$ particles have any weak collisions.
To see this, first note that $\prob{\Po(1/n)\geq 2}=\Theta(1/n^2)$ and $\prob{\Po(1/n)\geq 3}=\Theta(1/n^3)$. 
Thus with high probability no particle ever moves more than twice in a single time interval, 
and there are only $O(\log n)$ pairs $(a,t)$ such that particle $a$ moves twice at time $t$, 
with all such pairs being disjoint (\ie no repeated particles or times).
We assume this is the case and reveal the pairs $(v,t)$ such that 
some particle visits vertex $v$ while moving twice in the interval $[t-1,t]$.
By independence, the total number of other particles at $v$ either at time $t$ or time $t-1$,
over all such pairs $(v,t)$, is Poisson with mean $O(\log n)$, 
so with high probability is $O(\log n)$ by \eqref{poisson-bounds}.
Similarly, given that some particle moves from $u$ to $v$ in a given interval, the number of other particles moving to $u$ or from $v$ in the same time interval, in total over all such moves, is with high probability $O(\log n)$.
The claim follows.

For each vertex $v$, let $A_v$ be the event that 
exactly one particle starts at $v$ and it has no strong collisions.
Let $X$ be the number of these events that occur. 
It suffices to show with high probability $X = \Omega(n^{3/4})$.
We will show that $\mean{X}=\Omega(n^{3/4})$ 
and $\Var(X)=\widetilde{O}(\mean{X})=o(\mean{X}^2)$, 
which will suffice by Chebyshev's inequality.

We define a \textit{trajectory} to be a sequence $T=(v^{(T)}_0,\ldots,v^{(T)}_N)$ of vertices.
We say that a particle \emph{follows} trajectory $T$ if for each integer time $t$ its position at time $t$ is $v^{(T)}_t$. 
We say trajectories $T$ and $T'$ \emph{meet} at an integer time $t$ if particles 
following these trajectories occupy the same position at time $t$ but not at time $t-1$. 
Thus particles collide strongly if and only if their trajectories meet.

By construction of the Poissonised model
\begin{enumerate}[nosep, label=(\alph*)]
\item each particle independently has some fixed probability of following $T$,
\item the number $N_T$ of particles following $T$ is $\Po(w_T)$ for some real `weight' $w_T$, and
\item the variables $(N_T)_T$ are independent.
\end{enumerate}
We call a trajectory \emph{valid} if it has at most $\log n$ movements.
By \eqref{poisson-bounds}, the expected number of particles following invalid trajectories is $o(1)$,
so we will ignore such trajectories, as with high probability this does not affect the process.

Next we estimate $\mean{X} = \sum_v \prob{A_v}$.
Fix $v$ and consider the event that exactly one particle starts at $v$
and follows a trajectory $T$ that moves at most $0.11\log n$ times. 
This has probability $1.1\exp(-1.1)(1-o(1))\geq \exp(-1.1)$.
We estimate the  total weight of trajectories $T'$ meeting $T$ as follows.
For each time interval at which $T$ moves, the total weight of trajectories $T'$ that meet $T$ at the end of that interval is $1.1$, so summing over at most $0.11\log n$ such intervals gives weight at most $0.121\log n$.
For each time interval during which some $T'$ moves to meet $T$ at the end of that interval,
the total weight of such $T'$ is at most $1.1 (1-e^{-1/n}) < 1.1/n$.
Summing over at most $0.11n\log n$ such intervals gives weight at most $0.121\log n$.
The total weight of such $T'$ is then at most  $0.242\log n < \tfrac{1}{4} \log n$,
so with probability $\Omega(n^{-1/4})$ no such trajectory has a particle.
Summing over $v$ we obtain $\mean{X} = \Omega(n^{3/4})$.

It remains to bound  $\Var(X)$. 
For each $v$, we partition $A_v$ into events $B_{v,T}$ 
that exactly one particle starts at $v$, 
it follows trajectory $T$, and no other particle meets it.  Writing
\begin{align*}
& D_v  :=\sum_u(\prob{A_u\mid A_v}-\prob{A_u})
\quad \text{ and } \quad
D_{v,T} := \sum_u(\prob{A_u\mid B_{v,T}}-\prob{A_u}), \\
& \text{we have} \quad \Var(X)=\sum_v \prob{A_v} D_v 
\quad \text{ and } \quad
D_v = \sum_u \sum_T \prob{B_{v,T}\mid A_v} D_{v,T}. 
\end{align*} 
Thus $\Var(X)/\mean{X}$ is a convex combination of the $D_{v,T}$,
so it suffices to show that each $D_{v,T} = o(\mean{X})$.
In fact, we will show $D_{v,T}=O(\log^3 n)$.

To bound $D_{v,T}$, note that starting from the unconditional process, 
setting the number of particles following $T$ to $1$, setting the number 
following any trajectory $T'$ meeting $T$ to $0$, and leaving all other trajectories unchanged,
we obtain a process with the same law as  the process conditioned on $B_{v,T}$.
 
Now, in order for some $A_u$ to occur after these changes but not before, 
the unconditioned process must have an occupied trajectory $T'$ starting at $u$,
such that $T'$ meets some trajectory $T''$ occupied in the unconditioned process 
but not after the changes, due to $T''$ meeting $T$.
Thus $D_{v,T}$ is bounded by the sum over  $u$ of the probability that 
some such pair $(T',T'')$ has particles in both trajectories,
which is the sum over such pairs of $O(w_{T'}w_{T''})$.

To estimate this sum we introduce some further notation.
We call a sequence $S$ of positions $v^{(S)}_i,\ldots,v^{(S)}_j$ with $0<i<j$
a \textit{subtrajectory} and say that a trajectory $T$ is \emph{consistent} with $S$
if $v^{(T)}_t=v^{(S)}_t$ for all $i\leq t\leq j$. 
We write $w_S$ for the total weight of all trajectories consistent with $S$.
Also, we write $w^*_S = w_S (1-e^{-1/n}) \sim w_S/n$ 
for the total weight of all trajectories consistent with $S$ that move in the time interval $[i-1,i]$.
	
We consider various cases for pairs $(T',T'')$ as above.
Suppose first that (a) $T''$ meets $T'$ before it meets $T$,
and (b) $T'$ moves in the time interval immediately before it first meets $T''$.
Let $t$ be the time when $T'$ and $T''$ first meet, let $S'$ be the 
subtrajectory of $T'$ from its start at $u$ up to time $t$,
and let $S''$ be the subtrajectory of $T''$ from $t$ up
to its first meeting with $T$. We can bound the sum over such $(T',T'')$
of $w_{T'}w_{T''}$ by the sum over such $(S',S'')$ of $w_{S'}w_{S''}$.
Now $w_{S'}w_{S''} = 1.1w_S$, where $S = S' \circ S''$ is some subtrajectory
from $u$ to some vertex $x$ visited by $T$ of some length $k\leq 2\log n$.
Note that $S$ moves in the interval $[t-1,t]$ by (b), so there are $O(\log n)$ choices for $t$.
There are also $O(\log n)$ choices for each of $x$ and $k$,
so the total of such $w_{T'}w_{T''}$ is $O(\log^3 n)$.

The remaining cases for $(T',T'')$ are similar and also contribute $O(\log^3 n)$.
Indeed, suppose next that again (a) $T''$ meets $T'$ before it meets $T$,
but now (b') $T'$ moves in the time interval immediately before it first meets $T''$.
Now there are $O(n\log n)$ choices for $t$ (defined as before), but we sum over 
$w_{S'}w^*_{S''} \sim 1.1w_S/n$, so the bound is again $O(\log^3 n)$.
The final case is (a') $T''$ meets $T$ before it meets $T'$.
Here we consider $S = S' \circ S''$ where 
$S'$ is the subtrajectory of $T'$ from $u$ to its first meeting with $T''$
and $S''$ is the reverse of the subtrajectory of $T''$
from its first meeting with $T$ to its first meeting with $T'$.
This case is simpler than (a), as $S$ must move in the interval preceding 
the meeting of $T'$ and $T''$, and again the bound is $O(\log^3 n)$.

In conclusion, $D_{v,T}=O(\log^3 n)$, 
so $\Var(X)=\widetilde{O}(\mean{X})=o(\mean{X}^2)$, as required.
\end{proof}

\section{Hitting times and toppling} \label{sec:topple}

In this section we analyse two problems on annihilation: 
the worst-case expected extinction time (Proposition \ref{prop:worst}) 
and the particle-hole model (Theorem \ref{thm:p=0} on stationary reds). 
Both results use results on hitting times: 
for the former we use a result of Coppersmith, Tetali and Winkler \cite{CTW}
on meeting times for adversarially controlled random walks;
for the latter we prove a lower bound 
on the hitting time of a small set from a random position.
For the particle-hole model, we also need an interpretation of the model 
as an `abelian sandpile', meaning that the order in which blues are `toppled' does not matter,
and to devise a method for overcoming dependencies between their initial positions.

For our bounds on the worst-case expected extinction time,
we consider adversarial meeting times, defined as follows.
Let $G$ be a graph. Start with two particles, one at some vertex $x$
and another at some vertex $y$. A \emph{strategy} $S$ is a possibly random rule
that decides for each possible history of the process which particle 
should take a random walk step. The meeting time $M_S(x,y)$
is the expected number of steps until the tokens meet.
We use the following consequence of \cite[Theorem 2]{CTW}:
\begin{equation} \label{eq:ctw}
M_S(x,y) \le 2H_{\max}(G).
\end{equation}

\begin{proof}[Proof of Proposition \ref{prop:worst}]
Fix any graph $G$. For the lower bound, we consider $p=0$, \ie stationary reds.
Let $x$ and $y$ be vertices such that $H_{\max}(G)=H_x(y)$.
We consider the starting configuration with 
$k$ blue particles on $x$ and $k$ red particles on $y$. 
Then the extinction time is the sum of $k$ variables 
each with expectation $H_x(y)$, so has expectation $k H_{\max}(G)$.

For the upper bound, we consider an arbitrary partition
of the particles into $k$ oppositely coloured pairs.
For each pair, we count the total number of steps taken by that pair 
until at least one of them is destroyed. This is bounded by a meeting time $M_S(x,y)$,
where one particle starts at $x$ and one at $y$, with the strategy that at each time step
red moves with probability $p$ and blue moves with probability $1-p$.
By \eqref{eq:ctw} we have $M_S(x,y) \le 2H_{\max}(G)$.
After each annihilation, if this involves the destruction of two particles from different pairs 
then we create a new pair consisting of the two remaining particles from the previous pairs. 

Since the number of particles decreases by $2$ each time a new pair is created, 
at most $k-1$ new pairs are created, and so there are at most $2k-1$ pairs in total. 
Each step in the process is counted by some pair,
and the expected number of steps taken by each pair is at most $2H_{\max}(G)$,
so the  expected extinction time is less than $4kH_{\max}(G)$.
\end{proof}

Now we consider the case of stationary reds, i.e~$p=0$.
The abelian property for this case is discussed in \cite{CRS14}, 
using an alternative `site-wise randomness' construction of the process. 
Each vertex is first equipped with a fixed list of instructions,
where each instruction moves a particle to some given adjacent vertex,
and each instruction is sampled independently at random.
We `topple' a site by performing the first unused instruction in its list. 
A sequence $\alpha$ of sites is a legal toppling sequence
if each instruction can be implemented,
\ie it topples a site with at least one blue particle.
A toppling sequence reaches equilibrium if it removes all blue particles. 
For a vertex $v$, we write $m_\alpha(v)$ for the number of times $v$ appears in $\alpha$. 
The following is rephrased from \cite[Lemma 1]{CRS14}; for further details see \cite[Section 3]{RS12}.
\begin{lem}\label{lem:abelian}
Fix an initial configuration. If $\alpha$ and $\beta$ are both legal toppling sequences 
that reach equilibrium then $m_\alpha(v)=m_\beta(v)$ for every vertex $v$.
\end{lem}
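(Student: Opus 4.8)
The plan is to deduce Lemma \ref{lem:abelian} from the stronger \emph{least-action} property: if $\alpha$ is a legal toppling sequence that reaches equilibrium and $\beta$ is \emph{any} legal toppling sequence (whether or not it reaches equilibrium), then $m_\beta(v)\le m_\alpha(v)$ for every vertex $v$. Granting this, the lemma is immediate, since if $\alpha$ and $\beta$ both reach equilibrium we may apply the property twice with the roles reversed to get $m_\beta\le m_\alpha$ and $m_\alpha\le m_\beta$ pointwise, hence equality. So the entire content lies in the one-sided domination, which I would prove by induction on the length $m$ of $\beta$, the case $m=0$ being trivial.

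The engine of the argument is a local \emph{exchange lemma}: if from some reachable state both sites $s$ and $t$ (with $s\neq t$) can be legally toppled, then toppling $s$ and then $t$ is legal and yields exactly the same state -- the particle configuration together with the used-instruction counts at every vertex -- as toppling $t$ and then $s$. The reason this survives annihilation is that each toppling acts only through the next instruction at its own site, and its effect on the particle counts is additive: it removes one blue from the toppled site and either deposits a blue at the target or cancels a red there. Since reds are stationary, toppling $s$ never removes a blue from $t$, and an incoming blue at $t$ can only annihilate a red, leaving any blue already present at $t$ untouched; hence $t$ stays legal after toppling $s$, and symmetrically. Because the two topplings consume disjoint instructions and contribute independently to each vertex's net blue/red balance, the final state is order-independent. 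I expect this to be the main obstacle: one must verify that annihilation, which looks like a nonlinear cancellation, genuinely commutes, and the crucial point is precisely that the stationarity of reds makes the net change at each vertex a sum of order-independent contributions.

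For the inductive step, write $\beta=(b_1,\dots,b_m)$. Since $b_1$ is legal initially it carries a blue, and because reds never move this blue can leave $b_1$ only via a toppling of $b_1$ itself; as $\alpha$ removes all blues, $\alpha$ must topple $b_1$, so $m_\alpha(b_1)\ge 1$. Let $a_i$ be the first occurrence of $b_1$ in $\alpha$. None of $a_1,\dots,a_{i-1}$ equals $b_1$, so $b_1$ retains a blue and stays legal along the whole prefix; since that prefix is left unchanged when we commute $a_i$ leftward, at each swap the unchanged prefix reaches the same intermediate state, the site being passed is legal there (from the original run) and $b_1$ is legal there (it retains a blue), so the exchange lemma applies. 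Commuting $a_i$ all the way to the front produces a legal sequence $(b_1,\gamma)$ reaching the same equilibrium, where $\gamma$ topples each site exactly as often as $\alpha$ except one fewer time at $b_1$. Writing $C_1$ for the state after toppling $b_1$, the tail $\gamma$ is a legal equilibrium-reaching sequence from $C_1$, while $\beta':=(b_2,\dots,b_m)$ is a legal sequence of length $m-1$ from $C_1$. The induction hypothesis gives $m_{\beta'}(v)\le m_\gamma(v)$ for all $v$; adding one at the coordinate $b_1$ to both sides yields $m_\beta(v)\le m_\alpha(v)$ for all $v$, completing the induction and hence the proof.
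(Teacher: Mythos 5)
Your proof is correct. Note, however, that the paper does not actually prove Lemma \ref{lem:abelian}: it is quoted from \cite[Lemma 1]{CRS14}, with details deferred to \cite[Section 3]{RS12}, so there is no in-paper argument to compare against. What you have written is a self-contained reconstruction of the standard abelian-property argument from that literature (the Diaconis--Fulton/least-action scheme): a local exchange lemma showing that two legally toppleable sites commute, the observation that any equilibrium-reaching sequence must topple the first site of any other legal sequence (since a blue at a site can only leave by a toppling of that site), and an induction that transfers one toppling at a time. Your treatment correctly isolates the one model-specific point, namely that annihilation does not break commutativity or legality because reds are stationary and an incoming blue can only cancel a red, never a blue already in place -- this is exactly the feature that makes the particle-hole model abelian, and it is where a careless adaptation of the classical sandpile argument could go wrong. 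One presentational remark: for the inductive step to apply at the intermediate state $C_1$ (which has some instructions already consumed), the least-action statement you induct on should be quantified over arbitrary states (particle configuration together with instruction pointers), not just over the initial configurations appearing in the lemma; your argument implicitly does this when it invokes the hypothesis from $C_1$, but it is worth stating explicitly. In summary, your approach buys self-containedness and a precise identification of where stationarity of reds is used, while the paper's citation buys brevity.
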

In particular, the total number of movements does not depend on the order in which particles move, 
and so it suffices to prove the lower bound for some choice of toppling sequence.

We also require the following lower bound on hitting times. 

\begin{lem}\label{lem:hit-from-random}
Suppose $A$ and $B$ are disjoint non-empty vertex sets in a regular graph $G$. 
Let $H_A(B)$ be the first time a random walk, started from a uniformly random vertex of $A$, reaches a vertex of $B$. 
Then $\probb{H_A\geq \frac{|A|}{2|B|}}\geq 1/2$ and $\mean{H_A(B)}\geq \frac{|A|+|B|}{2|B|}$.
\end{lem}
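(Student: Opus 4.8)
The plan is to reduce both statements to a single estimate: for the walk $X_0,X_1,\ldots$ started from the uniform distribution on $A$, the chance of sitting in $B$ at any fixed time is at most $|B|/|A|$. Write $a=|A|$ and $b=|B|$. Since $A$ and $B$ are disjoint we have $H_A(B)\ge 1$ deterministically, and a union bound over times (or, equivalently, Markov's inequality applied to the count of visits to $B$) gives
\[
\prob{H_A(B)\le t}\;\le\;\sum_{s=1}^{t}\prob{X_s\in B}.
\]
So once I know $\prob{X_s\in B}\le b/a$ for every $s$, I immediately get $\prob{H_A(B)\le t}\le tb/a$, and both conclusions will follow by elementary manipulations.

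The crux is the estimate $\prob{X_s\in B}\le b/a$, which I would obtain from reversibility. Because $G$ is regular, the $s$-step transition matrix is symmetric, $P^s_{uv}=P^s_{vu}$ for all $u,v$. Starting from the uniform distribution on $A$ and summing over $B$,
\[
\prob{X_s\in B}=\frac1a\sum_{u\in A}\sum_{v\in B}P^s_{uv}
=\frac1a\sum_{v\in B}\sum_{u\in A}P^s_{vu}
=\frac1a\sum_{v\in B}\prob{X_s\in A\mid X_0=v}\le\frac{b}{a},
\]
using only that each inner probability is at most $1$. This swap of roles between $A$ and $B$, valid precisely because the walk is reversible with uniform stationary measure on a regular graph, is the one genuinely non-routine step; everything else is bookkeeping.

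With $\prob{H_A(B)\le t}\le tb/a$ in hand, for the probability bound I would take $t^\ast=\ceil{a/(2b)}-1<a/(2b)$, which gives $\prob{H_A(B)<a/(2b)}\le t^\ast b/a<1/2$, i.e.\ $\probb{H_A(B)\ge a/(2b)}\ge 1/2$. For the expectation I would use $\mean{H_A(B)}=\sum_{t\ge0}\prob{H_A(B)>t}\ge\sum_{t\ge0}\max\!\big(0,\,1-tb/a\big)$ and evaluate the right-hand sum. The only point needing a little care here is that the naive integral comparison only yields $a/(2b)$, whereas the claimed constant is $\tfrac{a+b}{2b}=\tfrac{a}{2b}+\tfrac12$; the extra $\tfrac12$ comes from retaining the full $t=0$ term (worth $1$) rather than absorbing it into the integral. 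Concretely the finite sum $\sum_{t=0}^{\floor{a/b}}(1-tb/a)$ equals $\tfrac{a+b}{2b}$ exactly when $a/b$ is an integer, and a short convexity check (the relevant discrepancy function vanishes at consecutive integers and is positive in between) shows it is never smaller, so I expect this elementary but slightly fiddly evaluation to be the main remaining obstacle after the reversibility estimate is established.
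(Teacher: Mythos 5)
Your proposal is correct and follows essentially the same route as the paper: both reduce everything to the bound $\prob{H_A(B)\le t}\le t|B|/|A|$, then take $t$ just below $|A|/(2|B|)$ for the probability statement and sum $\prob{H_A(B)>t}\ge\max(0,1-t|B|/|A|)$ for the expectation, with the same $x(1-x)$-type slack showing the sum is never below $(|A|+|B|)/(2|B|)$. The only difference is how the key bound is derived — the paper counts expected visits to $B$ by a stationary walk on all of $G$ and conditions on $X_0\in A$, whereas you bound $\prob{X_s\in B}\le|B|/|A|$ at each step via symmetry of $P^s$ and apply a union bound — two equivalent exploitations of the uniform measure being stationary on a regular graph.
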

\begin{proof}
Consider a stationary random walk $X_0,X_1,\ldots$ on $G$,
\ie a random walk starting from a uniformly random vertex,
which is therefore at a uniformly random vertex at each future step.
For any positive integer $k$, the expected number of visits to $B$
by $X_1,\dots,X_k$ is $k|B|/|V(G)|$. On the other hand,
this is at least the probability that there is some such visit,
which is at least  $\prob{X_0\in A}\prob{H_A(B)\leq k}$,
so we deduce $\prob{H_A(B)\leq k} \le k|B|/|A|$.

Taking  $k=\floorfrac{|A|}{2|B|}$ proves the first statement.

For the second statement, noting that 
$\prob{H_A(B)=0}=0$ as $A \cap B = \varnothing$, we have
\begin{align*}\mean{H_A(B)}&=\sum_{k\geq 0}\prob{H_A(B)>k}
\geq\sum_{k=0}^{\floor{|A|/|B|}}(1-k|B|/|A|)\\
&=(1+\floor{|A|/|B|})(2-\floor{|A|/|B|}|B|/|A|)/2.\end{align*}
Writing $x = |A|/|B|-\floor{|A|/|B|} \in [0,1)$ we calculate
\[ 2|B|\mean{H_A(B)} \ge (|A|+|B|-x|B|)(1+x|B|/|A|) 
= |A|+|B| + x(1-x)|B|^2/|A|. \]
This gives the required bound.
\end{proof}

We will conclude this section by proving Theorem \ref{thm:p=0} on stationary reds.
To illustrate the idea of the proof, one can consider a variant model
where the reds are positioned at some arbitrary set $S$ of $n/2$ vertices
and the blues each independently start at a random vertex not in $S$.
We can construct a toppling sequence by considering each blue particle 
in turn and revealing its starting location and random walk steps until it hits a red
particle not hit by a previous blue. The expected extinction time is thus the sum 
of hitting times $H_A(B)$ as in Lemma \ref{lem:hit-from-random},
with $|A|=n/2$ and $|B|=n/2,n/2-1,\dots,1$, which is $\Omega(n\log n)$.
For the actual model where we start one blue from each vertex not in $S$
we will need a further device to overcome dependencies.

\begin{proof}[Proof of Theorem \ref{thm:p=0}]
We show how to randomly construct a legal toppling sequence with at least the required  length, 
which is sufficient by Lemma \ref{lem:abelian}. For ease of writing, we assume that $\floor{n/2}$ is even.

First we reveal the locations of half the blues, and the empty vertex if $n$ is odd, giving a set $S_1$ with $|S_1|=\ceil{n/4}$. 
We build up a set $S_2$, initially empty, as follows. We consider each blue particle in $S_1$ in turn and reveal its random
walk steps until it  reaches $V(G)\setminus(S_1\cup S_2)$, then we add its current location to $S_2$.
This gives a legal sequence, since every vertex in $S_1$ either has one blue or is unoccupied, 
and every vertex in $S_2$ either has two blues (if the particle originally there was blue) 
or is unoccupied (if the particle originally there was red, and was annihilated by the blue that reached it).
After these steps we have a partition $V(G)=S_1\cup S_2\cup U$, 
where $S_2\cup U$ originally contained $\floor{n/4}$ blue and $\floor{n/2}$ red particles, 
$|S_2|=\floor{n/4}$ and $|U|=\floor{n/2}$. 

Now we reveal the number $k$ of blue particles that started in $S_2$,
thus also revealing that $\floor{n/4}-k$ blue particles started in $U$,
$\floor{n/4}-k$ red particles started in $S_2$ and $\floor{n/2}-\floor{n/4}+k$ red particles started in $U$.
We note that the above construction of $S_2$ was independent of the initial distribution of particles in $S_2\cup U$, 
so with high probability we have $k=(1/12+o(1))n$, and given the value of $k$, the vertices of $S_2$ that initially contained blue particles 
form a uniformly random $k$-subset of $S_2$. After following the toppling sequence above, each of these $k$ vertices 
contains two blue particles, and the remaining vertices in $S_2$ are empty. 

Next we reveal the colours of all particles in $U$ and for each blue particle in $U$ in turn, move that particle until it hits a red particle in $U$.
This gives a legal sequence because $U$ contains more red than blue particles and 
every particle in $S_1\cup S_2$ is either unoccupied or contains two blue particles.
We also note that this process of eliminating particles in $U$ is independent of the locations of the $k$ vertices with two blue particles. 
After this toppling sequence, we have a partition $S_1\cup S_2\cup U_1\cup U_2$ where $S_1\cup U_1$ is unoccupied, 
$|S_2|=\floor{n/4}$ and $k$ uniformly random vertices of $S_2$ have two blue particles each, 
and $|U_2|=2k$ with one red particle at each vertex in $U_2$.

Now, as in the illustration before the proof, we repeat the following procedure.
At each step, we reveal an occupied vertex $v$ in $S_2$ uniformly at random,
move one blue particle at $v$ until it hits a red particle not hit by a previous blue,
then do the same for the other blue particle at $v$. We remove $v$ from $S_2$ and
the two hit vertices from $U_2$, then continue to the next step.

Note that at each step we have $|S_2|>n/4-k=(1-o(1))n/6$. 
As $v$ is uniform in $S_2$, by Lemma \ref{lem:hit-from-random}
both of the blue particles at $v$ hit $U_2$
in expected time at least $|U_2|^{-1} (1-o(1))n/12$.
By linearity of expectation, we obtain  
$\mean{T}\geq(1-o(1))n\sum_{i=1}^{n/12}(12i)^{-1}>0.08n\log n$ for $n$ sufficiently large. 

Furthermore, by Lemma \ref{lem:hit-from-random}, the hitting time for the first particle of a pair 
is at least $(1-o(1))n|U_2|^{-1}/12$ with probability at least $1/2$, 
independently of earlier hitting times, so we can write $T \ge (1-o(1))n\Sigma$,
where $\Sigma = \sum_{i=1}^{n/12}X_i(24i)^{-1}$ with the $X_i$ being 
independent Bernoulli random variables with parameter $1/2$. 
Now $\Sigma$ has expectation $\frac{1}{24}\log n-O(1)$ and variance $O(1)$,
so by Chebyshev's inequality with high probability  $T>0.04n\log n$.
\end{proof}

\section{Few predators} \label{sec:predator}

We conclude this part of the paper by proving Theorem \ref{thm:predator}
on the predator-prey model with few predators.
 
\begin{proof}[Proof of Theorem \ref{thm:predator}]
We start with the lower bound. Similarly to the proof of Theorem \ref{thm:persistent},
we will consider a parallel system of non-interacting `fake' particles,
where the fake particles start from, and hence remain at, independent uniformly random positions.
We will have $2k$ fake predator particles and $n-k$ fake prey particles,
so that with high probability we can couple the starting distributions of true and fake particles
via a partial pairing between particles of the same types in the same location,
so that every true predator particle is paired with some fake predator particle
and $n/2$ true prey particles are each paired with fake prey particles. Here it is necessary to have significantly more fake particles of each type than the number of true particles we intend to pair, in order that the pairing succeeds with high probability despite the different starting distributions of true and fake particles. For the lower bound, we need all true predator particles to be paired, so that unpaired particles cannot destroy paired particles, but we do not need all prey particles to be paired.
The predator pairs remain paired throughout the process, whereas a fake prey particle
becomes unpaired if its paired true particle is eliminated.
In each step, we select a particle to take a random walk step 
with probability proportional to its speed,
where we simultaneously consider true and fake particles, 
but count paired particles as a single particle.
If a pair is selected then they take the same random walk step in both systems.
If an unpaired particle from one system is selected then the other system ignores this step.

We note that a paired true prey particle is destroyed only if its paired fake prey particle
meets a paired fake predator particle (since all true predator particles are paired).
We define the `fake time' at any step as the total number of movements by fake particles
and let $X_t$ be the number of meetings between opposite-type fake particles by fake time $t$.
Writing $s_A,s_B$ for the speeds of predators and prey respectively, where $s_A+s_B=1$, 
at any given step when a fake particle moves, 
as fake particles are independent and uniformly distributed, 
the probability of $X_t$ increasing is at most
\[\frac{ks_A\frac{n-k}{n}+(n-k)s_B\frac{k}{n}}{ks_A+(n-k)s_B}
= \frac{k(n-k)/n}{ks_A+(n-k)s_B} \leq\frac{k}{ns_B}.\]

Let $\tau$ be the stopping time with $X_\tau = n/4$.
Note that the true process has not yet reached equilibrium,
as at least $n/4$ paired true prey particles have not been eliminated.
As $\mean{X_t} \le \frac{kt}{ns_B}$, we have
$\prob{\tau \le t} \le \prob{X_t \ge n/4} \le \frac{4kt}{n^2 s_B}$ by Markov's inequality, so
\[\mean{\tau}=\sum_{t\geq 0}\prob{\tau>t}
\geq \sum_{t=0}^{\floor{n^2s_B/4k}}\left(1-\frac{4tk}{n^2s_B}\right)
\geq \frac{n^2s_B}{8k}.\]

Write $M_t$ be the number of movements by true particles by fake time $t$.
Then $M_\tau$ is a lower bound on the equilibrium time.
To relate $M_\tau$ to $\tau$, we note that for any $t<\tau$, with probability at least $1/4$
the fake movement at fake time $t$ is paired with a corresponding true movement.
Indeed, there are $k$ paired predator particles and at least $n/4$ paired fake prey particles,
so the required probability is at least $\frac{ks_A  + s_B n/4}{2ks_A + s_B (n-k)} \ge 1/4$.
It follows that $M_t-t/4$ is a submartingale for $t\leq \tau$, 
so $\mean{M_\tau}\geq \mean{\tau}/4 =\Omega(n^2/k)$.
 
Now we prove the upper bound, arguing similarly to the proof of Theorem \ref{thm:persistent}.  
We divide into rounds, where within each round
with high probability at least half of the remaining prey particles are destroyed.
At the start of each round we couple to a system of fake non-interacting particles,
now starting each round with a bijective pairing as in the proof of Theorem \ref{thm:persistent}.
Again, predator particles remain paired, but prey particles can become unpaired.
The true and fake populations differ by at most a factor of two 
before half of the  remaining prey particles are destroyed.

In each round $i$, writing $m_i$ for the number of remaining particles, we consider a mixing phase 
of $O(m_i \log n)$ steps and then $O(m_in/k)$ additional steps for destroying prey particles.
Continuing to follow the proof of Theorem \ref{thm:persistent}, 
with high probability each fake particle takes $\Theta(\log n)$ steps in the mixing phase,
and conditional on the initial locations of the fake particles, 
and the number of random walk steps each has taken, all being $\Theta(\log n)$,
their locations are independent random variables
and approximately uniform in the sense of \eqref{prob-bounds}. 

We claim that with high probability
at each time $t$ in round $i$ with $\Omega(m_i \log n) = t = O(n^2)$,
the predator particles occupy at least $k/4$ positions.
Indeed, if this fails at some time $t$ then when revealing the positions of predator particles 
one by one there are at least $3k/4$ times when we reveal a particle in one of at most $k/4$ occupied positions.
Any particle has probability at most $2/n$ of being in some given position,
so this event has probability at most $2^k(k/2n)^{3k/4} = o(n^{-2})$ for $k \ge 4$.
The claim follows by a union bound.

Now we apply Lemma \ref{lem:hitting} to each prey particle $p$ in turn,
with the moving target sets $A_i$ corresponding to $k/4$ locations of predator particles.
For a suitable choice of constants, with high probability each such $p$ takes at least $12rn/k$ steps,
so by Lemma \ref{lem:hitting} hits a predator particle with probability at least $2/3$, say,
so with high probability at least half the prey particles are destroyed in this round. 
We may rerun any unsuccessful rounds without changing the expected time by more than a $1+o(1)$ factor.
Since $m_i\leq 2^{1-i}(n-k)+k$ and $O(\log n)$ rounds are needed, 
the total expected time is $O(n\log n+k\log^2 n)=O(n\log n)=O(n^2/k)$ for mixing 
and $O(n^2/k+n\log n)=O(n^2/k)$ for hitting, as required.
\end{proof}

\newpage

\part{Annihilation}

In this second part of the paper we focus on the balanced two-type annihilation model,
for which the lack of persistence poses several additional challenges 
not seen when analysing the models in the first part of the paper.
Our main result here is Theorem \ref{thm:main}. We divide the proof into two sections,
presenting the upper bound in Section \ref{sec:upper} and the lower bound in Section \ref{sec:lower}. 

\section{Annihilation upper bound}\label{sec:upper}

In this section we prove the upper bound for Theorem \ref{thm:main}. 
While our main interest is in the two-type model, our proof also applies to the one-type model,
and sometimes the one-type model is useful for giving a simpler illustration of the proof ideas
(the main simplification is that here all particles move at the same speed).
We deduce it from the following stronger result, where we do not need any specific lower bound on the spectral gap, 
but get a constant that depends on the spectral gap. 

\begin{thm}\label{thm:expander}
For any $\mu>0$ there is $C=C(\mu)>0$ such that the following holds.
Let $G$ be a regular graph on $n$ vertices with $1-\mu_2(G)>\mu$. 
Consider one-type annihilation or balanced two-type annihilation with arbitrary speeds
from an arbitrary valid initial configuration. Let $T$ be the extinction time.
Then $T \le Cn\log n$ with high probability and in expectation.
\end{thm}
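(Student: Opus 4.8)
The plan is to prove the upper bound on the extinction time $T$ for balanced two-type annihilation (the one-type case being a simplification) by adapting the coupling-and-phases strategy of Theorem~\ref{thm:persistent}, but confronting head-on the central obstacle that the total number of particles now decays geometrically rather than staying $\Theta(n)$. Since persistence fails, I would process the decay in $O(\log n)$ \emph{rounds}, where in round $i$ I start with roughly $m_i$ particles of each type and aim to show that, with high probability and in expectation, in time $O(m_i \log n)$ at least a constant fraction of the particles are annihilated, so that $m_{i+1} \le (1-\delta) m_i$ for some fixed $\delta>0$. Summing the geometric series $\sum_i O(m_i \log n)$ with $m_i \le 2^{-\Omega(i)} n$ then yields the desired $O(n\log n)$ total, since $\sum_i m_i = O(n)$; the extra factor of $\log n$ comes from a single mixing phase per round. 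The dissipative/abelian nature of annihilation ensures the number of annihilations is order-preserving in the relevant sense, and the laziness trick from the Notation section lets me replace walks by lazy walks at the cost of a constant factor.

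The core of each round is a coupling to a system of non-interacting \emph{fake} particles, exactly as in the proof of Theorem~\ref{thm:persistent}: at the start of the round I pair each true particle with a fake particle in the same location, run a mixing phase of $K m_i \log n$ steps so that (by \eqref{prob-bounds} and a Chernoff bound) each surviving fake particle has taken $\Theta(\log n)$ steps and is approximately uniformly and independently distributed, and then apply the Moving Target Lemma (Lemma~\ref{lem:hitting}) to drive annihilations. Concretely, fix a red particle $p$; the target sets $A_i$ are the vertices occupied by (paired, fake) blue particles, of which there are $\Theta(m_i)$ occupying $\Omega(m_i)$ distinct vertices with high probability (the anticoncentration step, proved by the same union-bound-over-subsets calculation as in Theorem~\ref{thm:persistent}, showing $\eps m_i$ particles occupy $\eta m_i$ vertices). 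Taking $k = \Theta(m_i)$ in Lemma~\ref{lem:hitting} and running $p$ for $\Theta((n/m_i)\log n)$ of its own steps gives that $p$ hits a blue target, hence annihilates, with probability $1-o(1)$; a union bound over the $\Theta(m_i)$ red particles shows a constant fraction are annihilated per round. Because the fake and true populations stay within a constant factor of each other \emph{until} a constant fraction is destroyed, the true-time/fake-time conversion stays comparable within the round, which is precisely why I must re-couple at the start of each round rather than run one global coupling.

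The main obstacle, and where the balanced case genuinely departs from the agential setting, is the \emph{approximate stationarity hypothesis} \eqref{init-dist} needed to invoke Lemma~\ref{lem:hitting}: after annihilations the surviving particles are a biased subsample and their conditional distribution need not satisfy the $L^2$-smallness bound required of the initial distribution. In the persistent setting one always has $\Theta(n)$ genuinely-stationary agents to hit, but here both types are ephemeral and thinning. My approach is to absorb this into the per-round structure: the mixing phase of length $\Theta(m_i \log n)$ re-establishes \eqref{prob-bounds}-type near-uniformity for each fake particle conditional only on its step count, and the fake particles (being non-interacting) are exactly independent given those counts, so the hypothesis \eqref{init-dist} with $k=\Theta(m_i)$ holds for the fakes at the end of mixing. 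Since the Moving Target Lemma only needs the \emph{walker} $p$ to be approximately stationary and the targets to be deterministic after revealing everything else (as in the Remark following Lemma~\ref{lem:hitting}), I reveal all movements and reactions not involving $p$, leaving $p$'s lazy walk as the only randomness and its distribution controlled by the mixing phase. The delicate bookkeeping — ensuring the speed differential between red and blue does not break the true/fake time comparison, and handling the final regimes where $m_i$ is small and only the faster type can be guaranteed mixed — is what the full proof in Section~\ref{sec:upper} must address, but the skeleton above reduces everything to $O(\log n)$ applications of Lemma~\ref{lem:hitting} with a geometrically shrinking target size.
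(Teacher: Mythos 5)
Your skeleton (rounds of geometric decay, re-coupling true particles to fake non-interacting particles at the start of each round, a mixing phase followed by applications of the Moving Target Lemma, and deferring the speed-differential bookkeeping) matches the structure of the paper's proof in Section~\ref{sec:upper}. But the core step inside each round --- ``fix a red particle $p$, show it hits a fake-blue-occupied target with probability $1-o(1)$, then union bound over the $\Theta(m_i)$ reds to conclude a constant fraction are annihilated'' --- has a genuine gap, and it is precisely the point where balanced annihilation departs from the agential setting of Theorem~\ref{thm:persistent}. A meeting of fake red $r$ with fake blue $b$ only implies that \emph{at least one} of the two corresponding true particles has been destroyed (this is Lemma~\ref{lem:non-interacting}): if the true $b$ was already annihilated by some other red, the true $r$ may still be alive. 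Hence the set of destroyed true particles is only guaranteed to be a vertex cover of the bipartite ``fake meeting'' graph, so the number of true annihilations you can certify is the size of a maximum \emph{matching} in that graph, not the number of reds having at least one meeting. Your union bound is consistent with the scenario in which essentially all fake reds meet the same few fake blues, which certifies only $O(1)$ true annihilations. In the persistent setting this issue does not arise because the targets are paired persistent particles which remain $\Theta(n)$ in number throughout; here the targets are ephemeral and disappear as they are hit, which is exactly why the paper says the ``simple sequential argument'' from the predator-prey proof cannot be reused.

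The paper's fix is a Hall-type matching argument (Theorem~\ref{dhall} together with Lemma~\ref{lem:halls}): one shows that with probability $1-0.8^{\Omega(k)}$ there is \emph{no} pair of sets $A$ of $k/2$ reds and $B$ of $k/2$ blues with no meeting between them; the deficiency form of Hall's theorem then produces $k/2$ disjoint meeting pairs, and Lemma~\ref{lem:non-interacting} converts these into $\Omega(k)$ true annihilations. Making this work needs the Moving Target Lemma with failure probability exponentially small in $k$ (to beat the $4^k$ union bound over choices of $A$ and $B$), not merely $1-o(1)$ per particle, and it also needs an anticoncentration statement for the slow \emph{red} particles --- condition (ii) of Lemma~\ref{lem:halls} --- which in the intermediate speed regime cannot come from mixing and is instead supplied by the multiplicity bound of Lemma~\ref{lem:poisson2}. (A secondary point: you have the regimes backwards --- it is in the \emph{dense} regime, $k$ large compared with $pn\log n$, that only the fast blues can be mixed; once $m_i \le pn$ one can afford to mix both colours.) Without the matching idea, or some substitute that assigns distinct blue targets to distinct reds, your per-round conclusion does not follow.
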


It suffices to focus on the high probability statement in Theorem \ref{thm:expander}.
Indeed, by increasing $C$ we will see that we can get any polynomial failure probability.
To deduce the expectation statement  in Theorem \ref{thm:expander}, we combine this
with the bound from Proposition \ref{prop:worst} that the expected extinction time from any
configuration is  $O(nH_{\max}(G)) = O(n^2)$, as the maximum hitting time of a regular expander 
is $\Theta(n)$ (see e.g.\ \cite[Corollary 3.3]{Lov96}).

\subsection{A Hall matching argument}

Similarly to the proof of the upper bound in Theorem \ref{thm:predator},
the overall plan for the proof of the upper bound in Theorem \ref{thm:expander}
is to divide into rounds, where in each round $1/4$ of the remaining particles are destroyed.
Again we will use the Moving Target Lemma,
but we cannot use the simple sequential argument as for predator-prey,
as for annihilation the targets disappear as they are hit by other particles.

Instead, our plan will be to use a Hall matching argument to find a large collection
of (vertex-)disjoint pairs of oppositely coloured meeting fake particles, arguing that 
this implies that many annihilations happen in the true process.
Suppose there are $k$ particles of each type remaining. 
As before, we couple to a system of non-interacting fake particles,
where initially there is a bijective pairing of true and fake particles.
Then we have the following simple fact.
\begin{lem}\label{lem:non-interacting}
Suppose that at least $\ell$ disjoint pairs of oppositely coloured fake particles 
have met after $t$ fake steps. Then at least $\ell/2$ particles 
of each type have been destroyed after at most $t$ true steps.
\end{lem}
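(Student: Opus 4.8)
The plan is to exploit the defining feature of the coupling: a fake particle follows exactly the same trajectory as its paired true particle for as long as that true particle survives, and only diverges once the true particle is annihilated. I will therefore track, for the $2\ell$ fake particles making up the $\ell$ disjoint meeting pairs, their $2\ell$ true partners under the initial bijective pairing; since the pairs are disjoint these partners are distinct, comprising $\ell$ reds and $\ell$ blues. The goal is to show that in each meeting pair at least one true partner has been annihilated by the time the pair meets, and then to convert this into the claimed bound of $\ell/2$ destroyed particles of each colour.

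First I would analyse a single meeting pair $(f,g)$ of opposite colours, with true partners $t_f,t_g$, meeting at fake time $s\le t$. If either $t_f$ or $t_g$ has already been annihilated before fake time $s$, then the pair already contributes a destroyed partner. Otherwise both are still paired just before the meeting, so their true copies occupy exactly the fake positions of $f$ and $g$; the coupled step that realises the fake collision then moves the relevant true particle onto a vertex occupied by an opposite-coloured particle (its partner), which forces a true annihilation destroying at least the moving true particle. Either way the pair contributes at least one annihilated true partner, at or before the coupled step at which the meeting occurs.

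Next I would carry out the counting. Since the pairs are disjoint and the tracked partners are distinct, the previous step yields at least $\ell$ annihilations among the $2\ell$ tracked true particles; writing $R$ and $B$ for the numbers of destroyed tracked reds and blues, this reads $R+B\ge\ell$. The key arithmetic point is that in two-type annihilation every reaction removes exactly one red and one blue, so the total number $D$ of destroyed reds equals the total number of destroyed blues, and clearly $R\le D$ and $B\le D$. Hence $2D\ge R+B\ge\ell$, giving $D\ge\ell/2$; that is, at least $\ell/2$ particles of each colour have been destroyed.

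Finally I would control the timing. Because destroyed true particles never move again, the only unpaired particles that can be selected to move are fake ones, so every true step is simultaneously a fake step; consequently, at the coupled moment corresponding to fake time $t$, the true process has taken at most $t$ steps, and all the annihilations identified above occur at or before that moment. I expect the main obstacle to be the ``both still alive'' branch of the single-pair analysis: one must verify carefully that the coupled move realising a strong fake collision genuinely triggers a true annihilation — noting that the arriving true particle may annihilate an opposite-coloured particle other than its own partner, which is exactly why only one destruction per pair can be guaranteed — and that fake time dominates true time throughout the coupling.
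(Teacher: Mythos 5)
Your proof is correct and takes essentially the same approach as the paper's: one guaranteed destruction per disjoint pair via the coupling (using that fake time dominates true time), followed by the red--blue balance of annihilations to convert $\ell$ destroyed particles into $\ell/2$ of each colour. You are in fact more explicit than the paper on the two delicate points, namely that the arriving true particle may annihilate an opposite-coloured particle other than its partner, and the final counting argument $R+B\ge\ell$, $R\le D$, $B\le D$ giving $D\ge\ell/2$.
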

\begin{proof}
For any $i\leq t$, the first $i$ fake steps correspond to $j$ true steps for some $j\leq i$. 
If two fake particles of opposite colours reach the same vertex after $i$ fake steps, 
then either they annihilate one another within $j\leq i \le t$ true steps, 
or at least one of them has been destroyed before that point. 
Thus we can choose at least one particle from each pair which is destroyed within $t$ true steps.
\end{proof}

To find the disjoint pairs assumed in Lemma \ref{lem:non-interacting}, 
we will consider an auxiliary bipartite graph
where edges represent meeting pairs of oppositely coloured fake particles,
to which we will apply the following version of Hall's theorem
(see e.g. \cite[III.3, Corollary 9]{Bollobas}).

\begin{thm} \label{dhall}
Let $H$ be a bipartite graph with parts $A$ and $B$.
Suppose that any $S \subseteq A$ has at least $|S|-d$ neighbours in $B$.
Then $H$ contains at least $|A|-d$ disjoint edges.
\end{thm}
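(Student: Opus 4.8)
The final statement is Theorem~\ref{dhall}, a defect version of Hall's theorem: if $H$ is bipartite with parts $A,B$ and every $S\subseteq A$ satisfies $|N(S)|\ge |S|-d$, then $H$ contains at least $|A|-d$ disjoint edges (a matching saturating all but $d$ vertices of $A$). This is a standard corollary of Hall's marriage theorem, and the plan is to reduce it to the classical version rather than reprove matching theory from scratch.

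The cleanest approach is the \emph{dummy-vertices} reduction. First I would add $d$ new vertices to the side $B$, forming $B' = B \cup D$ with $|D| = d$, and join every vertex of $D$ to every vertex of $A$, obtaining an augmented bipartite graph $H'$ on parts $A$ and $B'$. The goal is to show $H'$ has a matching saturating $A$, i.e.\ a perfect matching on the $A$-side. To apply classical Hall to $H'$, I must verify Hall's condition there: for every $S\subseteq A$, its neighbourhood $N_{H'}(S)$ in $B'$ has size at least $|S|$. If $S$ is nonempty, then $D\subseteq N_{H'}(S)$ since every dummy vertex is adjacent to all of $A$, so $|N_{H'}(S)| = |N_H(S)| + d \ge (|S| - d) + d = |S|$, using the hypothesis on $H$; if $S$ is empty the condition is vacuous. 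Hence Hall's condition holds in $H'$.

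By the classical Hall marriage theorem, $H'$ therefore contains a matching $M$ that saturates $A$, consisting of $|A|$ disjoint edges. Each edge of $M$ either lies in the original graph $H$ (goes from $A$ to $B$) or is a dummy edge (goes from $A$ to $D$). Since $|D| = d$, at most $d$ edges of $M$ are dummy edges, because distinct matching edges use distinct endpoints in $D$. Discarding the dummy edges leaves at least $|A| - d$ disjoint edges all lying in $H$, which is exactly the claimed conclusion.

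I do not anticipate a genuine obstacle here, as the result is elementary once the augmentation is set up; the only care needed is in the bookkeeping of the two edge-cases (taking $S=\varnothing$, and confirming that distinct dummy edges occupy distinct dummy vertices so that no more than $d$ of them appear). One could alternatively argue directly via the deficiency form of Hall's theorem (the König--Ore defect formula, stating that the maximum matching size is $|A| - \max_{S\subseteq A}(|S| - |N(S)|)$), from which the bound is immediate since the hypothesis caps the deficiency $\max_S(|S|-|N(S)|)$ at $d$; but since the statement is quoted from \cite{Bollobas} the intended proof is presumably just this citation, and the dummy-vertex reduction is the self-contained way to supply it.
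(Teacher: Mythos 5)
Your proof is correct. The paper does not prove Theorem \ref{dhall} at all --- it simply cites it as \cite[III.3, Corollary 9]{Bollobas} --- and your dummy-vertex reduction (augmenting $B$ with $d$ vertices joined to all of $A$, applying classical Hall, and discarding the at most $d$ dummy edges) is the standard argument, essentially the one found in that reference, so your write-up correctly supplies the proof the paper leaves to the citation.
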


We now come to the main lemma of this subsection,
in which we apply Hall Matching and the Moving Target Lemma
to find disjoint pairs as in Lemma \ref{lem:non-interacting}, 
provided the red particles are sufficiently spread out 
and the blue particles move sufficiently many times.

\begin{lem}\label{lem:halls}
Let $G$ be a regular graph on $n$ vertices, $0 < c \le 1/4$
and $r,s,k$ be positive integers with $k \le n/2$ and $\mu_2(G)^{2r} < 1/17$.
Consider a system of at least $k$ red particles and at least $k$ blue particles
all following non-interacting lazy random walks on $G$, with one particle moving at each time step.
Suppose the initial positions $X_i^{(0)}$ of the blue particles are independent and satisfy
	\[\sum_{v\in V(G)}(\prob{X_i^{(0)}=v}-1/n)^2\leq \frac{ck}{4n^2}. \]
Then with probability at least $1-0.8^{sk/4}$ there are
	\begin{enumerate}[(i),nosep]
		\item at most $3k/4$ blue particles that move at least $6nrs/(ck)$ times, or
		\item at least $k/2$ red particles at some time occupying a set of fewer than $ck$ vertices, or
		\item at least $k/2$ meetings of disjoint pairs of oppositely coloured particles.
	\end{enumerate}
\end{lem}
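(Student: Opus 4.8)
The plan is to bound by $0.8^{sk/4}$ the probability that none of (i)--(iii) holds; since the conclusion only asks for meetings among \emph{some} $k$ red and $k$ blue particles, I fix such sets $R$ and $B_0$ at the outset and work only with them. First I would reveal two pieces of randomness: the sequence prescribing which particle moves at each step, and the full trajectories of all red particles. The former determines how many steps each blue takes, hence which blue particles are \emph{fast} (move at least $6nrs/(ck)$ times), and the latter determines whether (ii) holds; both events are therefore measurable with respect to this information. Crucially, conditioned on it the blue particles still perform \emph{independent} lazy random walks whose initial distributions are unaffected (being independent of the revealed randomness), and so still satisfy the displayed hypothesis, while every red position --- and hence every potential target set --- is now deterministic. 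It thus suffices to bound, on the event that (i) and (ii) both fail, the conditional probability that (iii) fails.

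So suppose (i) fails, fixing a set $A$ of $3k/4$ fast blue particles, and (ii) fails, so that at every time any $k/2$ of the reds occupy at least $ck$ vertices. Form the bipartite graph $H$ with parts $A$ and $R$, joining a blue to a red whenever the two collide strongly at some time. If every $S\subseteq A$ had at least $|S|-k/4$ neighbours in $H$, then Theorem \ref{dhall} (with $d=k/4$) would produce $|A|-k/4=k/2$ disjoint edges, i.e.\ conclusion (iii). Hence (iii) fails only if some $S\subseteq A$ has $|N_H(S)|<|S|-k/4$; choosing any $B'\supseteq N_H(S)$ with $|B'|=|S|-k/4-1$, every blue in $S$ then avoids every red of $R':=R\setminus B'$, a set of more than $k/2$ reds. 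By the failure of (ii), $R'$ occupies at least $ck$ vertices at all times, so to each fast blue $b\in S$ I apply the Moving Target Lemma (Lemma \ref{lem:hitting}), whose hypotheses hold with target size $ck\le n/8$, the spectral condition $\mu_2^r<1/17$, and the displayed near-stationarity bound; I take $\ell=\ceil{6n/(ck)}$ and $A_i$ a $ck$-subset of the positions of $R'$. Since $b$ is fast it makes at least $s\ell$ attempts, and avoiding $R'$ entirely implies missing all of them, so $\prob{b\text{ avoids }R'}\le\ee^{-3s}$; by independence of the blue walks, all of $S$ avoids $R'$ with probability at most $\ee^{-3s|S|}$.

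Taking a union bound over $S$ and over the at most $\binom{k}{|S|-k/4-1}$ choices of $B'$ gives, conditionally,
\[ \prob{\text{(iii) fails}}\le\sum_{S\subseteq A}\binom{k}{|S|-k/4-1}\ee^{-3s|S|}=\sum_{m=k/4+1}^{3k/4}\binom{3k/4}{m}\binom{k}{m-k/4-1}\ee^{-3sm}. \]
The main obstacle is then purely computational: verifying that this exponential-in-$k$ sum is at most $0.8^{sk/4}$ for \emph{every} $s\ge 1$, the point being that the entropy of the choices of $S$ and $B'$ must be beaten by the product $\ee^{-3s|S|}$ of the per-blue failure probabilities. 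Substituting $j=m-k/4-1$, pulling out $\ee^{-3s(k/4+1)}\le\ee^{-3sk/4}$, and bounding $\binom{3k/4}{\cdot}\le 2^{3k/4}$ and $\sum_j\binom{k}{j}\ee^{-3sj}=(1+\ee^{-3s})^{k}$ reduces the claim to $\ee^{-3sk/4}2^{3k/4}(1+\ee^{-3s})^{k}\le 0.8^{sk/4}$, whose $k$-th root one checks is tightest at $s=1$ (where the left side is about $0.834^{k}$ against $0.8^{k/4}\approx 0.946^{k}$) and holds with room to spare for all larger $s$, since the ratio of the two sides is decreasing in $s$. Taking expectations over the revealed randomness then yields the lemma.
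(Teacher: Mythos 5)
Your proof is correct in substance and shares the paper's skeleton: reveal the movement schedule and the red trajectories so that the blue walks become conditionally independent with unchanged initial laws, convert failure of (iii) into a Hall deficiency via Theorem \ref{dhall}, and kill each deficiency witness with Lemma \ref{lem:hitting} plus a union bound. But your combinatorial accounting is genuinely different, and sharper. The paper applies Theorem \ref{dhall} on the red side with defect $d=k/2$: failure of (iii) then yields a pair of $k/2$-sets (one red, one blue) with no meetings between them, it unions over the at most $4^k$ such pairs, and for each pair only the at least $k/4$ fast blues in the blue set contribute a factor $\ee^{-3s}$ each. You apply Theorem \ref{dhall} on the blue side, restricted to $3k/4$ fast blues with defect $d=k/4$, so your witness is a pair $(S,B')$ with $|B'|=|S|-k/4-1$ and with every particle of $S$ fast; this ties the entropy of the red witness set to $|S|$, so precisely where the probability gain $\ee^{-3s|S|}$ is weakest ($|S|$ near $k/4$) the number of choices of $B'$ is negligible. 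This buys something concrete: in the paper's proof the stated per-pair bound $20^{-sk/2}$ is not what its own argument justifies (with $|B'|\ge k/4$ one only gets $20^{-sk/4}$), and $4^k\cdot 20^{-sk/4}$ exceeds $1$ when $s=1$, which is exactly the case invoked in Section \ref{sec:p-const}; the paper's computation as corrected only covers $s\ge 2$. Your final inequality $\ee^{-3sk/4}2^{3k/4}(1+\ee^{-3s})^k\le 0.8^{sk/4}$ holds for all $s\ge 1$ (tightest at $s=1$, roughly $0.834^k$ against $0.946^k$, with the ratio of the two sides decreasing in $s$), so your accounting proves the lemma as stated, including $s=1$.

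One detail needs fixing: you cannot fix $B_0$ ``at the outset'' when there are more than $k$ blue particles, because the failure of (i) guarantees more than $3k/4$ fast blues among \emph{all} blues, which need not lie in a pre-selected $B_0$. This is harmless, since your argument never actually uses $B_0$: every blue satisfies the independence and initial-distribution hypotheses, so you may take $A$ to be any $3k/4$ fast blues among all of them (the set of fast blues is determined by the revealed schedule, so the conditioning is unaffected); alternatively, as the paper does, restrict to the $k$ blues with the most movements, chosen after revealing the schedule.
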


\begin{proof}
We can assume that there are exactly $k$ particles of each colour,
by restricting attention to an arbitrary set of $k$ red particles
and the $k$ blue particles with the most movements.
We reveal the times at which each particle moves and the position of the red particles at each time. 
By independence of the initial positions of the walks,
the positions of the blue particles at each time are independent
conditional on the times at which each walk moves. 
Thus the events of different blue particles hitting any subset of the red particles are independent
conditional on the positions of the red particles at each time. 

We will bound the probability that events (i), (ii) and (iii) all fail.	
To do so, we estimate the probability of the event (iii') that one cannot find
a subset $A$ of $k/2$ red particles and a subset $B$ of $k/2$ blue particles
with no meeting between a particle in $A$ and a particle in $B$.
By Theorem \ref{dhall} event (iii') implies event (iii).
We will show that when events (i) and (ii) fail
then it is very unlikely that event (iii') fails.

Fix some $A$ and $B$ as above.
As event (ii) fails, at each time $i$  we can fix a `target' set $A_i$ of $ck$ vertices 
which are occupied by particles in $A$ at time $t'$. 
We will apply Lemma \ref{lem:hitting} with these target sets (replacing $k$ by $ck$),
which are likely to hit by particles in $B$ moving at least $\ell rs = 6nrs/(ck)$ times.
As event (i) fails, the subset $B'$ of such particles in $B$ has size $|B'| \ge k/4$.
Applying Lemma \ref{lem:hitting}, each blue particle in $B'$ hits some $A_{t'}$ with probability at least $1-\ee^{-3s}>1-20^{-s}$. 
Thus the probability that the particles in $A$ and $B$ do not meet is at most $20^{-sk/2}$. 
Since there are at most $4^k$ choices for the sets $A$ and $B$, 
the failure probability is at most $4^k/20^{sk/2}\leq 0.8^{sk/2}$. 
\end{proof}
	
In order to apply Lemma \ref{lem:halls}, we will repeatedly need to show that most of the blues move sufficiently many times in some period.

\begin{lem}\label{lem:blues-move}
Suppose there are $k\leq n/2$ particles of each type.
Fix $r'\geq 1$. Consider $96nr'$ steps of the fake process.
Then with probability $1-o(n^{-4})$,
at least $3k/4$ blue particles each move at least $24nr'/k$ times.
\end{lem}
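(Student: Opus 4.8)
The plan is to count the \emph{slow} blue particles---those that move fewer than $24nr'/k$ times during the $M:=96nr'$ steps---and to show that with probability $1-o(n^{-4})$ there are fewer than $k/4$ of them, which is exactly the assertion. First I would record the per-particle rate. Since blue moves at least as fast as red ($p\le 1/2$) and the total speed mass with $k$ particles of each type is $k$, each individual blue particle is selected at each step with probability $(1-p)/k\ge 1/(2k)$, and these selections are independent across steps. Hence the number $N_b$ of moves of a fixed blue $b$ satisfies $N_b\sim\operatorname{Bin}(M,(1-p)/k)$, which stochastically dominates $\operatorname{Bin}(M,1/(2k))$ of mean $48nr'/k$. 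As the target threshold $24nr'/k$ is exactly half this mean, a standard Chernoff lower-tail bound gives $\prob{N_b<24nr'/k}\le \ee^{-6nr'/k}$.

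The hard part is that the $N_b$ are not independent: exactly one particle moves at each step, so the move counts of the $2k$ particles form a multinomial occupancy vector. The key tool I would invoke is that multinomial counts are \emph{negatively associated}; restricting to the blue coordinates preserves this, and since each slow-indicator $Z_b:=\mathbf 1[N_b<24nr'/k]$ is a (non-increasing) function of the single coordinate $N_b$, the family $(Z_b)$ is again negatively associated. Consequently the number of slow blues $B:=\sum_b Z_b$ obeys the same upper-tail Chernoff bound as if the $Z_b$ were independent. With $\mean B\le k\ee^{-6nr'/k}$ and target $k/4$, this yields
\[ \prob{B\ge k/4}\le\Bigl(\tfrac{e\,\mean B}{k/4}\Bigr)^{k/4}\le\bigl(4e\,\ee^{-6nr'/k}\bigr)^{k/4}=(4e)^{k/4}\,\ee^{-(3/2)nr'}. \]
The decisive feature is that the factors of $k$ cancel in the exponent, since $6nr'/k\cdot k/4=(3/2)nr'$, so using $(4e)^{k/4}\le(4e)^{n/8}\le\ee^{0.3n}$ and $r'\ge1$ I obtain $\prob{B\ge k/4}\le\ee^{0.3n-1.5n}=\ee^{-1.2n}=o(n^{-4})$, \emph{uniformly} over all $k\le n/2$.

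I would flag two points. First, negative association is doing the essential work; an equivalent self-contained route is to union bound over the $\binom{k}{k/4}\le 2^k$ candidate slow-sets $S$ and use the negative-association product bound $\prob{\bigcap_{b\in S}\{N_b<\theta\}}\le\prod_{b\in S}\prob{N_b<\theta}$, giving $\prob{B\ge k/4}\le 2^k\ee^{-(3/2)nr'}=\ee^{-\Omega(n)}$; alternatively one could poissonise the selection process as in Section~\ref{sec:lonely} to make the counts genuinely independent. Second, a purely moment-based approach does not suffice: Markov's inequality applied to $\mean B$ gives only a constant bound, and McDiarmid's bounded-differences inequality (each step changes $B$ by at most $2$) gives only $\ee^{-\Omega(k^2/(nr'))}$, which is too weak once $k=o(\sqrt{n\log n})$. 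It is precisely the negative-association argument, with its cancellation of $k$, that delivers the required $o(n^{-4})$ bound across the whole range $k\le n/2$.
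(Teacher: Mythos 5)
Your proof is correct, but it handles the central difficulty --- dependence among the per-particle move counts --- by a genuinely different route than the paper. You work at the level of individual particles: a Chernoff bound for each $N_b$, which stochastically dominates $\operatorname{Bin}(96nr',1/(2k))$, gives $\prob{N_b<24nr'/k}\le \ee^{-6nr'/k}$, and then negative association of the multinomial move-count vector (restricted to the blue coordinates, with the slow-indicators as monotone functions of disjoint coordinates) lets you treat the slow-count $B$ as if it were a sum of independent indicators, after which the factors of $k$ cancel in the exponent and the bound is uniform over $k\le n/2$. The paper avoids any negative-dependence machinery altogether: it union bounds over the at most $\binom{n/2}{n/8}\le(4\ee)^{n/4}$ candidate sets $S$ of $k/4$ blue particles, and observes that the \emph{total} number of moves made by particles of $S$ is exactly binomial --- at each fake step the mover lies in $S$ with probability $(k/4)(1-p)/k\ge 1/8$, independently across steps --- so this total has mean at least $12nr'$, exceeds $6nr'$ except with probability at most $\exp(-3n/2)$, and by pigeonhole some particle of $S$ then moves at least $6nr'/(k/4)=24nr'/k$ times, i.e.\ $S$ cannot consist entirely of slow particles. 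The same cancellation of $k$ that powers your argument appears there through the pigeonhole step. As for what each approach buys: the paper's argument is fully elementary (one binomial Chernoff bound plus pigeonhole, no appeal to Joag-Dev--Proschan negative association or to poissonisation), whereas yours isolates reusable intermediate facts (the per-particle tail bound and $\mean{B}\le k\ee^{-6nr'/k}$), and your closing remark --- that Markov or bounded differences would fail for $k=o(\sqrt{n\log n})$ --- correctly identifies why some stronger device, either your negative-association step or the paper's set-sum trick, is genuinely required across the whole range of $k$.
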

\begin{proof}
We will bound the probability that there is some set of $k/4$ blue particles
where each moves at most $24nr'/k$ times.
There are at most $\binom{n/2}{n/8}\leq (4\ee)^{n/4}$ sets of $k/4$ blue particles. 
The total number of movements made by such a set is binomially distributed with mean at least $12nr'$, since $p\geq 1/2$. 
By a standard Chernoff bound, this is at least $6nr'$ with probability at least $1-\exp(-3n/2)$. 
If it is, then at least one of those particles moves at least $24nr'/k$ times. 
The result follows from a union bound, since $(4\ee)^{n/4}\exp(-3n/2)=o(n^{-4})$.
\end{proof}

We conclude this section by recording a bound on how many fake steps are needed in the mixing phase of each round.
Depending on the parameters, we may have enough time to mix all particles or only time to mix the faster blue particles.

\begin{lem}\label{lem:mixing2}
Fix $\alpha>0$, let $\beta=\beta(\alpha)>1$ with $\alpha(\beta-\log\beta-1)= 4$, and define 
\begin{equation}t_{k}^{\mathrm{blue}}:=\ceil{2\alpha\beta k\log n}
\quad\text{and}\quad t_{k}^{\mathrm{all}}:=\ceil{ p^{-1}\alpha\beta k\log n}.\label{t1}\end{equation}
Consider the fake process  with $k\leq n/2$ particles of each colour. 
Then with probability at least $1-n^{-3}$,
\begin{enumerate}[(i),nosep]
\item by time $t^{\mathrm{blue}}_k$
each blue particle has moved at least $\alpha\log n$ times,
\item by time $t^{\mathrm{all}}_k$
every particle has moved at least $\alpha\log n$ times.
\end{enumerate}
\end{lem}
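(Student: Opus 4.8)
The plan is to control each particle separately with a Chernoff lower-tail bound and then union bound over the $O(n)$ particles; the point is that the definition $\alpha(\beta-\log\beta-1)=4$ is rigged precisely so that each per-particle tail comes out to be $n^{-4}$. First I would record the per-step move probabilities. In the fake process a type is selected with probability equal to its speed and then a uniform particle of that type is moved, so with $k$ particles of each colour a fixed blue particle moves at each step with probability $(1-p)/k$ and a fixed red with probability $p/k$, independently across steps (here $p\le 1/2$ is the red speed; I assume $p>0$ so that $t^{\mathrm{all}}_k$ is finite, the case $p=0$ being the stationary-red model treated separately). Hence the number of moves made by a fixed particle in $t$ steps is a sum of independent indicators, to which standard concentration applies.

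Next I would check that the means are large enough. For part (i), taking $t=t^{\mathrm{blue}}_k\ge 2\alpha\beta k\log n$, a fixed blue has expected number of moves at least $2\alpha\beta k\log n\cdot(1-p)/k=2(1-p)\alpha\beta\log n\ge\alpha\beta\log n$, using $1-p\ge1/2$. For part (ii) the reds are the bottleneck (as $p\le1-p$), and with $t=t^{\mathrm{all}}_k\ge p^{-1}\alpha\beta k\log n$ a fixed red has expected number of moves at least $p^{-1}\alpha\beta k\log n\cdot p/k=\alpha\beta\log n$. Thus in every case the relevant particle has mean at least $\mu:=\alpha\beta\log n$, and I want to bound the chance it makes fewer than $a:=\alpha\log n$ moves.

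The crux is the Chernoff estimate $\prob{X\le a}\le e^{a-\mu}(\mu/a)^a=\exp\bigl(-\mu+a+a\log(\mu/a)\bigr)$, valid for a sum $X$ of independent indicators with mean $\mu\ge a$ (and decreasing in $\mu$, so the lower bound on the mean may be substituted). Putting $\mu/a=\beta$ gives exponent $-\alpha\log n\,(\beta-1-\log\beta)$, which equals $-4\log n$ by the defining relation for $\beta$; hence each particle fails with probability at most $n^{-4}$. For the union bound I would note that $t^{\mathrm{blue}}_k\le t^{\mathrm{all}}_k$ (since $p^{-1}\ge2$), so controlling each blue at time $t^{\mathrm{blue}}_k$ establishes (i) and also delivers the blue half of (ii); it then remains only to control each red at time $t^{\mathrm{all}}_k$. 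This is at most $2k\le n$ events in total, each of probability at most $n^{-4}$, so the total failure probability is at most $n^{-3}$, as required.

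I do not anticipate a genuine obstacle here: this is concentration followed by a union bound. The only steps needing care are verifying that $\beta(\alpha)$ makes the Chernoff exponent exactly $4\log n$, confirming that the reds are the slowest type in part (ii), and recording the observation $t^{\mathrm{blue}}_k\le t^{\mathrm{all}}_k$ which keeps the union bound down to at most $n$ events.
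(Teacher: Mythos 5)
Your proposal is correct and follows essentially the same route as the paper: treat each particle's move count as a binomial variable with mean at least $\alpha\beta\log n$ (using $1-p\ge 1/2$ for the blue case), apply the Chernoff lower tail with threshold-to-mean ratio $1/\beta$ so that the defining relation $\alpha(\beta-\log\beta-1)=4$ yields a per-particle failure probability of $n^{-4}$, and union bound over at most $n$ particles. Your additional remarks (monotonicity of the tail bound in $\mu$, $t^{\mathrm{blue}}_k\le t^{\mathrm{all}}_k$, the $p>0$ caveat) are minor bookkeeping that the paper leaves implicit.
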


\begin{proof}
We consider a union bound of probabilities that some given particle moves fewer than $\alpha\log n$ times.  
This is at most the probability that a binomial random variable with mean $\mu\geq \alpha\beta\log n$ (in the first case, using $1-p\geq 1/2$) 
attains a value less than $\mu/\beta$. By a standard Chernoff bound, using the definition of $\beta$, 
this has probability at most $(\ee^{1/\beta-1}\beta^{1/\beta})^\mu\leq n^{-4}$. 
\end{proof}
When applying Lemma \ref{lem:mixing2} we shall always take 
the specific value $\alpha=-3/\log \mu_2=\Theta(1)$ required 
to achieve approximate uniform distribution as in \eqref{prob-bounds}.

\subsection{Bounded speed ratio}\label{sec:p-const}

Here we prove Theorem \ref{thm:expander} under the simplifying assumption of bounded speed ratio,
\ie the red particles move at a speed $p \in [c,1/2]$ for some fixed $c>0$.
This allows us to give a streamlined proof showcasing the main technique,
to be followed by a full proof for all $p$ in Section \ref{sec:any-p}.

As discussed above, our basic strategy is to divide into rounds, 
in each of which we eliminate $1/4$ of the remaining particles.
Given a round that starts with $k$ particles of each type,
we will use our usual coupling of true and fake particles
to bound the time taken by the round by the time for 
finding $k/2$ disjoint opposite-coloured meeting pairs of fake particles;
this is sufficient by Lemma \ref{lem:non-interacting}
and achievable by Lemma \ref{lem:halls}.

We start with some basic reductions. We can assume that $n$ is sufficiently large.
We can assume that at the start of any round $k$ is at least some large constant, say $100$.
Indeed, consider the process when only $100$ particles of each colour remaining. 
Then we may mix all blues with high probability in time $O(\log n)$,
then wait until some blue has moved $6rn\log n$ times,
which takes time $O(n\log n)$ with high probability. 
By Lemma \ref{lem:hitting} with $k=1$, this blue hits any fixed red particle with probability at least $1-n^{-3}$. 
Repeating $100$ times eliminates all remaining particles in time $O(n\log n)$ with suitable probability. 

To bound the time taken by any given round, we think of it as broken up into phases:
first a `mixing phase' of $t^{\mathrm{all}}_k$ fake steps, as defined in Lemma \ref{lem:mixing2}, 
followed by a `hitting phase' of $96nr$ fake steps, with $r$ as in Lemma \ref{lem:halls}.
The round ends as soon as $k/4$ particles 
of each colour have been destroyed (which may happen during the mixing phase).
By Lemma \ref{lem:mixing2}, if the mixing phase runs to completion then 
with probability at least $1-n^{-3}$ mixing `succeeds',
meaning that every particle moves at least $\alpha\log n$ times.

Assuming that mixing succeeds, we reveal the positions of the blues before the mixing phase
and the number of times each has moved within it. Then the conditional distribution of the blues
is independent with each being approximately uniform as in \eqref{prob-bounds},
which is easily good enough to apply Lemma \ref{lem:halls}. This tells us that
during the hitting phase, with probability at least $1-0.8^{k/4}$ there are
	\begin{enumerate}[(i),nosep]
		\item at most $3k/4$ fake blue particles that move at least $24nr/k$ times, or
		\item at least $k/2$ fake red particles at some time occupying a set of fewer than $k/8$ vertices, or
		\item at least $k/2$ meetings of disjoint pairs of oppositely coloured fake particles.
	\end{enumerate}
Here (iii) is our desired outcome for hitting to `succeed', 
and we can assume that (i) does not hold by Lemma \ref{lem:blues-move}. 
Thus we need to bound the probability of (ii), as follows.

\begin{lem} \label{no-ii}
At any step in the hitting phase, the probability that there is some set of $k/2$ fake red particles
occupying fewer than $k/8$ vertices is at most $(1.5k/n)^{3k/8}$.
\end{lem}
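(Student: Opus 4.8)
The plan is to bound the bad event by a union bound over small vertex sets, using a binomial tail estimate that exploits the sharp $\approx 1/n$ occupancy probability guaranteed by successful mixing. Throughout we may assume there are exactly $k$ fake red particles, since all $k$ fake reds survive the round (fake particles do not interact). As we are at a step of the hitting phase following a successful mixing phase, I would first reveal the schedule of which particle moves at each step, which is independent of the walk increments. Conditional on this, each red particle has taken some deterministic number $m\geq\alpha\log n$ of steps, so by \eqref{prob-bounds} it lies at any given vertex with probability at most $1/n+1/n^3$, and the red positions are mutually independent (the walks being independent).

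Next I would reduce the event to a statement about a fixed small vertex set. If some set of $k/2$ reds occupies fewer than $k/8$ vertices, then (padding the occupied set if necessary) there is a set $S$ of exactly $\lfloor k/8\rfloor$ vertices containing at least $k/2$ of the reds. Hence it suffices to union bound over the $\binom{n}{\lfloor k/8\rfloor}$ choices of $S$. For a fixed such $S$, each red lies in $S$ independently with probability $q\leq (k/8)(1/n+1/n^3)$, so the event that at least $k/2$ of the $k$ reds lie in $S$ has probability at most $\binom{k}{k/2}q^{k/2}$ by a further union bound over which $k/2$ reds these are.

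Combining, $\prob{\text{bad}}\leq \binom{n}{\lfloor k/8\rfloor}\binom{k}{k/2}q^{k/2}$. Using $\binom{n}{\lfloor k/8\rfloor}\leq (8en/k)^{k/8}$, $\binom{k}{k/2}\leq 2^k$, and $q\leq (k/8n)(1+o(1))$, a short calculation reduces the right-hand side to $(1+o(1))(e/2)^{k/8}(k/n)^{3k/8}$: the powers of $k/n$ combine as $-k/8+k/2=3k/8$, while the remaining constants collapse, since $2^{3k/8}e^{k/8}\cdot 2^{k}\cdot 2^{-3k/2}=2^{-k/8}e^{k/8}=(e/2)^{k/8}$. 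As $e/2<(3/2)^3$, we have $(e/2)^{k/8}<(3/2)^{3k/8}$, which yields the claimed bound $(1.5k/n)^{3k/8}$, with room to absorb the $(1+o(1))$ factor coming from the $1/n^3$ correction and the rounding of $k/8$.

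The point I expect to require the most care is pinning down the constant $3/2$, rather than the overall shape of the argument. Because $k$ may be as large as $n/2$, the ratio $k/n$ is bounded away from $0$, so the exponential prefactors $\binom{n}{k/8}$ and $\binom{k}{k/2}$ are genuinely competitive with the tail $q^{k/2}$. A cruder occupancy bound of $2/n$ (harmless where there is slack, as in Theorem \ref{thm:persistent}) or a lossy Chernoff estimate in place of the exact union bound $\binom{k}{k/2}q^{k/2}$ would each inflate the constant past $3/2$; it is precisely the sharp mixing estimate \eqref{prob-bounds} together with the clean binomial tail that leaves enough room.
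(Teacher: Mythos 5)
Your proposal is correct and follows essentially the same route as the paper's proof: a union bound over the $\binom{n}{k/8}$ vertex sets and $\binom{k}{k/2}$ particle subsets, with the per-particle occupancy bound $\frac{k}{8}\left(\frac1n+\frac1{n^3}\right)$ from \eqref{prob-brounds} applied via independence, leading to the same $(e/2)^{k/8}(k/n)^{3k/8}$ intermediate bound and the same comparison $e/2<(3/2)^3$. Your additional remarks (revealing the move schedule to get independence, padding the occupied set to exactly $\lfloor k/8\rfloor$ vertices, and absorbing the $(1+o(1))$ rounding/correction factors using $100\le k\le n/2$) are just explicit versions of steps the paper leaves implicit.
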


\begin{proof}
By approximate uniformity as in \eqref{prob-bounds},
a union bound over $\binom{k}{k/2}$ choices for the set of particles
and $\binom{n}{k/8}$ choices for the set of vertices gives
\begin{align*} & \binom{k}{k/2}\binom{n}{k/8}
        \left( \frac{k}{8} \left( \frac{1}{n} + \frac{1}{n^3}  \right) \right)^{k/2}\\
	&\leq (1+O(k/n^2)) 2^k \left(\frac{8en}{k}\right)^{k/8}\left(\frac{k}{8n}\right)^{k/2}\\
	& \le 2 \left(\frac{ek^3}{2n^3}\right)^{k/8} \leq (1.5k/n)^{3k/8}, \text{ as } 100 \le k\leq n/2. \qedhere
\end{align*}
\end{proof}

For $100 \le k\leq n/2$, the bound in Lemma \ref{no-ii} is maximised at $k=100$,
and is easily $O(n^{-50})$, say. Thus with suitably high probability 
mixing and hitting succeed throughout the process. There are $O(\log n)$ rounds, 
so the total time spent in hitting phases is  at most $O(n\log n)$.
The total time spent in mixing phases is bounded by the sum of 
$t_{k}^{\mathrm{all}}:=\ceil{ p^{-1}\alpha\beta k\log n}$
for $k = (3/4)^i n$, $i \ge 0$. This is also $O(n\log n)$,
by our assumption that $p$ is bounded away from zero.
This completes the proof of  Theorem \ref{thm:expander} in this case.

\subsection{General speeds}\label{sec:any-p}

Now we will prove Theorem \ref{thm:expander} in full generality.
Our approach is similar to that for the case of bounded speed ratio,
except that we may not have time to mix the slower red particles,
so we need some other way to argue that they are well-distributed.
We consider up to three separate regimes according to the number of remaining particles.
In the first `dense' regime, the red particles will be well-distributed because most of them 
have not had enough time to move from their starting locations.
In the third `sparse' regime, there are so few particles that we have time to mix the reds.
In the second `intermediate' regime, neither of these approaches works, but as it is relatively narrow  
we can afford a worse distribution of the red particles.

We define these regimes in terms of the number $k$ of remaining particles in each colour, as follows. 
The first regime covers all $k$ greater than $k^* := \max\{ c_1 pn\log n,20\log n\}$, 
for some constant $c_1$ depending on the spectral gap of $G$ to be defined below,
the third regime covers all $k\leq pn$, and the second regime covers all intermediate values.
We note that over the second regime $k$ varies by a factor at most $c_1\log n$ (since we shall have $c_1\geq20$).

We start by indicating how the third regime is handled by the same proof as in the previous subsection.
Recall that in the round starting with $k$ particles of each colour, we reduce from $k$  to $3k/4$,
analysing the time taken by considering a mixing phase of fake time $t^{\mathrm{all}}_k$ 
followed by a hitting phase of fake time $96nr$. We showed that with suitably high probability 
mixing and hitting succeed throughout the process. There are $O(\log n)$ rounds, 
so the total time spent in hitting phases is  at most $O(n\log n)$.
The total time spent in mixing phases is bounded by the sum of 
$t_{k}^{\mathrm{all}}=\ceil{ p^{-1}\alpha\beta k\log n}$
for $k = (3/4)^i pn$, $i \ge 0$, which is also $O(n\log n)$, as required.
Thus with high probability in the third regime extinction occurs in time $O(n\log n)$.

For the first regime, we use the same analysis, 
except that we can only allow a mixing phase of fake time $t^{\mathrm{blue}}_k$.
This ensures that the total time spent in mixing phases is bounded by the sum of 
$t_{k}^{\mathrm{blue}}=\ceil{2\alpha\beta k\log n}$
for $k = (3/4)^i n$, $i \ge 0$, which is $O(n\log n)$.
Now success of the mixing phase means that every fake blue particle moves $\alpha n$ times. 
The previous proof will still show that mixing and hitting succeed with high probability
if we can find a replacement for Lemma \ref{no-ii}. This is achieved by the following lemma,
which implies that with high probability throughout the first regime 
there is no set of $k/2$ red particles occupying a set of fewer than $k/4$ vertices. 

\begin{lem}\label{lem:not-many-move}
The probability that more than $k^*/4$ red particles move in time $\frac{c_1}{20}n\log n$ is at most $n^{-4}$.
\end{lem}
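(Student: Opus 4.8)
The plan is to reduce the claim to a Chernoff bound on the number of steps in the window at which a red particle is selected to move. Since a red can only change position on a step where it is the selected particle, the number of \emph{distinct} red particles that move is at most the total number of red moves, so it suffices to bound the latter. The key structural observation is that in balanced two-type annihilation every effective reaction removes one red and one blue simultaneously, so the numbers of surviving red and blue particles stay equal throughout. As each red has speed $p$ and each blue has speed $1-p$, at any step with $k$ particles of each colour the probability that a red is selected is exactly $kp/(kp+k(1-p))=p$, independent of $k$; the same holds for fake reds in the coupled process, since the red and blue \emph{entity} counts (paired particles counted once) each remain equal to the round's starting value $k$, and every red entity moves a fake red when selected. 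Hence the number of red moves over the window of $M:=\frac{c_1}{20}n\log n$ steps is dominated by $\operatorname{Bin}(M,p)$, with mean $pM=\frac{c_1 pn\log n}{20}$.

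Now I would apply a standard Chernoff bound. By the definition $k^*=\max\{c_1 pn\log n,\,20\log n\}$ we have $c_1 pn\log n\le k^*$, so the mean $\mu:=pM$ satisfies $\mu\le k^*/20$, while the target threshold is $k^*/4=5\cdot(k^*/20)\ge 5\mu$. The Chernoff estimate $\prob{\operatorname{Bin}(M,p)\ge a}\le \ee^{-a\log(a/\mu)+a-\mu}$ with $a=5\mu$ gives a bound of $\ee^{-\mu(5\log 5-4)}$ in the extremal case $\mu=k^*/20$ (the case $\mu<k^*/20$, which arises when $k^*=20\log n$ and $p$ is tiny, only improves the bound). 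Since $k^*\ge 20\log n$ forces $\mu\ge\log n$, and $5\log 5-4\approx 4.05>4$, this is at most $n^{-4}$, as required.

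The content here is entirely in the bookkeeping rather than in any hard estimate, and the two points to verify carefully are (a) that the per-step probability of a red move really is $p$ (or at most a constant times $p$) under the true/fake coupling and across all the decreasing counts $k$ of the first regime, so that the total is dominated by a single binomial; and (b) that the one constant $c_1$ can simultaneously play its two roles—fixing the threshold $k^*$ and the window length $\frac{c_1}{20}n\log n$—with $c_1$ chosen large enough (in terms of the spectral gap, via $\alpha,\beta,r$) that this window does contain the entire first regime. Once these are in place, the lemma feeds directly into the spreading claim: at most $k^*/4<k/4$ of the reds have moved, so among any $k/2$ reds at least $k/4$ still sit at their distinct starting vertices and hence occupy at least $k/4$ vertices, ruling out alternative (ii) of Lemma \ref{lem:halls}.
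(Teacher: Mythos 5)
Your proof is correct and follows essentially the same route as the paper's: identify the number of red movements in the window as a binomial variable with mean $\frac{c_1}{20}pn\log n\leq k^*/20$ (the paper takes the colour balance, hence the constant per-step red-move probability $p$, as given, which you verify explicitly including for the coupled fake process), then apply a multiplicative Chernoff bound at threshold $k^*/4=5\cdot(k^*/20)$ and use $k^*\geq 20\log n$ to get $n^{-4}$. Your explicit monotonicity-in-$\mu$ check and the closing remarks on how the lemma feeds into ruling out alternative (ii) are elaborations of steps the paper leaves implicit, not a different argument.
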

\begin{proof}
A standard Chernoff bound gives that for a binomial random variable with mean at most $a/5$ 
has value exceeding $a$ with probability at most $(\ee^{4/5}/5)^a<\ee^{-4a/5}$. 
Now the number of movements of red particles in $\frac{c_1}{20}n\log n$ time steps is binomial, 
with expectation $\frac{c_1}{20}pn\log n\leq k^*/20$. 
Thus this exceeds $k^*/4$ with probability at most $\ee^{-k^*/5}$, 
and since also $k^*\geq 20\log n$, this gives the desired bound.
\end{proof}

Choosing $c_1\geq 20(8\alpha\beta+96 r/\log(4/3))$ we see that in time $\frac{c_1}{20}n\log n$ 
the first regime succeeds with high probability, meaning that we reduce $k$ to $k^*$.

\subsubsection{The intermediate regime}\label{sec:second-regime}

The main challenge of the intermediate regime is that we cannot rule out
the admittedly implausible scenario that the slow red particles huddle together
on a few sites that are hard for the blue particles to hit.
We start with a lemma showing that at least  this scenario cannot be too extreme.
(A similar bound appears for the complete graph in \cite{2-type}, 
but their argument is specific to that case.) 

\begin{lem}\label{lem:poisson2}
Position one red particle at every vertex of a regular graph of order $n$.
Consider a process where in each step an arbitrary (possibly empty) set of red particles is removed
and then a uniformly random particle takes a random walk step. Then with probability $1-O(n^{-3})$, 
no vertex has more than $8\log n/\log\log n$ red particles at any point in the first $n^2$ steps.
\end{lem}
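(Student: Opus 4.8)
The plan is to strip away the adversarial, one-particle-at-a-time dynamics and reduce to a system of $n$ \emph{independent} walks, then bound that system's maximum pile height by an up-crossing (rate-conservation) argument that exploits regularity of $G$. \textbf{Step 1 (Poissonisation and coupling).} Equip each of the $n$ initial particles with an independent rate-$1$ Poisson clock and an independent sequence of lazy random walk steps. Running all $n$ particles by these clocks and steps, ignoring removals, produces a reference process $\mathcal P_0$ of $n$ independent continuous-time random walks, one started at each vertex. The true process $\mathcal P$ is recovered by reading its discrete steps as the successive rings of clocks of currently-active particles: at each ring we first let the adversary remove an (adapted) set and then move the ringing particle, which is legitimate since a uniformly random active particle rings next. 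An active particle always makes the same moves as in $\mathcal P_0$, so at every continuous time $s$ its position agrees in the two processes. Hence, writing $N_v(s)$ for the pile at $v$ in $\mathcal P$ and $M_v(s)$ for the number of $\mathcal P_0$-walks at $v$, we have the pathwise bound $N_v(s)\le M_v(s)$. Since moves never change the particle count and removals only decrease it, the number $m(s)$ of active particles is non-increasing, and also $N_v(s)\le m(s)$.

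\textbf{Step 2 (bounding the time horizon).} The first $n^2$ discrete steps of $\mathcal P$ happen at the first $n^2$ active rings, which may stretch far into continuous time if many particles are removed. I claim that with probability $1-\ee^{-\Omega(n^2)}$ every step that could produce a pile exceeding $h:=8\log n/\log\log n$ occurs before continuous time $n^2$. Indeed, suppose that at time $n^2$ fewer than $n^2$ steps have occurred and $m(n^2)>h$; by monotonicity $m(s)>h\ge 2$ throughout $[0,n^2]$, so the active rings form a point process of rate exceeding $2$, whence the number of rings in $[0,n^2]$ dominates $\Po(2n^2)$ and is at least $n^2$ except with probability $\ee^{-\Omega(n^2)}$, contradicting the assumption. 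Thus either all $n^2$ steps are complete by time $n^2$, or $m$ has already dropped to at most $h$; in the latter case $N_v(s)\le m(s)\le h$ for all later $s$, so no violation can occur afterwards.

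\textbf{Step 3 (up-crossings of the independent system).} It remains to bound $\prob{\exists\,v,\ \exists\,s\le n^2:\ M_v(s)>h}$. Since $M_v$ starts at $1$ and changes by $\pm1$, exceeding $h$ forces an arrival at some $v$ at an instant when $h$ other walks are present. Fix $v$ and a walk $i$; conditioning on the trajectory of $i$ (independent of the rest) and using linearity, the expected number of such arrivals of $i$ in $[0,n^2]$ is at most $\mean{\#\text{arrivals of }i\text{ at }v}\cdot\prob{\text{Poisson--binomial of the other walks}\ge h}$. Because $G$ is regular the walk distribution is doubly stochastic, so $\sum_i\prob{\text{walk }i\text{ at }u}=1$ for every $u$; summing over the $d$ neighbours of $v$ with per-step rate $1/(2d)$ shows the \emph{total} arrival rate into $v$ is exactly $1/2$ at every time, giving $\sum_i\mean{\#\text{arrivals of }i\text{ at }v}=n^2/2$. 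For the other factor, the occupancy of $v$ by the remaining walks is a sum of independent indicators of total mean $1$, so $1+x\le\ee^x$ yields $\prob{\ge h}\le\prob{\Po(1)\ge h}$ for every $s$ with no appeal to mixing, which by \eqref{poisson-bounds} is $n^{-8+o(1)}$ as $h\log h=(8+o(1))\log n$. Summing over $i$ and over the $n$ vertices, the expected number of violating arrivals is at most $n\cdot\tfrac12 n^2\cdot n^{-8+o(1)}=n^{-5+o(1)}$, so Markov's inequality gives the claim with room to spare.

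\textbf{Main obstacle.} The genuine difficulty is that a crude union bound over a time grid fails: controlling the jump of a pile within a cell needs cells so fine that the number of $(\text{cell},\text{vertex})$ pairs overwhelms the $n^{-8+o(1)}$ tail. The up-crossing estimate in Step 3 is exactly what circumvents this, and it works only because the \emph{exact} arrival rate $1/2$ (a consequence of regularity) lets us integrate over $[0,n^2]$ without any mixing input. The other delicate point is justifying Step 1 for \emph{adaptive} removals; this is handled by observing that the adversary's choices, being adapted to the clocks and steps already revealed, never disturb the independence of $\mathcal P_0$, which is defined without reference to removals, so the pathwise domination $N_v\le M_v$ and the rate computation are unaffected.
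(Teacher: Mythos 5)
Your proof is correct, and it takes a genuinely different route from the paper's. The paper works entirely in discrete time: it first reduces the adversarial process to the removal-free process run for $n^3$ steps (Lemma \ref{lem:poisson}), using that any surviving particle is selected enough times that the first $n^2$ adversarial steps are covered by the first $n^3$ removal-free ones; then, because in the removal-free process the particles' move counts are dependent (multinomial), it couples the red particles with an auxiliary system of ``green'' particles started i.i.d.\ uniform, whose occupancy at each fixed $(v,t)$ is exactly $\operatorname{Bin}(n,1/n)$, and finishes with a union bound over the $n\cdot n^3$ pairs $(v,t)$. Your continuous-time Poissonisation instead makes the $n$ walks genuinely independent from the outset, so no auxiliary stationary system is needed; your Step 2 plays the role of the paper's enlarged horizon (if steps arrive slowly, the active count has already fallen below $h$, after which no violation is possible), and the stationarity comparison is replaced by exact double stochasticity, which pins the expected arrival intensity into each vertex at $1/2$. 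The up-crossing count is then the right substitute for a union bound over times, which is indeed unavailable in continuous time. Your route is also quantitatively sharper: it gives failure probability $n^{-5+o(1)}$ rather than $n^{-4+o(1)}$, and it reaches the stated constant $8$ with no loss, whereas the paper's red--green comparison costs a factor $2$ in pile height (Markov only matches half of a red pile by greens). One caveat: your ``Main obstacle'' paragraph overstates the difficulty --- in discrete time the crude union bound over times does work (it is exactly what the paper does); the obstacle you describe is an artefact of the continuous-time formulation, where your up-crossing device is the appropriate fix.

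Two points need tightening, neither fatal. First, in Step 1 the order of operations is off: in the true process the adversary removes a set \emph{before} the mover is drawn uniformly from the survivors, whereas you identify the ringer first and then allow removal, which both changes the law and could remove the ringer. The fix is to place the (adapted) removal decision at the end of the previous step and only then wait for the next ring among the post-removal active set; by memorylessness of the exponential clocks that ringer is uniform on the correct set, and the pathwise domination $N_v\le M_v$ is unaffected. Second, the inequality $\prob{X\ge h}\le\prob{\Po(1)\ge h}$ for a Poisson--binomial $X$ of mean at most $1$ is false as a stochastic domination (a Bernoulli with parameter $p$ is not dominated by $\Po(p)$). What your appeal to $1+x\le\ee^x$ actually proves is that the moment generating function of $X$ is at most that of $\Po(1)$, hence $X$ satisfies the Chernoff bound $\exp(-h\log h+h-1)$, which is precisely the right-hand side of \eqref{poisson-bounds} at $\lambda=1$, $r=h$; that is all your computation uses, so the conclusion $n^{-8+o(1)}$ stands.
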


The intended application of this lemma is that at time $0$
all of the `red' particles that should be blue are removed,
and thereafter red particles will be removed by annihilation 
in some random way for which we consider an adversarial model.
It is an immediate consequence of the following lemma,
as by Chernoff bounds with high probability every particle
moves at least $n^2$ times in the first $n^3$ steps if we ignore removals,
so no matter how the removals are chosen these $n^3$ steps 
cover at least $n^2$ steps (or all the steps, if fewer).

\begin{lem}\label{lem:poisson}
Position one red particle at every vertex of a regular graph of order $n$.
Consider a process where in each step a uniformly random particle takes a random walk step.
Then with probability $1-O(n^{-3})$, no vertex has more than $8\log n/\log\log n$ red particles at any point in the first $n^3$ steps.
\end{lem}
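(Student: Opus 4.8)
The plan is to remove the dependence between particles—which in the stated dynamics share a single ``clock'' selecting exactly one particle per step—by passing to a continuous-time Poisson embedding, and then to bound the maximum occupancy by a union bound taken only over the actual arrival events, which turns out to be sharp enough to give the stated constant $8$.

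First I would replace the process by an exactly equivalent continuous-time model. Equip each of the $n$ particles with an independent Poisson clock of rate $1/n$, and whenever a clock rings let the corresponding particle take one (lazy) random-walk step. The superposition of the clocks is a rate-$1$ Poisson process, and at each of its points the ringing particle is uniform among all $n$, so the first $n^3$ rings reproduce exactly the first $n^3$ steps of the process in the lemma. I would run this model up to the deterministic time $S:=2n^3$. Since the number of rings in $[0,S]$ is $\Po(2n^3)$, which exceeds $n^3$ with probability $1-\exp(-\Omega(n^3))$, it suffices to bound the maximum occupancy over the whole window $[0,S]$. The purpose of this embedding is that the trajectories of distinct particles are now genuinely independent.

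The key input is a mean-one estimate valid at every instant. For a regular graph the (lazy) transition matrix $P$ is symmetric, hence doubly stochastic, so for any vertex $v$ and any time $t$ we have $\sum_{i}\prob{\text{particle }i\text{ is at }v\text{ at time }t}=1$, since each summand equals $\sum_m \prob{\Po(t/n)=m}(P^m)_{iv}$ and the columns of every $P^m$ sum to $1$. Now the occupancy of a vertex rises only when a particle jumps onto it, so if the maximum occupancy ever reaches $M:=8\log n/\log\log n$ there must be a jump whose \emph{arrival occupancy} (the occupancy of the landing vertex immediately afterwards) is at least $M$. I would therefore bound the expected number of such ``bad'' jumps. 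Fixing a particle $p$ and conditioning on its entire trajectory, the locations of the other $n-1$ particles at the (now deterministic) time and place of any jump of $p$ are independent Bernoulli indicators of total mean at most $1$; by the Poisson/Chernoff upper tail the chance that at least $M-1$ of them coincide with $p$'s landing vertex is at most $\exp(-(M-1)\log(M-1)+O(M))=n^{-8+o(1)}$. Summing this uniform bound over the $\mean{\Po(S/n)}=2n^2$ expected jumps of each particle and over all $n$ particles bounds the expected number of bad jumps by $2n^3\cdot n^{-8+o(1)}=O(n^{-5})$, so by Markov's inequality a bad jump occurs with probability $O(n^{-3})$.

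Combining the two low-probability events—fewer than $n^3$ rings in $[0,S]$, or the existence of a bad jump—gives the claimed $1-O(n^{-3})$ bound. The only genuine obstacle is the dependence among particles inherent in the ``one particle moves per step'' rule; the continuous-time embedding is precisely what decouples the trajectories while preserving the law of the process. After that, the mean-one identity (which needs only regularity, not expansion) together with a union bound taken over actual arrival events—rather than over a fine time grid, which would cost a spurious constant factor and force a worse constant—delivers the sharp bound.
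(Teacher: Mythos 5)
Your proof is correct, but it follows a genuinely different route from the paper's. The paper stays in discrete time and handles the dependence created by the shared clock via a coupling: it plants an auxiliary system of $n$ \emph{green} particles at independent uniform vertices, pairs each ``covered'' red with a green starting at its vertex, and moves paired particles together, so that $k$ reds at a vertex force at least $k/2$ greens there with constant probability; since the green system is exactly stationary, each occupancy is $\operatorname{Bin}(n,1/n)$, and a Poisson tail bound plus a union bound over all $n\cdot n^3$ (vertex, time) pairs finishes. You instead decouple the trajectories by Poissonising time (in the spirit of the paper's Section~3 toolkit, which the paper does not in fact deploy for this particular lemma), and you replace the auxiliary stationary system by the observation that the true initial condition already has mean occupancy exactly $1$ at every vertex and every time, because powers of the transition matrix are doubly stochastic; conditioning on one trajectory then makes the other $n-1$ occupancy indicators independent with total mean at most $1$, and your union bound runs over arrival events rather than over (vertex, time) pairs, which is also what lets you work in continuous time at all. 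Each approach buys something: the paper's coupling needs no continuous-time apparatus and exploits an exact distributional identity, but its covering step loses a factor of two in the constant --- $k$ reds only force $k/2$ greens, so the argument as written establishes the lemma with $16\log n/\log\log n$, since the green bound at level $4\log n/\log\log n$ would not survive the union bound over $n^4$ pairs --- whereas your argument needs no auxiliary particles, genuinely attains the stated constant $8$, and isolates the only fact about the graph that is used, namely regularity via double stochasticity. For the downstream applications only the order $\log n/\log\log n$ matters, so both proofs serve equally well.
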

\begin{proof}
We will find it more convenient to analyse a similar process with $n$ green particles independently positioned at uniformly random vertices. We say that a red particle is `covered' if at least one green particle was placed at its starting position; note that each red particle is uncovered with probability $(1-1/n)^n<1/\ee$. Pair each covered red particle with a green particle starting in the same position. We couple the green process to the red process as follows. If a covered red particle is selected to move at a given time step, select the corresponding green particle and move it to the same vertex. Otherwise, select and move an unpaired green vertex uniformly at random.
	
Suppose that $k$ red particles occupy a given vertex at a given time step. Since the expected number of these that are uncovered is at most $k/\ee$, Markov's inequality gives a constant probability that at most $k/2$ of them are uncovered, and hence that at least $k/2$ green particles occupy that vertex at that time. Thus it suffices to prove that the probability of some vertex having more than $8\log n/\log\log n$ green particles at any time step is $O(n^{-3})$.
	
Since the green particles are in the stationary distribution, for any vertex $v$ and time $t$
the number of green particles at $v$ at time $t$ has distribution $\operatorname{Bin}(n,1/n)$.
For any $r \ge 3$ we have
\[ \prob{\operatorname{Bin}(n,1/n)=r} < r!^{-1}(1-1/n)^n < \prob{\operatorname{Po}(1)=r}, \]
so applying  \eqref{poisson-bounds} with $r=8\log n/\log\log n$ and $\lambda=1$ gives the probability bound
\[\exp\left(\frac{-8\log n}{\log\log n}(\log\log n-\log\log\log n+\log 8)-1\right)=n^{-8+o(1)}.\]
The result follows from a union bound over all vertices $v$ and $t\leq n^3$.
\end{proof}

Now we proceed to the analysis of the process.
As in the first regime, in each round 
we have a mixing phase of fake time $t^{\mathrm{blue}}_k$.
Again mixing succeeds with high probability, so we can assume that 
every fake blue moves at least $\alpha n$ times in each round.
For the hitting phases, we do not attempt to show that 
each is small but instead focus on bounding their total time. 

We will apply Lemma \ref{lem:halls} to bound the hitting phases,
replacing our previous $c=1/4$ in (ii) by $c_k = \max\{\frac{\log\log n}{16\log n},1/k\}$.
This is justified by Lemma \ref{lem:poisson2}: we can assume that at no point 
do we ever have more than $\frac{8\log n}{\log\log n}$ red particles at a given vertex.
We say $k$ is `small' if $c_k=1/k$ and `large' otherwise.
A convenient unit for measuring time will be $B := 1920rn\log n/\log\log n$;
we will call a period of $B$ fake steps a `block'.
For (i), in any $s'$ blocks, by Lemma \ref{lem:blues-move} at least $3k/4$
fake blue particles each move at least $s'B/4k = \frac{480nrs'\log n}{k\log\log n}$ times.
If $k$ is large, by Lemma \ref{lem:halls} with $s=5s'$, the probability of the hitting phase  
lasting for more than $s'$ blocks is at most $0.8^{5s'k/4}< z^{s'}$, where
$z=\exp(\frac{-4\log n}{\log\log n})$. 
If $k$ is small, we instead apply Lemma \ref{lem:halls} with 
$s=\floor{\frac{80s'\log n}{\log\log n}}$, and easily have $0.8^{sk/4}<z^{s'}$
since $k\geq 20$.

In other words, the number of blocks in any round is stochastically dominated
by a geometric random variable with parameter $1-z$, \ie to bound the number of blocks in
a round we generate independent Bernoulli($z$) variables until the first $0$. 
Our bound for the total of the hitting phases is $B$ times the total number of Bernoulli variables 
needed across all rounds. Since at most $\log_{4/3}(c_1\log n)$ rounds take place, the probability 
of needing more than $m=\log_{4/3}(c_1\log n)+\log\log n$ Bernoulli variables is at most 
$\prob{\operatorname{Bin}(m,z)\geq\log\log n}$, which can be bounded by $2^{m}z^{\log\log n}=\widetilde{O}(n^{-4})$. 
Thus, with suitably high probability the total time for this regime is $O(B\log\log n)=O(n\log n)$, as required.

\section{Annihilation lower bound}\label{sec:lower}

In this final section of the paper, we conclude the proof of Theorem \ref{thm:main} by establishing the lower bound.

\subsection{One-type annihilation} 

To illustrate the main ideas in a simpler setting, we start by considering one-type annihilation, as follows.

\begin{thm}\label{thm:lower-bound-one}
Let $G$ be a regular graph on $n$ vertices with spectral gap more than $1/3$. 
Consider one-type annihilation on $G$ and let $T$ be the time until only one particle remains.
Then $T \ge cn\log n$ with high probability and in expectation, where $c$ is an absolute constant.
\end{thm}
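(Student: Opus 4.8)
The plan is to show that one-type annihilation on an expander takes at least $cn\log n$ steps by tracking the number of particles and proving that each halving of the population requires $\Omega(n)$ steps with constant probability, so that accumulating $\Theta(\log n)$ halvings yields the claimed bound. Concretely, I would argue that there are constants $c_1,c_2,c_3>0$ such that, starting from any configuration of $k$ particles (for $k=n^x$ with $x<1$, but the same idea applies throughout), with probability at least $c_1$ we need at least $c_2 n$ steps to reduce the particle count from $k$ to $c_3 k$. Given such a statement, one partitions the process into $\Theta(\log n)$ consecutive epochs indexed by the population scale $k, c_3 k, c_3^2 k, \dots$, observes that the time spent in each epoch is (conditionally) at least $c_2 n$ with probability at least $c_1$ independently of the past, and then applies a concentration argument (a second-moment or Chernoff-type bound on a sum of independent Bernoulli-weighted contributions, exactly as in the proof of Theorem~\ref{thm:p=0}) to conclude that with high probability at least a constant fraction of the epochs each contribute $\Omega(n)$ steps. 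Summing gives $T\geq cn\log n$ with high probability, and the expectation bound follows similarly.

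The core technical step is the single-epoch lower bound, and the natural route is to couple the true annihilation process to a system of non-interacting lazy random walks (fake particles) over a window of $O(n)$ steps, during which the population changes by at most a constant factor so that true time and fake time are comparable. In this window, an annihilation in the true process forces a strong collision in the fake process, so it suffices to bound the number of fake collisions from above. The key idea is to control meetings by returns: the probability that two independent stationary walks meet within a short window is governed by the return probability of a single walk, which on an expander is controlled by the spectral gap via \eqref{converge}. Using a trajectory-reversal argument (as foreshadowed in the paper's overview), the expected number of collisions among $k$ fake particles over $c_2 n$ steps can be bounded, via linearity of expectation, by something like $O(k^2 \cdot \tfrac{1}{n} \cdot \tfrac{c_2 n}{k}) = O(c_2 k)$, since each particle takes $O(c_2 n / k)$ steps in this window and each step contributes collision probability $O(k/n)$ against the other $k$ particles. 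Choosing $c_2$ small then makes the expected number of collisions a small fraction of $k$, so by Markov's inequality with constant probability fewer than $(1-c_3)k$ annihilations occur, which is exactly the single-epoch statement.

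The main obstacle I anticipate is handling the initial distribution of the particles, which need not be stationary (indeed it is adversarial). The return-time and collision bounds above are cleanest when the walkers start from stationarity, but an adversary may place particles so as to accelerate early collisions; crucially, however, the excerpt restricts to graphs with spectral gap at least $1/3$, so mixing happens in $O(1)$ steps up to the relevant precision, and after a constant number of steps any configuration relaxes to approximate stationarity in the $L^2$ sense needed. I would therefore insert a short `settling' sub-window of $O(1)$ steps per particle at the start of each epoch, invoking \eqref{converge} to bring the conditional distribution of each walker to within a constant factor of uniform, and only then run the collision-counting argument. A secondary subtlety is that in the true process collisions remove particles, so the fake-to-true comparison degrades as collisions accumulate; this is precisely why the epoch is defined to end at a constant-factor reduction, keeping the coupling valid throughout. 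Finally, I would verify that the per-epoch events can be made (conditionally) independent enough to feed into the Chebyshev-type concentration, exactly as the contributions $X_i$ are handled in the proof of Theorem~\ref{thm:p=0}.
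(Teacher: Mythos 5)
Your overall skeleton matches the paper's: the same epoch decomposition, the same single-epoch key statement (constants $c_1,c_2,c_3$ with probability $\ge c_1$ of needing $\ge c_2 n$ steps to drop from $k$ to $c_3 k$ particles), a coupling to non-interacting fake walks, and a Markov-inequality collision count of order $k/n$ per step once the walkers are spread out. However, there is a genuine gap at exactly the point you flag as the ``main obstacle'': your treatment of the adversarial initial configuration does not work, and it is the heart of the matter. Your claim that a spectral gap of $1/3$ lets any configuration relax ``in $O(1)$ steps'' to the $L^2$ precision needed is false. By \eqref{converge} the $L^2$ distance contracts by a factor $\mu_2\le 2/3$ per lazy step, but it starts at $\Theta(1)$ for a point mass, and the collision-per-step bound of $O(k/n)$ requires distance of order $\sqrt{k}/n$ (compare \eqref{init-dist} in Lemma \ref{lem:hitting}); that takes $\Theta(\log n)$ steps per particle, hence a mixing window of $\Theta(k\log n)$ total steps. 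During that whole window your collision-counting bound is unavailable: an adversary can start particles in tight clusters so that pairs collide with constant probability in their first few moves, and your proposal has no mechanism to prevent the population from collapsing below $c_3 k$ inside the mixing window itself, in $o(n)$ steps. Defining the epoch to end at a constant-factor reduction does not rescue this, because the reduction may well occur entirely within the unmixed phase.

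This is precisely what the paper's proof is built to handle, and what is absent from your proposal. The paper bounds annihilations during the mixing phase by returns to the starting set $A$: Lemma \ref{lem:no-return} shows a walker started in $A$ non-trivially revisits $A$ within the mixing window with probability at most $2\mu_2-1+o(1)$, and Lemma \ref{lem:meeting-vs-return} (the trajectory-reversal argument) shows that collisions between two particles \emph{neither} of which has returned to $A$ are also controlled by this same returning probability. Adding the three loss terms (returners, particles killed by returners, mutual collisions of non-returners) gives expected losses at most $3(2\mu_2-1+o(1))k$, which is strictly less than $k$ exactly when $\mu_2<2/3$. This reveals the second misunderstanding in your proposal: the hypothesis ``spectral gap at least $1/3$'' is not there to make mixing fast (any constant gap gives $O(\log n)$ mixing); it is there to make $3(2\mu_2-1)<1$, so that a constant fraction of particles survives mixing with constant probability. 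Your use of trajectory reversal only in the post-mixing phase (where a simple union bound over pairs of near-stationary walkers already suffices) misses its essential role in the unmixed phase. The epoch-accumulation and concentration part of your argument is fine, but without a substitute for Lemmas \ref{lem:no-return} and \ref{lem:meeting-vs-return} the single-epoch claim is unproven.
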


The main action in the proof will take place while the number $k$ of remaining particles 
satisfies $n^{1-x}\geq k\geq n^x$ for some constant $x\in(0,1/2)$. 
The key step is to show that in this regime there are constants $c_1,c_2,c_3>0$
so that starting from any configuration $A$ of $k$ particles, 
with probability at least $c_1$ we need at least $c_2 n$ steps
to reduce the number of particles to $c_3 k$.
This will suffice to prove the result, as then with high probability
$\Omega(\log n)$ such intervals take $\Omega(n)$ steps to traverse. 
The proof of this key step has three main ideas:
\begin{enumerate}[noitemsep]
\item We consider a mixing phase (with the usual coupling of true and fake particles) 
and use the explicit spectral gap to argue that only a small constant proportion 
of the particles return to the starting set $A$ during the mixing phase.
\item Considering the survival during the mixing phase 
of particles not returning to $A$, we note that 
(a) not many can be annihilated by the few particles that do return to $A$, 
and show that
(b) annihilations between two particles both not returning to $A$ can be bounded
similarly to those returning to $A$ by a trajectory reversal argument.
\item Once we have enough surviving mixed particles we are essentially done, 
as then at each step collisions occur with probability $O(k/n)$, so with constant probability 
$\Omega(n)$ steps are required for $\Omega(k)$ collisions.
\end{enumerate}

Before starting to fill in the details of this sketch, 
we record some further facts on mixing and hitting for random walks.
We consider the lazy random walk $X_0,X_1,\dots$ on some multigraph on $n'$ vertices,
with stationary distribution $\pi$ and $1-\mu_2$ bounded away from zero.
\begin{enumerate}[(a),noitemsep]
\item The return time $H_v^+$ is the first positive time at which the walk started
at $X_0=v$ reaches $v$; it has mean $\mean{H_v^+}  = 1/\pi_v$ 
(see \cite[Lemma 5 of Chapter 2]{AFbook}).
\item  A strong stationary stopping time $\tau$ is a stopping time such that
$X_\tau$ is stationary and independent of $\tau$. By \eqref{prob-bounds} and
\cite[Lemma 24.7]{LP} there is such $\tau$ with $\mean{\tau}=O(\log n')$. 
By ignoring the first $\log^2 n'$ steps,
there is such $\tau$ with $\tau \ge \log^2 n'$ and $\mean{\tau}=O(\log^2 n')$.
\item The expected hitting time of $v$ starting from $\pi$ is
\begin{equation}\label{H-pi-Z}\mathbb{E}_\pi(H(v))=\pi_v^{-1}Z_{vv},
\text{ where } Z_{vv} := \sum_{t\geq 0}(P^t_{vv}-\pi_v) \le (1-\mu_2)^{-1}.
\end{equation}
Here the equality is \cite[Lemma 11 of Chapter 2]{AFbook},
and the inequality, noted in \cite[Lemma 3]{CEOR}, is immediate from 
$|P^t_{vv}-\pi_v| \le \mu_2^t$, which is the general form of \eqref{converge}.
\end{enumerate}

We also require some notation and a simple result for `collapsed chains'
(see \cite[Section 7.3 of Chapter 2 and Corollary 27 of Chapter 3]{AFbook}).
Given $A\subset V(G)$, we write $G/A$ for the multigraph obtained by contracting $A$
to a single vertex $v_A$. This is defined on the vertex set $V(G/A) = (V(G) \setminus A) \cup \{v_A\}$.
For each $x \in V(G)$ write $x_A = v_A$ if $x \in A$ or $x_A = x$ otherwise.
Then for each edge $xy$ of $G$ we have an edge $x_A y_A$ of $G/A$, 
included with multiplicity and allowing loops at $v_A$. The variational characterisation
of eigenvalues implies $\mu_2(G_A)\leq \mu_2(G)$,
\ie the spectral gap of $G_A$ is at least as good as that of $G$. 

Throughout the remainder of this subsection we fix $G$ as in Theorem \ref{thm:lower-bound-one}.
By decreasing $c$ we can assume that $n$ is sufficiently large.
We are now ready to implement the first part of the above sketch,
with the following definition and accompanying lemma.

\begin{defn} \label{pAT}
Given $A\subset V(G)$ and a time $T$,
we let $p_{A,T}$ be the probability that a lazy random walk on $G$
starting from the uniform distribution on $A$ visits $A$ before time $T$
after at least one non-lazy step.
\end{defn}

\begin{lem}\label{lem:no-return}
Fix $x>0$ and $A\subset V(G)$ with $|A|<n^{1-x}$. 
Then $p_{A,\log^2 n} \leq 2\mu_2-1+o(1)$.
\end{lem}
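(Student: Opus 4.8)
The plan is to pass to the collapsed chain $G/A$ and analyse the return probability of the contracted vertex $v_A$. Its stationary mass is $\pi_{v_A}=|A|/n$ (for any $A$, since contraction preserves all degrees), and I set $\theta:=2\mu_2(G)-1$, the second eigenvalue of the \emph{non-lazy} walk on $G$; since lazy and non-lazy eigenvalues are related by $\mu=(1+\nu)/2$ and $\mu_2(G/A)\le\mu_2(G)$, the non-lazy walk $Y$ on $G/A$ has all non-principal eigenvalues at most $\theta\le 1/3$. First I would reduce $p_{A,\log^2 n}$ to a return probability for $Y$: observing the lazy walk only at its non-lazy steps yields exactly $Y$, and since at most $T:=\log^2 n$ non-lazy steps can occur within $T$ lazy steps, a lazy return to $A$ after at least one non-lazy step within time $T$ forces $Y$ (started at $v_A$) to revisit $v_A$ within $T$ steps. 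Thus $p_{A,\log^2 n}\le\rho_T:=\sum_{t=1}^T f_t$, where $f_t$ is the first-return probability of $Y$ to $v_A$ at step $t$. Note that the uniform-on-$A$ starting distribution is automatically handled, as it collapses to a point mass at $v_A$.

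The key step is to bound $\rho_T$ through a renewal inequality rather than a union bound. Writing $u_t=(W')^t_{v_Av_A}$ for the $t$-step return probability of $Y$ and $G_T=\sum_{t=0}^T u_t$ for the expected number of visits to $v_A$ in $[0,T]$, the first-return decomposition $u_t=\sum_{s=1}^t f_s u_{t-s}$ together with $G_{T-s}\le G_T$ gives $G_T=1+\sum_{s=1}^T f_s G_{T-s}\le 1+\rho_T G_T$, hence
\[ \rho_T\le 1-\frac{1}{G_T}. \]
This is the crucial point: the naive estimate $\rho_T\le\sum_{t\ge1}u_t$ only yields $\rho_T\le\theta/(1-\theta)+o(1)\approx 1/2$, which is too weak, whereas discounting repeated visits through $G_T$ sharpens this to the required $\theta+o(1)$.

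It then remains to show $G_T\le(1-\theta)^{-1}+o(1)$. Here I would use the spectral decomposition $u_t=\pi_{v_A}+\pi_{v_A}\sum_{i\ge2}\nu_i^t g_i(v_A)^2$ of the reversible chain $Y$, where $\nu_i\le\theta$ for $i\ge2$ (as $\nu_2(G/A)\le\nu_2(G)=\theta$) and $\pi_{v_A}\sum_{i\ge2}g_i(v_A)^2=1-\pi_{v_A}\le1$. Summing over $t$, the stationary part contributes $T\pi_{v_A}=\log^2 n\cdot|A|/n\le\log^2 n\cdot n^{-x}=o(1)$. For each non-principal mode, $\sum_{t=1}^T\nu_i^t\le\theta/(1-\theta)$: when $\nu_i\in(0,\theta]$ this follows from monotonicity of $x\mapsto x/(1-x)$, and when $\nu_i\le0$ the partial sums are non-positive, by pairing consecutive terms as $\nu_i^{2j-1}+\nu_i^{2j}=\nu_i^{2j-1}(1+\nu_i)\le0$. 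Hence $G_T\le 1+\theta/(1-\theta)+o(1)=(1-\theta)^{-1}+o(1)$, and combining with the renewal bound gives $\rho_T\le\theta+o(1)=2\mu_2(G)-1+o(1)$, as required.

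The main obstacle is precisely this renewal step: one must control the \emph{first} return probability in a regime where the per-step return probabilities $u_t$ sum to a constant bounded away from zero, so a direct summation fails. A secondary technical subtlety is that the non-lazy walk $Y$ may have eigenvalues near $-1$ when $G$ is close to bipartite; this is harmless, since such modes only \emph{decrease} $G_T$, but it is exactly why the argument is phrased through the non-lazy chain and its signed spectral expansion rather than through the absolute mixing bound \eqref{converge}.
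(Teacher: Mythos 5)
There is a genuine gap, and it sits exactly at the step you yourself call crucial. From the renewal decomposition you correctly derive $G_T\le 1+\rho_T G_T$, i.e.\ $G_T(1-\rho_T)\le 1$, but this rearranges to $\rho_T\ge 1-1/G_T$, \emph{not} $\rho_T\le 1-1/G_T$: the inequality is inverted, and what you have actually proved is a (useless) lower bound on the first-return probability. The claimed inequality is in fact false for essentially every chain: at the first time $t_0$ at which return is possible one has $\rho_{t_0}=f_{t_0}=u_{t_0}$ and $G_{t_0}=1+u_{t_0}$, so the claim reads $u_{t_0}\le u_{t_0}/(1+u_{t_0})$, which fails whenever $u_{t_0}>0$. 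The structural reason is that $G_{T-s}\le G_T$ bounds the post-return visit counts from \emph{above}, which can only make the first-return probability look larger; the sharply truncated Green's function genuinely cannot upper-bound $\rho_T$ in this form (a last-visit decomposition gives the same one-sided relation). Consequently the valid parts of your argument (the reduction to the embedded non-lazy chain $Y$ on $G/A$, which is fine, and the spectral bound on $G_T$) yield at best the naive bound $\rho_T\le G_T-1\le\theta/(1-\theta)+o(1)$ that you correctly identified as too weak.

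The strategy is salvageable by a local repair: replace truncation by geometric damping. With $U(z)=\sum_{t\ge0}u_tz^t$ and $F(z)=\sum_{t\ge1}f_tz^t$, the renewal identity $U(z)=1/(1-F(z))$ holds exactly for $0<z<1$, and $\rho_T\le z^{-T}F(z)=z^{-T}\bigl(1-1/U(z)\bigr)$. Your spectral computation then applies verbatim to $U(z)$ in place of $G_T$: taking $1-z=1/\log^3n$ gives $z^{-T}=1+o(1)$, the stationary mode contributes $\pi_{v_A}/(1-z)\le n^{-x}\log^3n=o(1)$, and each non-principal mode contributes $c_i/(1-\nu_iz)\le c_i/(1-\theta z)\le c_i\bigl((1-\theta)^{-1}+o(1)\bigr)$ (negative eigenvalues now need no separate treatment), so $U(z)\le(1-\theta)^{-1}+o(1)$ and hence $\rho_T\le\theta+o(1)$, as required. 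With this fix your route is genuinely different from the paper's, which avoids renewal theory altogether: the paper computes the mean return time $\mean{H_A^*}\approx 2n/|A|$ exactly via Kac's formula on the collapsed chain, separately upper-bounds it by $\log^2 n+(1-p_{A,\log^2 n})\bigl(\mean{\tau}+(1-\mu_2)^{-1}n/|A|\bigr)$ using a strong stationary time $\tau$ and the identity \eqref{H-pi-Z}, and rearranges; there the factor $2$ in $2\mu_2-1$ arises from the non-lazy doubling of the return time, whereas in your argument it enters through the eigenvalue relation $\nu=2\mu-1$ between the non-lazy and lazy chains.
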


\begin{proof}
Let $H_A^+$ be the first return time to $A$ 
of a lazy random walk starting from the uniform distribution on $A$.
We consider $H_A^*$ defined in the same way 
except that we require at least one non-lazy step.
We claim that
\begin{equation}\label{HA*}
\mean{H_A^+} = n/|A| \quad \text{ and } \quad \mean{H_A^*}=1+2n/|A|.
\end{equation}
To see this, we consider the natural coupling of such a random walk to a random walk in $G/A$
started from $v_A$, observing that $H_A^+$ has the same distribution as $H_{v_A}^+$.
The stationary probability of $v_A$ is $|A|/n$, so $\mean{H_A^+} = \mean{H_{v_A}^+} = n/|A|$.
Writing $E$ for the event that the first step is lazy, we have
$\mean{H_A^+} = \mean{H_A^+ 1_E} + \mean{H_A^+ 1_{E^c}} = \frac{1}{2} + \mean{H_A^+ 1_{E^c}}$.
Also, $H_A^*$ is obtained by waiting for the first non-lazy step, which takes expected time $2$,
then adding $\mean{H_A^+ 1_{E^c} \vert 1_{E^c} } = 2\mean{H_A^+ 1_{E^c}}$.
Therefore $\mean{H_A^*} = 2 + 2\mean{H_A^+ 1_{E^c}} = 2 + 2(\mean{H_A^+}-1/2) = 1 + 2n/|A|$.

Now we let $\tau$ be a strong stationary stopping time
with $\tau \ge L := \log^2 n$ and $\mean{\tau}=O(L)$.
We consider the estimate
\[  \mean{H_A^*} = \mean{H_A^* 1_{H_A^* < L}} + \mean{H_A^* 1_{H_A^* \ge L}}
\le L + (1-p_{A,L})( \mean{\tau}+(1-\mu_2)^{-1}n/|A|), \]
where for the second term, given that the return time is larger than $L$,
we bound it by waiting until time $\tau$ and then using \eqref{H-pi-Z}.
Combining with \eqref{HA*}, as $\mean{\tau} = O(L) = o(n/|A|)$,
we deduce $p_{A,L} \le 2\mu_2 - 1 + o(1)$.
\end{proof}

The next lemma implements the second part of the above sketch,
in which we consider the probability that a fixed particle starting at some $v$ in $A$
hits some other particle before either has returned to $A$. For later use in analysing
two-type annihilation, we prove a more general lemma in which we only consider collisions
with particles starting in some set $B$ (this will correspond later to the other colour).

\begin{defn} \label{pvBT}
Given $B\subset V(G)$, $v \in V(G) \setminus B$ and a time $T$,
we let $p_{v,B,T}$ be the probability that a lazy random walk on $G$
starting from $v$ visits $B$ before time $T$.
\end{defn}

\begin{lem}\label{lem:meeting-vs-return}
Let $C>0$, $x \in (0,1/2)$ and $A\subset V(G)$ with $n^x<k=|A|<n^{1-x}$. 
Fix $v \in A$ and $B\subseteq A\setminus \{v\}$. 
One walker starts at each vertex of $A$ and at each time step 
a randomly selected walker takes a lazy random walk step on $G$.
For each $w\in B$, let $E_{w}$ be the event that the walkers started from $v$ and $w$
collide within $Tk$ steps, where $T := C\log n$, 
with the former not having reached $B$ and the latter not having returned to $B$.
Let $X = \sum_{w \in B} 1_{E_w}$. Then $\mean{X}\leq (1+o(1))p_{v,B,3T}$.
\end{lem}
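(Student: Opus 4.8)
The plan is to condition on the \emph{schedule} $\sigma$ (the sequence recording which walker is selected at each of the $Tk$ global steps) and to exploit reversibility of the lazy walk on the regular graph $G$ to turn each meeting of the $v$-walker with a $w$-walker into a single-walk first passage to $B$. Two preliminary observations drive everything. First, given $\sigma$ the individual walks are independent, and since the lazy transition matrix $\bm P=\tfrac12\bm I+\tfrac1{2d}\bm A$ is symmetric, any path and its reverse have equal probability. Second, I set up a good event $\mathcal G$ that every walker makes at most $\tfrac32 T$ steps: each walker makes $\operatorname{Bin}(Tk,1/n)$-many, with mean $T$, so by a Chernoff bound $\prob{\mathcal G^c}\le k\exp(-\Theta(T))$. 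As $X\le|B|\le k$, the contribution $\mean{X\mathbf 1_{\mathcal G^c}}\le k\prob{\mathcal G^c}$ is negligible next to $p_{v,B,3T}=\Omega(\log n/n)$, the latter holding whenever $B\neq\varnothing$ since it dominates the chance of hitting a single target within $3T$ steps.

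The main step is the reversal. Fix $\sigma$ and a vertex $w\in B$, and decompose $E_w\cap\mathcal G$ according to the first time the $v$- and $w$-walkers coincide. At that moment the $v$-walker has made some $a\le\tfrac32 T$ steps along a path $s_0=v,\dots,s_a=z$ and the $w$-walker some $b\le\tfrac32 T$ steps along $r_0=w,\dots,r_b=z$. The constraints defining $E_w$ force $s_0,\dots,s_a\notin B$ and $r_1,\dots,r_b\notin B$ while $r_0=w\in B$, so the concatenation $Q=(s_0,\dots,s_a=r_b,r_{b-1},\dots,r_0)$ is a walk from $v$ to $w$ of length $a+b\le 3T$ meeting $B$ only at its endpoint, \ie a first-passage-to-$B$ path. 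By independence and reversibility, $\prob{v\text{ follows }s}\,\prob{w\text{ follows }r}=\prob{Q}$.

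The crux is that summing these first-coincidence decompositions over all decompositions and all $w$ does not overcount, and I would argue this via an injectivity claim: for each fixed $\sigma$ the map from valid decompositions to first-passage paths $Q$ is injective. Indeed, $Q$ determines its endpoint $w=r_0$, and given $Q$ one runs the $v$-walker forward along $Q$ and the $w$-walker backward along $Q$ under $\sigma$; their first coincidence occurs at a \emph{unique} pair of step counts. A decomposition of $Q$ is valid exactly when this first coincidence sits at the shared endpoint $z$ (at step counts summing to $|Q|$); any other split would mean the true first collision happens earlier, producing a strictly shorter concatenation and hence a different path. Consequently each first-passage path arises from at most one decomposition, giving $\sum_{w}\prob{E_w\cap\mathcal G\mid\sigma}\le\sum_{Q}\prob{Q}=p_{v,B,3T}$, the last sum running over first-passage-to-$B$ paths of length at most $3T$. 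Averaging over $\sigma$ yields $\sum_w\prob{E_w\cap\mathcal G}\le p_{v,B,3T}$, and adding the negligible $\mathcal G^c$ contribution gives $\mean{X}\le(1+o(1))p_{v,B,3T}$.

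I expect the injectivity claim to be the main obstacle: one must verify carefully that trajectory reversal, applied across a random interleaving of the two walks, does not double-count concatenated paths, and in particular that the lattice of $(v,w)$ step counts meets the ``meeting anti-diagonal'' of $Q$ exactly once. By contrast, the reversibility identity, the Chernoff control of step counts giving the factor $3$ in $3T$, and the crude lower bound on $p_{v,B,3T}$ are all routine.
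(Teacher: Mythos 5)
Your core reversal-and-concatenation argument is sound and takes a genuinely different route from the paper. The paper passes to a poissonised model (each particle takes $\Po(1/k)$ lazy steps per time step) and compares \emph{occupation times} rather than event probabilities: it introduces $Y$ (the time both particles sit at the collision site after the first collision satisfying $E_w$) and $Y'$ (the time, counted at even steps only, that a single walker sits at $w$ after first hitting $B$ there), shows each equals $(1\pm o(1))k$ times the corresponding event probability, and maps each contribution to $\mean{Y}$ at time $t$ to a contribution to $\mean{Y'}$ at time $2t$ by following $S$ and then the reverse of $S'$. The factor-$2$ time dilation makes the folding point of the concatenated trajectory canonical (always the midpoint); this is the paper's resolution of exactly the overcounting issue you identify. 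Your resolution --- condition on the schedule $\sigma$ and observe that the monotone staircase of step-count pairs meets each anti-diagonal $a+b=\ell$ in exactly one point, so each concatenated path admits a unique admissible split --- is correct and arguably cleaner: it stays in discrete time, needs no poissonisation, and avoids the holding-time estimates entirely. The injectivity claim you flag as the main obstacle does go through: distinct valid triples $(w,s,r)$ yield concatenations differing in endpoint, in length, or (by uniqueness of the split at that length under $\sigma$) in the paths themselves, and validity of a split is incompatible with validity of any proper extension.

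The genuine gap is in the error term, not in the injection. You need $k\prob{\mathcal G^c}=o(p_{v,B,3T})$, and you assert $p_{v,B,3T}=\Omega(\log n/n)$ ``whenever $B\neq\varnothing$'' as routine. At the stated level of generality this is false: the lemma allows an arbitrary constant $C>0$, while a regular graph with spectral gap $1/3$ can have diameter a large constant times $\log n$, so $B$ may lie entirely outside the ball of radius $3T=3C\log n$ around $v$, giving $p_{v,B,3T}=0$ even though your Chernoff error $k\exp(-\Theta(T))=kn^{-\Theta(C)}$ (and indeed $\mean{X}$ itself) is strictly positive. Even when $B$ is reachable, a bound of order $T/n$ needs $3T$ to exceed the mixing time (\ie $C$ at least a constant determined by $\mu_2\le 2/3$) together with an expected-visits argument: $\Omega(T/n)$ expected visits to a target after mixing, versus $O\bigl((1-\mu_2)^{-1}\bigr)$ expected visits per first visit. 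This is precisely what the paper's per-$w$, purely multiplicative comparison $\prob{E_w}\le(1+o(1))\prob{E'_w}$ is designed to sidestep: it never requires any lower bound on a hitting probability. As written, your proof establishes the lemma only for $C$ exceeding an absolute constant --- which does suffice for the paper's application, and the paper's own $o(1)$ bookkeeping quietly needs a similar restriction --- but it does not prove the statement for all $C>0$. One further slip: each walker makes $\operatorname{Bin}(Tk,1/k)$ steps (mean $T$), not $\operatorname{Bin}(Tk,1/n)$.
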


\begin{proof}
As in Section \ref{sec:lonely}, it will be convenient to consider a Poissonised model,
where each particle independently takes $\Po(1/k)$ lazy random walk steps in each time interval.
Then with high probability there are $Tk$ movements within $(1+o(1))Tk$ steps
and each particle takes $(1+o(1))T$ steps.
Let $E'_w$ be the event that the particle started at $v$ reaches $B$ within $3T$ time intervals, 
with $w$ being the first vertex of $B$ it reaches. Clearly $p_{v,B,3T}=\sum_{w\in B}\prob{E'_w}$, 
and so it is sufficient to prove that $\prob{E_w}\leq (1+o(1))\prob{E'_w}$ for each $w \in B$.
Here we can replace $E_w$ by the same event with the additional restriction 
that neither particle moves away from the collision site in the same time interval 
that the collision occurs,
\ie we may assume that the two particles coincide at some integer time.
Indeed, it is not hard to see that this change affects the overall probability by a factor $1+O(1/k)=1+o(1)$.

Now we will implement the trajectory reversal strategy mentioned in the proof sketch.
This will let us compare $\prob{E_w}$ with $\prob{E'_w}$ indirectly 
via the following closely related `occupancy variables'.
We relate $E_w$ to the variable $Y$, defined by considering the first $1.5T$ time intervals, 
setting $Y=0$ if there is no collision satisfying $E_w$, or otherwise considering the first such collision,
and letting $Y$ be the number of time intervals that both particles remain at the collision site.
We claim that $\mean{Y}=(1-o(1))k\prob{E_w}$. This holds,
as if a collision satisfying $E_w$ occurs, then it does so within time $T$, 
and within the remaining time
period of $Y$ the expected time before either particle leaves is $(1-o(1))k$, 
as the rate of either particle leaving is $(1/2)(1/k + 1/k)$ by Poisson splitting and combining.
Similarly, we define $Y'$ with $\mean{Y'}=(1-o(1))k\prob{E'_w}$ by considering 
the first $3T$ time intervals, 
setting $Y'=0$ if the walker started at $v$ does not first hit $B$ at $w$,
or otherwise letting $Y'$ be the number of time intervals ending at an even time 
that the walker remains at $w$ after this first hit.

Now consider any trajectories $S$, $S'$ for the particles started at $v$, $w$
that lead to some time interval $[t-1,t]$ being counted in the random variable $Y$.
Following $S$ and then the reverse of $S'$ gives a trajectory $S''$
with $\prob{S''}=\prob{S \& S'}$ that leads to $[2t-1,2t]$ being counted for $Y'$.
Summing over such trajectories we deduce $\mean{Y}\leq\mean{Y'}$,
and so $\prob{E_w}\leq (1+o(1))\prob{E'_w}$, as required.
\end{proof}

We conclude this section with the proof 
of the lower bound for one-type annihilation.

\begin{proof}[Proof of Theorem \ref{thm:lower-bound-one}]
Let $G$ be a regular graph on $n$ vertices with spectral gap more than $1/3$,
\ie $\mu_2 = 2/3 - c$ for some $c>0$. 
Consider one-type annihilation on $G$ using lazy random walks
from any starting configuration and let $T$ be the extinction time.
We fix $x\in(0,1/2)$ and consider the regime while the number $k$ of remaining particles 
satisfies $n^{1-x}\geq k\geq n^x$. 
As discussed at the beginning of the section, it suffices to show the following `key step':
in this regime there are constants $c_1,c_2,c_3>0$
so that starting from any configuration $A$ of $k$ particles, 
with probability at least $c_1$ we need at least $c_2 n$ steps
to reduce the number of particles to $c_3 k$.

Following the sketch at the beginning of the section, we consider a mixing phase
of $\Theta(k\log n)$ steps (with the usual coupling of true and fake particles).
With high probability each fake particle takes $\Theta(\log n)$ steps in this phase.
By Lemma \ref{lem:no-return} the expected number of such particles
that return to $A$ is at most $(2\mu_2-1+o(1))k = (1/3 - 2c + o(1))k$.
The same bound applies to the number of particles annihilated by such particles.
By Lemma \ref{lem:meeting-vs-return}, taking $B=A\setminus\{v\}$ and summing over $v$,
the same bound applies to the expected number of such particles that collide 
with another particle before either has returned to $A$. Excluding these three sets
leaves a set of particles surviving the mixing phase with expected size at least $(6c-o(1))k$.
Letting $E_1$ be the event that at least $2ck$ particles survive the mixing phase,
we have $\prob{E_1} > 2c$.

After the mixing phase, at each time step we bound the probability of a collision of true particles
by the probability of a collision of fake particles, which is at most $2k/n$, say. 
We let $E_2$ be the event that there are at least $ck$ such collisions
in the $c^2 n/2$ steps following the mixing phase. 
Then $\prob{E_2} \le c$ by Markov's inequality, so $\prob{E_1 \cap E_2^c} > c$.
On this event, at least $ck$ true particles have survived, so in the $c^2 n/2$ steps
following the mixing phase, by Chernoff, with high probability 
there are at least $c^3 n/4$ steps by true particles. Thus we have 
established the required key step with $c_1=c_3=c$ and $c_2=c^3/4$.
\end{proof}

\subsection{Two-type annihilation}

Now we consider the lower bound for two-type annihilation, 
which introduces several complications not seen in the one-type model.
Firstly, we need to adapt the previous argument to allow for colours of differing speeds.
Secondly, the potential to have multiple particles at the same location causes two issues:
\begin{enumerate}[nosep] 
\item The events that $a$ first returns to $A$ at the location of a given other particle $b$ 
are not disjoint for two different choices of $b$ at the same vertex,
\item Our previous argument for particles escaping from $A$ works for a particle chosen
from a uniformly random occupied site, so does not say much about escape of a uniformly
random particle if the particles are concentrated at `bad' sites in $A$.
\end{enumerate}

Our approach is to first consider \emph{representative} walks 
obtained by starting a single particle at each occupied site.
The argument from the one-type model can be adapted
to show that with high probability a positive proportion of these representatives 
are unlikely to encounter any opposite-coloured representatives in $\Omega(n)$ steps. 
The key idea is that we can then use Reimer's inequality \cite{Rei00}
to argue that when the additional particles on multiply-occupied sites are taken into account, 
an $n^{-o(1)}$ proportion of these particles still survive.
While we cannot afford repeated loss of an $n^{-o(1)}$ factor,
we then show that the surviving particles are sufficiently well-behaved 
that we can thereafter follow the approach used for the one-type process.

\subsubsection{Very slow reds}

Our argument requires an  adaptation of the trajectory reversal argument 
in Lemma \ref{lem:meeting-vs-return} which breaks down when the red particles are very slow.
We therefore start by disposing of the case $p \le n^{-2/3}$, which is not hard to handle more directly,
and in fact our result here is stronger, as no quantitative assumption about expansion is needed:
it applies to any expander sequence. 

The first observation for this regime is that with high probability at most $n^{1/3}\log^2 n$ red movements occur
during the first $n\log n$ steps. We consider stages where $k=n^c$ particles of each colour remain with $c \ge 1/2$.
If we have not yet taken enough steps for the required lower bound 
then the red particles occupy at least $(1-n^{-0.1})k$ distinct vertices.
The following lemma will therefore complete the proof for this regime.

\begin{lem} \label{lem:slowreds}
There exist constants $c_1,c_2,c_3$ and $n_0$ (depending only on $\mu_2(G)$) with the following property. 
Let $n\geq n_0$ and $p\leq n^{-2/3}$. Fix a starting state with $k=n^c$ particles of each colour, where $1/2\leq c\leq 3/4$, 
arranged arbitrarily subject to having at least $(1-1/\log n)k$ vertices occupied by red particles. 
Then with probability at least $c_1$, at least $c_2n$ steps are required to reduce to $c_3k$ particles of each colour.
\end{lem}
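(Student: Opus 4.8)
The plan is to exploit the fact that when $p\le n^{-2/3}$ the red particles are essentially frozen: by a Chernoff bound the reds move in total at most $n^{1/3}\log^2 n=o(k)$ times during the first $c_2n$ steps, so up to an $o(k)$ error the set $R$ of red-occupied vertices stays equal to its initial value $R_0$, with $|R_0|\ge(1-1/\log n)k$. Since reds barely move, a red is annihilated essentially when a blue first enters its vertex, so the number of annihilations in the first $c_2n$ steps is — up to the $o(k)$ correction coming from the few red moves and the at most $k/\log n$ multiply-occupied red vertices — at most the number of vertices of $R_0$ visited by some blue. I will therefore argue on the complement: with probability bounded below by an absolute constant, at least $c_3k$ vertices of $R_0$ are never visited by any blue, which leaves at least $c_3k$ reds and hence (annihilations being balanced, each removing one red and one blue) at least $c_3k$ particles of each colour after $c_2n$ steps, giving the required lower bound on the time to reduce to $c_3k$.

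To estimate coverage I would couple the true blues to independent non-interacting \emph{fake} blues exactly as in the proofs of Theorems \ref{thm:persistent} and \ref{thm:lower-bound-one}: a true blue that is annihilated was, at that instant, sitting with its still-coupled fake partner on a red vertex, so each annihilation is witnessed by a distinct vertex of $R_0$ covered by a fake walk. Hence it suffices to show that, writing $U(v)$ for the event that no fake blue ever visits the fixed vertex $v$ within its allotted steps, we have $\sum_{v\in R_0}\prob{U(v)}\ge 2c_3k$. Because the fake walks are independent, $\prob{U(v)}=\prod_b(1-h_{b,v})$, where $h_{b,v}$ is the probability that fake blue $b$ visits $v$. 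The two relevant quantities are $b_v$, the number of blues whose starting vertex $u_b$ is adjacent to $v$, and the total number $\sum_b m_b\le c_2n$ of fake steps.

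The heart of the matter is a uniform lower bound $\prob{U(v)}\ge\exp(-O(b_v/d)-O(c_2))$. A blue starting adjacent to $v$ avoids $v$ with probability at least $1-1/d-o(1)$: it must not pick $v$ on the uniform step on which it first leaves its starting vertex, and must avoid $v$ thereafter. A blue starting far from $v$ contributes a factor $1-O(m_b/n)$ from wandering, where \eqref{converge} and the expansion of $G$ control the early returns to $v$. Multiplying, $\prob{U(v)}\ge(1-1/d)^{b_v}\exp(-O(\sum_b m_b/n))\ge\exp(-2b_v/d-O(c_2))$. Now comes the step that neutralises an adversarial, possibly highly concentrated, blue configuration: since each blue is adjacent to at most $d$ reds, $\sum_{v\in R_0}b_v=\sum_b(\text{red-degree of }u_b)\le kd$, so the average of $b_v/d$ over $R_0$ is at most $1$. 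Convexity of $x\mapsto e^{-2x/d}$ and Jensen's inequality give $\sum_{v\in R_0}e^{-2b_v/d}\ge k\,e^{-2}$, whence $\sum_{v\in R_0}\prob{U(v)}\ge e^{-O(c_2)}k\,e^{-2}\ge 2c_3k$ for a suitable absolute constant $c_3$ and $c_2$ small. Finally, since the number of uncovered vertices lies in $[0,k]$ and has expectation at least $2c_3k$, the Paley--Zygmund (reverse Markov) inequality yields that with probability at least some $c_1>0$ at least $c_3k$ vertices of $R_0$ remain uncovered, which is what we need.

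I expect the main obstacle to be making the per-vertex estimate $\prob{U(v)}\ge\exp(-O(b_v/d)-O(c_2))$ fully uniform over all starting positions: blues beginning at distance two or three from $v$, or several blues clustered near $v$, can inflate $h_{b,v}$ through short-range returns, and controlling these contributions genuinely requires the expansion of $G$ (via \eqref{converge}, or return-time estimates as in \eqref{H-pi-Z}) rather than just the global edge count $\sum_v b_v\le kd$. The convexity step is precisely what makes concentration harmless — piling many blues onto the neighbourhood of a single red can fully cover that red but only a bounded number of reds, so the surviving fraction stays bounded below — and I regard isolating this mechanism as the conceptual crux, with the frozen-reds reduction and the balanced bookkeeping being routine.
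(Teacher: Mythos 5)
Your outer scaffolding is sound: the frozen-reds reduction, the coupling to non-interacting fake blues, the bookkeeping of multiply-occupied sites, and the final reverse-Markov step are all fine, and indeed the paper's own proof reduces to the same question of lower-bounding $\prod_b(1-h_{b,v})$ for many red sites $v$. But the heart of your argument --- the uniform estimate $\prob{U(v)}\ge\exp(-O(b_v/d)-O(c_2))$ --- is false on general regular expanders, and this is exactly the difficulty the lemma exists to overcome. The claim that a blue starting adjacent to $v$ hits $v$ with probability only $1/d+o(1)$ presupposes locally tree-like structure, which a spectral gap does not provide. For instance, let $v$ lie in a near-clique $K$ of about $d$ vertices, each with one edge leaving $K$: a walk started anywhere in $K$ hits $v$ before leaving $K$ with probability about $1/2$ (at each step the conditional probabilities of ``hit $v$'' and ``exit $K$'' are comparable), and such a $K$ only pushes the spectral gap down to a constant of order $1/d^2$ by Cheeger, so for constant $d$ this is a legitimate expander. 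There the avoidance probability is at most $2^{-b_v}$, not $\ee^{-O(b_v/d)}$, and your Jensen step over $\sum_v b_v\leq kd$ then yields at best $k\ee^{-\Theta(d)}$ --- a ``constant'' depending on $d$, whereas the lemma demands constants depending only on $\mu_2(G)$, and $d$ is not bounded by any function of $\mu_2$. Your fallback hope, that \eqref{converge} or return-time estimates like \eqref{H-pi-Z} control the short-range contributions, cannot work either: those bounds give only $\sum_{t\geq1}P^t_{uv}\leq m/n+\mu_2/(1-\mu_2)$, and $\mu_2/(1-\mu_2)>1$ already at the paper's threshold $\mu_2=0.575$ (let alone for a general expander sequence), so no individual hitting probability is bounded away from $1$ by spectral information alone. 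Moreover blues at distance $2,3,\dots$ also carry constant-order hitting probabilities in the presence of local clustering, so adjacency counting is not the right budget; the quantity that must be summed is $\sum_b h_{b,v}$, and for adversarial configurations (blues packed against a dense region of red sites) its average over $v$ is not $O(1)$.

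What the paper supplies, and your proposal lacks, is precisely the pair of controls needed to run a product bound when individual $h_{b,v}$ may be large. Using Lemma \ref{lem:no-return}, it calls a site \emph{good} if a walk from it escapes the \emph{whole occupied set} with probability at least $1-q$, and expansion guarantees that a $2/3$ fraction of occupied sites are good. Then: (i) for each blue, at most one good red site can be its first-hit site with probability exceeding $1/2$, and every other good site is hit with probability at most $(1+q)/2<1$, because being hit other than first forces a return to the occupied set from a good site; (ii) each visit to a good site is, with probability at least $1-q$, the walk's last visit to the occupied set, so the expected number of good sites any blue ever hits is at most $(1-q)^{-1}$, whence most good reds satisfy $\sum_b h_{b,v}\leq 4(1-q)^{-1}$. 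These max- and sum-controls are exactly the hypotheses of Lemma \ref{bernoulli-avoidance}, which converts them into a constant per-site survival probability. Nothing in your argument plays the role of (i) or (ii); the ``main obstacle'' you flag at the end is not a technicality to be patched but is essentially the entire content of the proof, and the mechanism that resolves it is measured relative to the occupied set (via escape probabilities), not relative to the degree.
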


The proof of Lemma \ref{lem:slowreds} uses the following fact about independent Bernoulli random variables:
if the expected number of successes is bounded, but the probability of at least one success is high, then
there must be some individual trial that has a high probability of success.
\begin{lem}\label{bernoulli-avoidance}Fix $x\in(0,1)$ and let $p_1,p_2,\ldots$ be any sequence satisfying $0\leq p_i\leq x$
for each $i$ and $\sum_ip_i\leq y$. Then $\prod_i(1-p_i)\geq (1-x)^{y/x}$.
\end{lem}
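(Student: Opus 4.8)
The plan is to take logarithms and reduce the product inequality to a per-term comparison that follows from concavity. Taking logs, the claim $\prod_i(1-p_i)\geq(1-x)^{y/x}$ is equivalent to $\sum_i\log(1-p_i)\geq\frac{y}{x}\log(1-x)$, so after exponentiating at the end it suffices to establish this sum bound.

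The key step is the pointwise inequality $\log(1-p)\geq\frac{\log(1-x)}{x}\,p$ for all $p\in[0,x]$. To see this, I would observe that $p\mapsto\log(1-p)$ is concave on $[0,1)$, since its second derivative $-(1-p)^{-2}$ is negative. A concave function lies above any of its chords, and the right-hand side is precisely the linear interpolant through the endpoints $(0,0)$ and $(x,\log(1-x))$ of the interval $[0,x]$; hence $\log(1-p)$ dominates it there. Applying this with $p=p_i$ and summing over $i$ gives $\sum_i\log(1-p_i)\geq\frac{\log(1-x)}{x}\sum_i p_i$.

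It remains to feed in the hypothesis $\sum_i p_i\leq y$. Here the one point requiring care — and the only place I expect any subtlety — is the sign: since $\log(1-x)<0$ the coefficient $\frac{\log(1-x)}{x}$ is negative, so multiplying the bound $\sum_i p_i\leq y$ through by it reverses the inequality, yielding $\frac{\log(1-x)}{x}\sum_i p_i\geq\frac{y}{x}\log(1-x)$. Chaining this with the previous display and exponentiating completes the proof. Aside from tracking this sign flip, every step is elementary, so I would not anticipate a genuine obstacle; the whole argument is essentially the single observation that $\log(1-p)$ is concave.
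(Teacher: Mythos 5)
Your proof is correct, but it takes a genuinely different route from the paper's. You reduce everything to the single pointwise chord inequality $\log(1-p)\geq \frac{p}{x}\log(1-x)$ on $[0,x]$ (concavity of $\log(1-\cdot)$), sum it, and handle the sign flip correctly when invoking $\sum_i p_i\leq y$; applied to partial sums this bounds every partial product by $(1-x)^{y/x}$, so the infinite product is bounded as well. The paper instead runs an extremal argument: it reduces to finite sequences, uses compactness to pick a minimising configuration, and smooths it (showing any two interior values $0<p_i\leq p_j<x$ can be pushed apart to decrease the product) to conclude the minimiser has all terms equal to $0$ or $x$ except at most one, after which the bound follows from $1-\alpha x\geq(1-x)^{\alpha}$. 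Interestingly, that last inequality is exactly the exponentiated form of your chord inequality, so both proofs rest on the same concavity fact; yours applies it term by term and thereby avoids the compactness and smoothing machinery entirely, making it shorter and arguably cleaner, while the paper's approach has the side benefit of identifying the exact worst-case configuration ($p_i\in\{0,x\}$ up to one exceptional term), which shows the constant $(1-x)^{y/x}$ is essentially sharp.
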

\begin{proof}
It is sufficient to show for any fixed $\ell\geq y/x$ that $\prod_{i=1}^\ell(1-p_i) \geq (1-x)^{y/x}$.
Given $\ell$, by compactness, we can consider a sequence $p_1,p_2,\ldots$ that minimises  $\prod_{i=1}^\ell(1-p_i)$.
Clearly $\sum_{i=1}^\ell p_i=y$. Observe that if $0<p_i\leq p_j<x$ for some $i\neq j\in [\ell]$ then
we can increase the product by replacing $(p_i,p_j)$ by $(p_i-\delta, p_j+\delta)$.
Thus $p_1,p_2,\ldots$ can have at most one term not equal to $0$ or $x$.
Writing $\alpha = y/x-\floor{y/x}$ we can assume that $p_i=x$ for $i\leq \floor{y/x}$,
$p_{\floor{y/x}+1}=\alpha x$ and $p_i=0$ for $i>\floor{y/x}+1$.
Since $1-\alpha x \geq (1-x)^\alpha$ for $x,\alpha\in [0,1]$, the result follows.
\end{proof}

\begin{proof}[Proof of Lemma \ref{lem:slowreds}]
With high probability at most $k/\log n$ red movements occur in the next $n$ steps, so we may assume
at least $k-2k/\log n$ red particles start at different vertices and do not move. As in the proof for one-type annihilation,
it is sufficient to show that with constant probability a constant proportion of these survive the mixing phase.

The expansion condition implies that we may choose some constant $q$, depending only on the spectral gap,
such that at least $2/3$ of the occupied vertices are `good' in that a walker starting from that vertex 
will return to the occupied set within the mixing phase with probability at most $q$.
If a constant fraction of blues are in good spots, we are done, since this implies a constant fraction
of the blue particles, and hence a constant fraction of the red, will survive.
Otherwise almost all the blues are in bad spots, and there are at least twice as many good reds as
blue spots.

For each blue spot, there is at most one good red spot which has more than $1/2$ probability of being 
the first one hit. Each other good spot has at most probability $(1+q)/2$ of being hit at all -- since it has
at most probability $q$ of being hit given that it is not the first one hit. Moreover, the total over all
good red spots of the probability of that spot being hit is at most $(1-q)^{-1}$, since this is a bound on the 
expected number of times a particle hits a good spot.

In particular, there are at least $k/2$ good red spots such that no individual blue has more than $(1+q)/2$
probability of reaching that spot. Note that $(1-q)^{-1}$ is an upper bound on the expected number of good red
spots hit by any given blue particle. Thus at most $k/4$ good red spots will be hit by more than $4(1-q)^{-1}$
blue particles in expectation.

Fix a red spot, and let $(p_i)_{i=1}^k$ be the probabilities of each blue particle reaching that spot.
By the analysis above, there are at least $k/4$ red spots with the property that $\max_ip_i\leq (1+q)/2$ and
$\sum_ip_i\leq 4(1-q)^{-1}$. It follows from Lemma \ref{bernoulli-avoidance} that each such red has constant 
probability of avoiding being hit. Thus, by Markov's inequality, a constant fraction of red particles survive 
the mixing phase with constant probability.
\end{proof}

\subsubsection{Reds are not too slow}

Henceforth we can assume $p \ge n^{-2/3}$.
Let $A$ be the occupied set when $k=n^c$ particles of each colour remain, for $c$ in some suitable range.
Consider starting one representative lazy random walk per site. By the proof of Lemma \ref{lem:no-return},
the probability of a uniformly random representative non-trivially returning to $A$ 
before taking $T := \eps n/k$ steps is $p_{A,T} \leq 2\mu_2-1+o_\eps(1)$. 
We fix some parameter $q$ to be optimised later. 
We say that $v \in A$ is \emph{good} if $p_{v,A,T} \le 1-q$. 
Thus a good representative has probability at least $q$ of not returning  non-trivially to $A$ within time $T$.
As $p_{A,T} = \mathbb{E}_v (p_{v,A,T})$, at least $\sigma k - o(k)$ vertices are good,
where $(1-\sigma)(1-q) = 2\mu_2-1$. 

Next we consider an optimisation problem according to the distribution of colours at good vertices.
Suppose that $A$ has $x_i k$ sites of colour $i$, of which $\sigma_i k$ are good, 
for $i=0,1$, labelling `red' as $0$ and `blue' as $1$. 
Then $x_0 + x_1 = 1$ and $\sigma_0+\sigma_1 \ge \sigma - o(1)$,
so we can fix $i$ with $\sigma_i / x_{1-i} \ge \sigma$. 
We can assume $x_{1-i}=\Omega(1)$, as otherwise we can choose $1-i$ instead.
The easier case is $i=1$ (blue is good), so we will  consider the case $i=0$ (red is good).
Our next lemma is an adaptation of Lemma \ref{lem:meeting-vs-return}.
The intended application is that $v$ is a good red representative 
and $B$ is the set of blue representatives.
The conclusion is that if the reds are not too slow then we have the analogue
of the second part of our argument in the one-type case. We may in particular assume $p\geq n^{-2/3}$,
and so the condition below will easily hold provided $c \leq 1/4$.

\begin{lem}\label{lem:mr2}
Let $c,\eps>0$, $p \in (0,1)$ with $p > \eps^{-2} k/n$, 
$v \in V(G)$ and $B\subset V(G) \setminus \{v\}$ with $k=|B|=n^c$.
One walker starts at each vertex of $B \cup \{v\}$.
The walkers independently take lazy random walk steps on $G$,
at rate $p/k$ for $v$ and $(1-p)/k$ for $B$.
For each $w\in B$, let $E_{w}$ be the event that the walkers 
started from $v$ and $w$ collide within time $pkT$ with $T := \eps n/k$, 
with the former not having reached $B$ and the latter not having returned to $B$.
Let $X = \sum_{w \in B} 1_{E_w}$. Then $\mean{X}\leq (1+o_\eps(1))p_{v,B,3T}$.
\end{lem}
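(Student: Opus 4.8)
The plan is to follow the proof of Lemma~\ref{lem:meeting-vs-return}, adapting the poissonisation and trajectory-reversal to the two differing speeds. First I would pass to a poissonised model in which, during each time step, the walker at $v$ takes $\Po(p/k)$ lazy steps and each walker in $B$ takes $\Po((1-p)/k)$ lazy steps, all independently; the marginal law of the $v$-walk is unchanged. Recall $p_{v,B,3T}$ from Definition~\ref{pvBT}; as in the proof of Lemma~\ref{lem:meeting-vs-return}, let $E'_w$ be the event that the lazy walk from $v$ first meets the set $B$ at $w$ within $3T$ of its own steps, so that $p_{v,B,3T}=\sum_{w\in B}\prob{E'_w}$. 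It then suffices to prove $\prob{E_w}\le (1+o_\eps(1))\prob{E'_w}$ for each $w$, and (as in the one-type case) we may first refine $E_w$ by insisting that $v$ and $w$ coincide at the \emph{end} of a time step, which changes $\prob{E_w}$ only by a factor $1+O(1/k)$.

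The key role of the window $pkT$ is to control the \emph{combined} number of steps. Over $pkT=\eps pn$ time steps the walker $v$ takes about $p^2T$ steps and each $w$ takes about $(1-p)pT$ steps, so together they take about $pT\le T$ steps. Since $T=\eps n/k$ and the hypothesis $p>\eps^{-2}k/n$ gives $pT>\eps^{-1}$, this combined count is concentrated around $pT$ with relative error $o_\eps(1)$, and in particular is at most $3T$ except with probability $\ee^{-\Omega(T)}$. Consequently, whenever $E_w$ occurs I would concatenate the step-path of $v$ (from its start to the collision vertex $z$) with the reverse of the step-path of $w$ (from $z$ back to $w\in B$) to obtain a single lazy-walk step-path from $v$ that first reaches $B$ at $w$ in at most $3T$ steps. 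Because $G$ is regular the one-step matrix $\bm P$ is symmetric, so reversing $w$'s step-path preserves its probability; hence this concatenation is probability-preserving at the level of step-sequences, exactly as in Lemma~\ref{lem:meeting-vs-return}, and crucially the differing speeds affect only the \emph{number} of steps in the window, not the per-step transition law.

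The remaining work, and the main new ingredient, is the occupancy bookkeeping once the two escape rates differ. When $v$ and $w$ are co-located they leave at rates $\tfrac{p}{2k}$ and $\tfrac{1-p}{2k}$ respectively, so the joint escape rate is $\tfrac1{2k}$ and the co-location persists for mean dwell $2k$; I would define $Y$ to be the number of time steps both particles spend together in the first $E_w$-collision episode, giving $\mean{Y}=(1-o(1))\cdot 2k\cdot\prob{E_w}$. For the single $v$-walk at $w$ the escape rate is only $\tfrac{p}{2k}$, so the naive dwell is $\tfrac{2k}{p}$; to rebalance I would define $Y'$ to count only a $p$-fraction of these time steps (the analogue of the ``even steps'' device of Lemma~\ref{lem:meeting-vs-return}, which handled the symmetric factor $2$), so that $\mean{Y'}=(1-o(1))\cdot 2k\cdot\prob{E'_w}$. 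The probability-preserving reversal of the previous paragraph, applied trajectory by trajectory, then yields $\mean{Y}\le\mean{Y'}$, whence $\prob{E_w}\le(1+o_\eps(1))\prob{E'_w}$; summing over $w\in B$ gives $\mean{X}\le(1+o_\eps(1))p_{v,B,3T}$.

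The step I expect to be the main obstacle is making this asymmetric dwell rescaling compatible with the reversal: I must arrange the $p$-fraction thinning of $Y'$ so that the image of each co-located time step under the reversal lands among the counted steps (just as the doubling $t\mapsto 2t$ lands among the even steps in the symmetric case). This forces the reversal to carry an accompanying time-rescaling by $1/p$, and verifying that this is consistent with the Poisson step-count concentration — so that all the stray factors collapse into a single $1+o_\eps(1)$ — is the delicate part. The hypothesis $p>\eps^{-2}k/n$, equivalently $pT>\eps^{-1}$, is exactly what drives these error terms to $o_\eps(1)$, and is also what rules out the very slow regime $p\le n^{-2/3}$ already disposed of in Lemma~\ref{lem:slowreds}.
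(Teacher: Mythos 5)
Your overall strategy coincides with the paper's: reduce to $\prob{E_w}\le(1+o_\eps(1))\prob{E'_w}$ for each $w$, compare via occupancy variables ($Y$ = co-location dwell with mean $2k$ from the combined escape rate $\frac{1}{2k}$; a dwell at $w$ for the single walk with mean $\frac{2k}{p}$ from escape rate $\frac{p}{2k}$), and transfer mass by a trajectory reversal carrying an overall time rescaling by $1/p$; your use of $pT>\eps^{-1}$ and the combined step count $\approx pT\le 3T$ also matches the paper. The one structural difference is that at precisely this point the paper \emph{abandons} the per-time-step poissonisation (``instead of discretising time'') and works in genuine continuous time, defining $Y$ on $[0,1.5pkT]$, $Y'$ on $[0,3kT]$, and reversing $S'$ ``with a time change factor $\frac{1-p}{p}$'', so that an interval $(t,t+dt)$ counted in $Y$ maps to an interval of length $p^{-1}dt$ counted in $Y'$ and $p^{-1}\mean{Y}\le\mean{Y'}$ follows by integration.

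The genuine gap is the step you yourself flag as ``the delicate part'': it is the heart of the lemma, not a verification that can be deferred. Your second paragraph claims that, since $\bm P$ is symmetric, ``the concatenation is probability-preserving at the level of step-sequences'' and that ``the differing speeds affect only the number of steps in the window, not the per-step transition law''. The first statement is true but insufficient, and the second is false for the objects you must compare: $E_w$, $Y$ and $Y'$ are functionals of \emph{timed} trajectories (a collision is a simultaneity constraint), and a timed path with $m$ moves executed at rate $\frac{1-p}{k}$ has probability differing from the same path executed at rate $\frac{p}{k}$ by a factor of order $\bigl(\frac{p}{1-p}\bigr)^m$ --- an enormous discrepancy, not a $1+o_\eps(1)$ one. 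What closes the argument in the paper is that dilating the reversed path's event times by $\frac{1-p}{p}$ converts, in continuous time, the rate-$\frac{1-p}{k}$ density exactly into the rate-$\frac{p}{k}$ density (the change-of-variables factor on the $m$ event times cancels the rate discrepancy), yielding the exact identity $\prob{S''}=\prob{S \& S'}$ together with the $p^{-1}dt$ interval bookkeeping. In your discrete framework this mechanism is unavailable: $t\mapsto t/p$ does not send time steps to time steps, so there is no analogue of the ``even steps'' device (which works only because doubling maps integers to even integers), and a ``$p$-fraction thinning'' of counted dwell steps cannot be paired injectively and measure-preservingly with co-located steps without essentially reconstructing the continuous-time computation (or an equivalent combinatorial count of move placements over windows of lengths in ratio $\frac{1-p}{p}$, which is where the missing $\bigl(\frac{1-p}{p}\bigr)^m$ factor would come from). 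As written, the inequality $\mean{Y}\le\mean{Y'}$ is exactly what remains unproven; the clean fix is to pass to continuous time as the paper does.
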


\begin{proof}
As in Lemma \ref{lem:meeting-vs-return}, it suffices to show
$\prob{E_w}\leq (1+o(1))\prob{E'_w}$ for each $w \in B$,
where $E'_w$ is the event that the particle started at $v$ reaches $B$ within $3T$ steps, 
with $w$ being the first vertex of $B$ it reaches.
Instead of discretising time, we relate $E_w$ to the variable $Y$, 
defined by considering continuous time up to time $1.5pkT$, 
setting $Y=0$ if there is no collision satisfying $E_w$, or otherwise considering the first such collision
and letting $Y$ be the length of time that both particles remain at the collision site.
Then $\mean{Y}=(1-o_\eps(1))2k\prob{E_w}$, as $pkT > \eps^{-1} k$
and the rate of either particle leaving is $(1/2)(p/k + (1-p)/k)=1/2k$.
Similarly, we define $Y'$ with $\mean{Y'}=(1-o(1)) 2p^{-1} k \prob{E'_w}$  
by considering  continuous time up to time $3kT = 3\eps n$, 
setting $Y'=0$ if the walker started at $v$ does not first hit $B$ at $w$,
or otherwise letting $Y'$ be the time that the walker remains at $w$ after this first hit.

Now consider any trajectories $S$, $S'$ for the particles started at $v$, $w$
that lead to some time segment $(t,t+dt)$ being counted in the random variable $Y$.
Let $S''$ range over the trajectories obtained by following $S$ to some time in $(t,t+dt)$
and then the reverse of $S'$ with a time change factor $\frac{1-p}{p}$.
Then $\prob{S''}=\prob{S \& S'}$ and $S''$ leads to counting $p^{-1} (t,t+dt)$ in $Y'$.
Integrating over such trajectories we deduce $p^{-1}\mean{Y}\leq\mean{Y'}$,
and so $\prob{E_w}\leq (1+o(1))\prob{E'_w}$, as required.
\end{proof}

For $p \ge n^{-2/3}$ and $c \le 1/4$ we can apply Lemma \ref{lem:mr2},
so each good red representative $v$ has probability at least $2q-1$ of not returning to $A$
and not hitting a blue representative that has not returned to the blue set $B$.

It remains to control meetings with the set $B'$ of blue representatives that have returned to $B$.
By Lemma \ref{lem:no-return} applied to $B$, we have $\mean{B'} \le (2\mu_2-1+o_\eps(1))|B|$.
For each $w \in B$, similarly to the proof of Lemma \ref{lem:mr2} we can bound the expected number
of meetings of the $w$-representative with the $v$-representative by the expected number of visits
by the $w$-representative to $v$, which is at most $1/q$ as $v$ is good. 
Recalling that $\sigma_0 / x_1 \ge \sigma$, 
the expected number of $v$ not meeting such $w$ is at least
\[ (2q-1)\sigma_0 k -  (2\mu_2-1+o_\eps(1)) x_1 k/q 
\ge \sigma_0 k \left( 2q-1 - \frac{2\mu_2-1+o_\eps(1)}{q\sigma} \right) = \Omega(k), \]
provided that we can choose $q$ so that 
\[ (2q-1)q\sigma  > 2\mu_2 - 1, \text{ where } (1-\sigma)(1-q) = 2\mu_2-1. \]
Some calculations (we omit the details) show that this is possible if $\mu_2 < 0.575$.

We therefore expect some positive proportion $\eta$ of the good red representatives 
to avoid collisions with blue representatives. We say that a good red representative
is \emph{excellent} if has probability at least $\eta/2$ of avoiding such collisions.
Then at least $\eta/2$ proportion of the good red representatives are excellent.

Now we consider the multiplicities of true particles for each representative;
with high probability these are at most $8\log n/\log\log n$ by Lemma \ref{lem:poisson2}.
We lower bound the survival probability of each excellent red representative
by considering $8\log n/\log\log n$ independent samples of the blue representatives.
The survival probability is thus at least $\exp(-\Omega(\log n/\log\log n))=n^{-o(1)}$,
so we expect $n^{c-o(1)}$ particles to survive the mixing phase.
If they do, they are a subset of $n^c$ mixed particles, and with high probability 
no ten (say) of these particles occupy the same vertex in the next $n\log n$ steps. 
Thus we may thereafter reapply the same analysis revealing only ten samples of blue trajectories,
which gives constant probability of a constant fraction surviving the mixing phase,
as required to complete the proof as in the case of one-type annihilation. 

It remains to convert the above expected survival of $n^{c-o(1)}$ particles 
to a high probability statement. We need to bound the probability that too many
red representatives are destroyed, given that each is destroyed with probability
at most $1-\alpha$ with $\alpha = n^{-o(1)}$ by the samples of the blue representatives.
For a given set of red representatives to be destroyed, it must be possible to assign them 
to meets with distinct blue representatives (possibly from the same vertex in different samples).
This corresponds to events occurring \emph{disjointly} in the sense of Reimer's inequality  \cite{Rei00}, 
\ie each annihilation event is determined by the outcomes of some set of coordinates in the total
product probability space such that all these sets are disjoint for different annihilation events.
Thus the probability of destroying a given set of $t$ red representatives is at most $(1-\alpha)^t$,
so the number destroyed is stochastically dominated by a binomial with mean $(1-\alpha)n^c$. 
By Chernoff, with high probability $n^{c-o(1)}$ particles survive, as required.

\section*{Acknowledgements}
We are grateful to an anonymous referee for helpful comments on the exposition. Both authors were supported by ERC Advanced Grant 883810.

\end{document}